\documentclass[11pt]{article}

\usepackage{geometry}
\usepackage{amsthm}
\usepackage{amsmath}
\usepackage{amssymb}
\usepackage{mathtools}
\usepackage{mathrsfs}
\usepackage{subfigure}
\usepackage{multicol}
\usepackage{tikz}

\newcommand{\bb}{\mathbb}
\newcommand{\mc}{\mathcal}
\newcommand{\ms}{\mathscr}
\newcommand{\abs}[1]{\lvert #1 \rvert}

\newcommand{\minus}{\,\backslash\,}

\DeclareMathOperator{\conv}{conv}
\DeclareMathOperator{\link}{link}
\DeclareMathOperator{\type}{type}

\theoremstyle{definition}

\newtheorem{thm}{Theorem}[section]
\newtheorem{prop}[thm]{Proposition}
\newtheorem{lem}[thm]{Lemma}

\newtheorem{defn}[thm]{Definition}
\newtheorem{exam}[thm]{Example}

\numberwithin{equation}{section}

\title{A zonotope and a product of two simplices with disconnected flip graphs}
\author{Gaku Liu}

\begin{document}
\maketitle

\begin{abstract}
We give an example of a three-dimensional zonotope whose set of tight zonotopal tilings is not connected by flips. Using this, we show that the set of triangulations of $\Delta^4 \times \Delta^n$ is not connected by flips for large $n$. Our proof makes use of a non-explicit probabilistic construction.
\end{abstract}

\section{Introduction}

We consider the poset $\mc P$ of polyhedral subdivisions of a polytope $P$ or zonotopal tilings of a zonotope $Z$, ordered by refinement. This poset is called the \emph{Baues poset} of $P$ or $Z$. The minimal elements of this poset are, respectively, the triangulations of $P$ or the tight zonotopal tilings of $Z$. Two minimal elements of $\mc P$ are connected by a \emph{flip} if there is an element of $\mc P$ whose only proper refinements are these two minimal elements. The \emph{flip graph} of $P$ or $Z$ is the graph whose vertices are minimal elements of $\mc P$ and whose edges are flips.

For zonotopal tilings, the flip graph is known to be connected for cyclic zonotopes \cite{Zie93} or if the zonotope has dimension two \cite{Ken93}. Our first result is the first example, to the author's knowledge, of a zonotope whose flip graph is not connected. This answers a question of Reiner in \cite{Rei99}. Our example is a three-dimensional permutohedron with many copies of each of its generating vectors. The number of copies of each vector is determined by a probabilistic argument to be around 100.

Using a related construction, we also show that the flip graph of triangulations of the product of two simplices is not generally connected. Santos \cite{San00} gave an example of a polytope whose flip graph of triangulations is not connected. However, the case when $P$ is product of two simplices remained of special interest due to the appearances of these triangulations in various branches of mathematics; see \cite[Chapter 6.2]{DRS} for an overview. Santos \cite{San05} proved that the flip graph of $\Delta^2 \times \Delta^n$ is connected for all $n$, and the author \cite{Liu16} proved that the flip graph of $\Delta^3 \times \Delta^n$ is connected for all $n$. However, we show that the flip graph of $\Delta^4 \times \Delta^n$ is not connected for $n \approx 4 \cdot 10^4$.

Connectivity of flip graphs is related to the \emph{generalized Baues problem}, which concerns the topology of the poset $\mc P$. Specifically, the problem asks if the order complex of $\mc P$ minus its maximal element is homotopy equivalent to a sphere. The question was formulated by Billera et al.\ in \cite{BKS94} and is motivated by the fact that when one restricts $\mc P$ to \emph{coherent} subdivisons or zonotopal tilings, the resulting poset is isomorphic to the face lattice of a certain polytope, called the \emph{fiber polytope} \cite{BS92}. See \cite{Rei99} for an overview.

In general, the answers to the generalized Baues problem and the question of connectivity of flip graphs do not imply each other. However, there are situations where the answer to one can be used to answer the other. The generalized Baues problem for triangulations was answered in the negative by Santos \cite{San06} by using a point set in general position with disconnected flip graph. The problem for zonotopal tilings remains open.

The paper is organized as follows. Section 2 reviews triangulations and the product of two simplices. Section 3 reviews mixed subdivisions, zonotopal tilings, and the Cayley trick. Section 4 constructs our zonotope and proves that its flip graph is not connected. Section 5 proves that the flip graph of $\Delta^4 \times \Delta^n$ is not connected. Section 6 is an appendix proving several propositions used in the paper.

\section{Triangulations and the product of two simplices}

We begin with a quick overview of triangulations, flips, and the product of two simplices. We refer to De Loera et al.\ \cite{DRS} for a more comprehensive treatment.

\subsection{Subdivisions and triangulations}

Throughout this section, let $A \subset \bb R^m$ be a finite set of points. A \emph{cell} of $A$ is a subset of $A$. A \emph{simplex} is a cell which is affinely independent. A \emph{face} of a cell $C$ is a subset $F \subseteq C$ such that there exists a linear functional $\phi \in (\bb R)^m$ such that $F$ is the set of all points which minimize $\phi$ on $C$. For any cell $C$, let $\conv(C)$ denote the convex hull of $C$.

\begin{defn}
A \emph{polyhedral subdivision}, or \emph{subdivision}, of $A$ is a collection $\ms S$ of cells of $A$ such that
\begin{enumerate}
\item If $C \in \ms S$ and $F$ is a face of $C$, then $F \in \ms S$.
\item If $C$, $C' \in \ms S$, then $\conv(C) \cap \conv(C') = \conv(F)$ where $F$ is a face of $C$ and $C'$.
\item $\bigcup_{C \in \ms S} \conv(C) = \conv(A)$.
\end{enumerate}
The subdivision $\{A\}$ is the \emph{trivial subdivision}. A subdivision all of whose elements are simplices is a \emph{triangulation}.
\end{defn}

If $\ms S$ is a subdivision of $A$ and $F$ is a face of $A$, then $\ms S$ induces a subdivision $\ms S[F]$ of $F$ by
\[
\ms S[F] := \{ C \in \ms S : C \subseteq F \}. 
\]

For subdivisions $\ms S$, $\ms S'$, we say that $\ms S$ is a \emph{refinement} of $\ms S'$ if every element of $\ms S$ is a subset of an element of $\ms S'$. Refinement gives a poset structure on the set of all subdivisions of $A$. The maximal element of this poset is the trivial subdivision and the minimal elements are the triangulations.

\subsection{Flips}

As stated in the introduction, two triangulations are connected by a flip if there is a subdivision whose only proper refinements are these two triangulations. We will now give an equivalent definition of a flip which will be easier to use.

A \emph{circuit} is a minimal affinely dependent subset of $\bb R^d$. If $X = \{x_1,\dotsc,x_k\}$ is a circuit, then the elements of $X$ satisfy an affine dependence equation
\[
\sum_{i=1}^k \lambda_i x_i = 0
\]
where $\lambda_i \in \bb R \minus \{0\}$ for all $i$, $\sum_i \lambda_i = 0$, and the equation is unique up to multiplication by a constant. This gives a unique partition $X = X^+ \cup X^-$ of $X$ given by $X^+ = \{x_i : \lambda_i > 0\}$ and $X^- = \{x_i : \lambda_i < 0\}$. We will write $X = (X^+, X^-)$ to denote a choice of which part we call $X^+$ and which we call $X^-$.

A circuit $X = (X^+, X^-)$ has exactly two non-trivial subdivisions, which are the following triangulations:
\[
\ms T_X^+ := \{ \sigma \subseteq X : \sigma \not\supseteq X^+ \} \qquad \ms T_X^- := \{ \sigma \subseteq X : \sigma \not\supseteq X^- \}.
\]

Given a subdivision $\ms S$ and a cell $C \in \ms S$, we define the \emph{link} of $C$ in $\ms S$ as
\[
\link_{\ms S}(C) := \{ C' \in \ms S : C \cap C' = \emptyset,\, C \cup C' \in \ms S \}.
\]
We can now state the definition of a flip, in the form of a proposition.

\begin{prop}[Santos \cite{San02}] \label{flipdef}
Let $\ms T$ be a triangulation of $A$. Suppose there is a circuit $X = (X^+,X^-)$ contained in $A$ such that
\begin{enumerate}
\item $\ms T_X^+ \subseteq \ms T$.
\item All maximal simplices of $\ms T_X^+$ have the same link $\ms L$ in $\ms T$.
\end{enumerate}
Then the collection
\[
\ms T' := \ms T \minus \{ \rho \cup \sigma : \rho \in \ms L, \sigma \in \ms T_X^+ \} \cup \{ \rho \cup \sigma : \rho \in \ms L, \sigma \in \ms T_X^- \}
\]
is a triangulation of $A$. We say that $\ms T$ has a \emph{flip} supported on $(X^+,X^-)$, and that $\ms T'$ is the result of applying this flip to $\ms T$.
\end{prop}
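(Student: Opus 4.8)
The plan is to verify directly that $\ms T'$ satisfies the three axioms defining a subdivision of $A$; since $\ms T'$ manifestly consists of simplices, this shows it is a triangulation. Write $\ms R := \{\rho \cup \sigma : \rho \in \ms L,\ \sigma \in \ms T_X^+\}$ and $\ms N := \{\rho \cup \sigma : \rho \in \ms L,\ \sigma \in \ms T_X^-\}$, so $\ms T' = (\ms T \minus \ms R) \cup \ms N$. First I would record the elementary geometry: $\ms T_X^+$ and $\ms T_X^-$ are the two triangulations of the polytope $\conv(X)$, with maximal simplices $X \minus \{x\}$ ($x \in X^+$), respectively $X \minus \{y\}$ ($y \in X^-$), and they induce the \emph{same} triangulation $\ms T_X^+ \cap \ms T_X^-$ on the boundary complex of $\conv(X)$. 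The link $\ms L$ is a subcomplex of $\ms T$; each $\rho \in \ms L$ is disjoint from $X$ (otherwise $(X \minus \{x\}) \cup \rho \supseteq X$ would be affinely dependent), and $\operatorname{aff}(\rho)$ is in general position with $\operatorname{aff}(X)$ because $(X\minus\{x\})\cup\rho$ is a simplex while $X \minus \{x\}$ spans $\operatorname{aff}(X)$. It follows that $\sigma \cup \rho$ is a simplex for every $\sigma \subsetneq X$ and $\rho \in \ms L$; in particular $\ms R \subseteq \ms T$ (each $(X \minus \{x\}) \cup \rho$ lies in $\ms T$ by the link hypothesis, then pass to faces), and both $\ms R$ and $\ms N$ are subcomplexes, since any face $F$ of $\sigma \cup \rho$ decomposes canonically as $(F \cap X) \cup (F \minus X)$ with $F \cap X \subseteq \sigma$ and $F \minus X \subseteq \rho$. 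The covering axiom is then immediate: joining $\ms T_X^{\pm}$ with a fixed $\rho$ covers $\conv(X \cup \rho)$, so $\bigcup_{C \in \ms R} \conv(C) = \bigcup_{\rho \in \ms L} \conv(X \cup \rho) = \bigcup_{C \in \ms N} \conv(C)$, and since $\ms R \subseteq \ms T$, replacing $\ms R$ by $\ms N$ leaves $\bigcup_{C \in \ms T'} \conv(C) = \conv(A)$ unchanged.

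The crux is a local structure lemma: \emph{every $C \in \ms T$ with $X^- \subseteq C$ belongs to $\ms R$} (equivalently, $C \minus X \in \ms L$, the complementary condition $C \cap X \not\supseteq X^+$ being automatic since $C$ is a simplex and $X$ is a circuit). Let $p_X$ be the point in $\operatorname{relint}\conv(X^+) \cap \operatorname{relint}\conv(X^-)$ determined by the affine dependence on $X$; then $p_X \in \conv(C)$, and $p_X$ lies in the relative interior of a unique face $G$ of the triangulation $\ms T_X^+ \subseteq \ms T$ of $\conv(X)$. Two mechanisms drive the argument: the proper-intersection axiom applied to the cells $C$ and $G$ of $\ms T$ forces $G$ to be a face of $C$, hence $G \subseteq C$; and uniqueness of affine dependences among proper subsets of the circuit $X$ forces $X^- \subseteq G$. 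Thus when $G$ happens to be a maximal simplex $X \minus \{x_0\}$ of $\ms T_X^+$ one is done, since then $(X \minus \{x_0\}) \cup (C \minus X) = G \cup (C \minus X) \subseteq C \in \ms T$ gives $C \minus X \in \link_\ms T(X \minus \{x_0\}) = \ms L$. The remaining case, where $p_X$ lies on a proper face of $\ms T_X^+$, is where the hypothesis that \emph{all} maximal simplices of $\ms T_X^+$ share the single link $\ms L$ is essential: it is exactly what rigidifies the link of $\conv(G)$ in $\ms T$ enough to push $C$ into $\ms R$ (for instance it forces the sphere $\link_{\ms T_X^+}(G) = \partial\Delta^{X^+ \minus G}$ to extend inside $\link_\ms T(G)$ in a controlled way). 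I expect this step, together with the attendant bookkeeping along the boundary of the affected region, to be the main obstacle.

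Granting the lemma, I would conclude through the coarsening $\ms S$ of $\ms T$ obtained by replacing the subcomplex $\ms R$ with all faces of the cells $X \cup \rho$, $\rho$ a maximal simplex of $\ms L$. One checks that $\ms S$ is a subdivision of $A$: covering is as above; closure under faces uses the lemma (a face of a cell of $\ms T \minus \ms R$ lying in $\ms R$ must have $X$-part not containing $X^-$, hence be a face of some $X \cup \rho$); and proper intersection is checked case by case, the one nontrivial case --- a simplex $C$ of $\ms T \minus \ms R$ meeting a cell $X \cup \rho$ --- reducing to a single common face via the lemma, the identity $\bigcup_{x \in X^+}\conv((X\minus\{x\})\cup\rho) = \conv(X\cup\rho)$, the fact that $X^+ \notin \ms T$ (itself a consequence of proper intersection with $G$), and proper intersections already known in $\ms T$. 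The subdivision $\ms S$ is refined cell by cell both by $\ms T$ (via $\ms R = \ms T_X^+ * \ms L$) and by $\ms T'$ (via $\ms N = \ms T_X^- * \ms L$); since $\ms T_X^+$ and $\ms T_X^-$ agree on $\partial\conv(X)$, these refinements of the cells of $\ms S$ agree on shared faces, and the standard fact that gluing compatible triangulations of the cells of a subdivision yields a triangulation then gives that $\ms T'$ is a triangulation of $A$. (Beyond the literal statement, $\ms S$ is the element of the Baues poset witnessing that this flip connects $\ms T$ and $\ms T'$.)
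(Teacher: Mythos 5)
The paper itself gives no proof of this proposition --- it is imported from Santos \cite{San02} --- so your argument has to stand on its own. Its overall architecture is the standard and correct one: pass to the coarser subdivision $\ms S$ obtained by replacing $\ms R = \ms T_X^+ \ast \ms L$ with the cells $X \cup \rho$, verify that $\ms S$ is a subdivision, and then refine its cells by $\ms T_X^- \ast \ms L$, using that $\ms T_X^+$ and $\ms T_X^-$ agree on the boundary of $\conv(X)$. The preparatory observations are also fine: elements of $\ms L$ are disjoint from $X$, $\operatorname{aff}(\rho)$ is in joined position with $\operatorname{aff}(X)$, $\ms R \subseteq \ms T$, $X^+ \notin \ms T$, and the covering identity.

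The genuine gap is the ``local structure lemma'' --- every $C \in \ms T$ with $X^- \subseteq C$ lies in $\ms R$ --- which you yourself flag as the expected main obstacle and never prove. Worse, your case split collapses: since $X^- \in \ms T_X^+$ and $p_X \in \operatorname{relint}\conv(X^-)$, the unique cell $G \in \ms T_X^+$ with $p_X$ in the relative interior of its hull is $G = X^-$ itself, so your ``easy case'' ($G$ a maximal simplex of $\ms T_X^+$) occurs only when $\abs{X^+} = 1$; whenever $\abs{X^+} \ge 2$ you are in the unproven case, and there your two mechanisms only return $X^- \subseteq C$, which is the hypothesis, not progress. This lemma is exactly where the assumption that \emph{all} maximal simplices of $\ms T_X^+$ share the same link must be used (without it the statement is false), and it needs a real argument --- for instance: the star of $\tau_x := X \minus \{x\}$ in $\ms T$ is precisely $\{\tau_x \cup \rho : \rho \in \ms L\}$, so $W := \bigcup_{\rho} \conv(X \cup \rho)$ covers a neighborhood in $\conv(A)$ of every point of $\operatorname{relint}\conv(\tau_x)$; one must then propagate that coverage to a neighborhood of $p_X$, or equivalently argue by strong connectedness of the full-dimensional cells of $\ms T$ around $\conv(X^-)$ that membership in $\ms R$ persists when crossing facets containing $X^-$, before the step ``a cell of $\ms T$ meeting the interior of the full-dimensional simplex $\conv(\tau_x \cup \rho)$ must equal it'' can be invoked. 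Since everything downstream (face-closure of $\ms S$, the proper-intersection analysis, the final gluing) leans on this lemma, what you have is a correct outline with its central step missing rather than a proof.
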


\subsection{Regular subdivisions}

Let $A \subseteq \bb R^m$ be as before. Let $\omega : A \to \bb R$ be any function. For a cell $C \subseteq A$, we define the \emph{lift} of $C$ to be the set $C^\omega \subset \bb R^m \times \bb R$ given by
\[
C^\omega := \{ (x, \omega(x)) : x \in C \}.
\]
We call a subset $F \subseteq A^\omega$ a \emph{lower face} of $A^\omega$ if either $F$ is empty or there is a linear functional $\phi \in (\bb R^m \times \bb R)^\ast$ such that $\phi(0,1) > 0$ and $F$ is the set of all points which minimize $\phi$ on $A^\omega$. Then the collection of all $C \subseteq A$ such that $C^\omega$ is a lower face of $A^\omega$ is a subdivision of $A$. We call this the \emph{regular} subdivision of $A$ with respect to $\omega$, and denote it by $\ms S_A^\omega$.

If $F$ is a face of $A$, then $\ms S_A^\omega[F] = \ms S_F^{\omega \vert_F}$.

Both triangulations of a circuit are regular, as stated below. 
\begin{prop} \label{circuittiling}
Suppose $\sum_{i=1}^k \lambda_i x_i = 0$ is the affine dependence equation for a circuit $X = (X^+,X^-)$ with $X^+ = \{x_i : \lambda_i > 0\}$ and $X^- = \{x_i : \lambda_i < 0\}$. Let $\omega : X \to \bb R$ be a function. Then
\[
\ms S_X^\omega = 
\begin{dcases*}
\ms T_X^+ & if \ $\sum_{i=1}^k \lambda_i \omega(x_i) > 0$ \\
\ms T_X^- & if \ $\sum_{i=1}^k \lambda_i \omega(x_i) < 0$.
\end{dcases*}
\]
\end{prop}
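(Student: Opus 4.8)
The plan is to first show that the regular subdivision $\ms S_X^\omega$ is non-trivial, and then to decide which of the two non-trivial subdivisions of $X$ it is. Write a linear functional $\phi \in (\bb R^m \times \bb R)^\ast$ as $\phi(x,t) = \langle c, x \rangle + dt$ with $c \in \bb R^m$, so that the requirement $\phi(0,1) > 0$ in the definition of a lower face reads $d > 0$. The single computation behind everything is that, for any such $\phi$,
\[
\sum_{i=1}^k \lambda_i \, \phi\bigl(x_i, \omega(x_i)\bigr) = \Bigl\langle c,\, \sum_{i=1}^k \lambda_i x_i \Bigr\rangle + d \sum_{i=1}^k \lambda_i \omega(x_i) = d \sum_{i=1}^k \lambda_i \omega(x_i),
\]
using $\sum_i \lambda_i x_i = 0$; recall also that $\sum_i \lambda_i = 0$.

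\emph{Step 1: $\ms S_X^\omega$ is non-trivial.} As noted above, the only subdivisions of the circuit $X$ are the trivial one, $\ms T_X^+$, and $\ms T_X^-$; among these only the trivial subdivision has $X$ itself as a cell. So it is enough to show that $X^\omega$ is not a lower face of $X^\omega$. If it were, some $\phi$ with $d > 0$ would be constant on $X^\omega$, say with value $v$; then the displayed identity gives $d\sum_i \lambda_i \omega(x_i) = \sum_i \lambda_i v = v\sum_i \lambda_i = 0$, forcing $\sum_i \lambda_i\omega(x_i) = 0$ since $d \neq 0$, contrary to hypothesis. Hence $\ms S_X^\omega \in \{\ms T_X^+, \ms T_X^-\}$.

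\emph{Step 2: the two cases.} Assume $\sum_i \lambda_i\omega(x_i) > 0$; the case $\sum_i\lambda_i\omega(x_i) < 0$ is obtained by replacing the affine dependence equation by its negative, which interchanges $X^+$ and $X^-$. Since $X^+$ and $X^-$ are disjoint and non-empty, $X^+ \in \ms T_X^-$ while $X^+ \notin \ms T_X^+$, so by Step 1 it suffices to show that $X^+$ is not a cell of $\ms S_X^\omega$, i.e.\ that $(X^+)^\omega$ is not a lower face of $X^\omega$. Suppose it were, witnessed by a $\phi$ with $d > 0$ whose minimum value $v$ on $X^\omega$ is attained exactly on $(X^+)^\omega$. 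Then $\phi(x_i, \omega(x_i)) = v$ for $x_i \in X^+$ (where $\lambda_i > 0$) and $\phi(x_i, \omega(x_i)) > v$ for $x_i \in X^-$ (where $\lambda_i < 0$); multiplying by $\lambda_i$, each term with $\lambda_i > 0$ equals $\lambda_i v$ and each term with $\lambda_i < 0$ is strictly below $\lambda_i v$, so, as $X^-$ is non-empty, $\sum_i \lambda_i\,\phi(x_i,\omega(x_i)) < v\sum_i\lambda_i = 0$. But the displayed identity gives $\sum_i \lambda_i\,\phi(x_i,\omega(x_i)) = d\sum_i\lambda_i\omega(x_i) > 0$, a contradiction. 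Therefore $\ms S_X^\omega = \ms T_X^+$.

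The argument rests entirely on the identity in the first paragraph, so I do not expect a real obstacle. The step requiring the most care is Step 2, where one must track how the inequalities reverse upon multiplying by the negative coefficients $\lambda_i$, together with the small but necessary observations that $X^+$ and $X^-$ are non-empty and that a lower face may equal all of $X^\omega$---the latter being exactly what turns Step 1 into another instance of the same identity.
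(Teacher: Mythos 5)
Your proof is correct. The paper states Proposition~\ref{circuittiling} without proof (it is quoted as a standard fact about regular subdivisions of a circuit), and your argument---pairing the affine dependence relation with the lifting functional via the identity $\sum_i \lambda_i \phi(x_i,\omega(x_i)) = d\sum_i \lambda_i\omega(x_i)$, then excluding first the trivial subdivision and then the wrong triangulation by testing whether $X$ and $X^+$ can be lower faces---is exactly the standard computation the paper implicitly relies on, with the sign bookkeeping and the non-emptiness of $X^+$, $X^-$ handled correctly.
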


\subsection{The product of two simplices}

We now consider $\Delta^{m-1} \times \Delta^{n-1}$, the product of two simplices of dimensions $m-1$ and $n-1$. Following the conventions of the previous section, we will understand $\Delta^{m-1} \times \Delta^{n-1}$ to mean the set of vertices of $\Delta^{m-1} \times \Delta^{n-1}$ rather than the polytope itself.

Let $\Delta^{m-1} := \{e_1, \dotsc, e_m\}$ be the standard basis for $\bb R^m$ and $\Delta^{n-1} := \{f_1, \dotsc, f_n\}$ be the standard basis for $\bb R^n$. We embed $\Delta^{m-1} \times \Delta^{n-1}$ in $\bb R^m \times \bb R^n$ by
\[
\Delta^{m-1} \times \Delta^{n-1} := \{ (e_i, f_j) : i \in [m], j \in [n] \}.
\]

Let $G := K_{m,n}$ be the complete bipartite graph with vertex set $\Delta^{m-1} \cup \Delta^{n-1}$ and edge set $\{ e_if_j : i \in [m], j \in [n]\}$. We have a bijection $(e_i,f_j) \mapsto e_if_j$ between $\Delta^{m-1} \cup \Delta^{n-1}$ and the edge set of $G$. For each cell $C \subseteq \Delta^{m-1} \times \Delta^{n-1}$, let $G(C)$ be the minimal subgraph of $G$ with edge set $\{ e_if_j : (e_i,f_j) \in C\}$. Then $C$ is a simplex if and only if $G(C)$ is acyclic, and $C$ is a circuit if and only if $G(C)$ is a cycle. If $C$ is a circuit, then $G(C)$ alternates between edges corresponding to positive and negative elements of the circuit.

\section{Zonotopal tilings}

We now define zonotopal tilings in terms of mixed subdivisions. We review the Cayley trick which shows that mixed subdivisions are special cases of subdivisions. The information in this section was developed in \cite{Stu94}, \cite{HRS00}, and \cite{San05}.

\subsection{Mixed subdivisions} \label{sec:mixedsub}

Let $A_1$, \dots, $A_n$ be finite subsets of $\bb R^m$. The \emph{Minkowski sum} of $A_1$, \dots, $A_n$ is the set of points
\[
\sum A_i = A_1 + \dotsb + A_n := \{ x_1 + \dotsb + x_n : x_i \in A_i \text{ for all } i \}.
\]
In this paper, we want the phrase ``$\sum A_i$'' to identify a set of points but also retain the information of what $A_1$, \dots, $A_n$ are. In other words, $\sum A_i$ will formally mean an ordered tuple $(A_1,\dots,A_n)$ but by abuse of notation will also refer to the Minkowski sum.

A \emph{mixed cell} of $\sum A_i$ is a set $\sum B_i$ where $B_i$ is a cell of $A_i$ for all $i$. A mixed cell is \emph{fine} if all the $B_i$ are simplices and lie in independent affine subspaces. A \emph{face} of a mixed cell $\sum B_i$ is a mixed cell $\sum F_i$ of $\sum B_i$ such that there exists a linear functional $\phi \in (\bb R^m)^\ast$ such that for all $i$, either $F_i = \emptyset$ or $F_i$ is the set of all points which minimize $\phi$ on $B_i$.

\begin{defn}
A \emph{mixed subdivision} of $\sum A_i$ is a collection $\ms S$ of mixed cells of $\sum A_i$ such that
\begin{enumerate}
\item If $\sum B_i \in \ms S$ and $\sum F_i$ is a face of $\sum B_i$, then $\sum F_i \in \ms S$.
\item If $\sum B_i$, $\sum B_i' \in \ms S$, then $\conv(\sum B_i) \cap \conv(\sum B_i') = \conv(\sum F_i)$ where $\sum F_i$ is a face of $\sum B_i$ and $\sum B_i'$.
\item $\bigcup_{\sum B_i \in \ms S} \conv(\sum B_i) = \conv(\sum A_i)$.
\end{enumerate}
A mixed subdivision is \emph{fine} if all of its elements are fine.
\end{defn}

For mixed subdivisions $\ms S$, $\ms S'$, we say that $\ms S$ is a \emph{refinement} of $\ms S'$ if every element of $\ms S$ is a mixed cell of an element of $\ms S'$. Refinement gives a poset structure on the set of mixed subdivisions of $\sum A_i$ whose minimal elements are the fine mixed subdivisions.

\subsection{Basic sums and zonotopes} \label{sec:basiczono}

Let $A_1$, \dots, $A_n$ be as in the previous section. Let $\Delta^{m-1} = \{e_1,\dots,e_m\}$ be the standard basis of $\bb R^m$. We will say that $\sum A_i$ is \emph{basic} if $A_i \subseteq \Delta^{m-1}$ for all $i$.

The faces of a basic sum $\sum A_i$ can be described as follows: Given a weak ordering $\le_w$ of $\Delta^{m-1}$, for any $S \subseteq \Delta^{m-1}$ define $\min(\le_w, S)$ to be the set of minimal elements of $S$ with respect to $\le_w$. Then $\sum B_i$ is a face of $\sum A_i$ if and only if there is some weak ordering $\le_w$ such that for all $i$, either $B_i = \emptyset$ or $B_i = \min(\le_w, A_i)$. If $B_i = \min(\le_w, A_i)$ for all $i$, then we say $\sum B_i$ is \emph{induced} by $\le _w$.

Let $A_1$, \dots, $A_n$ be as in the previous section, and assume additionally that $\abs{A_i} = 2$ for all $i$. Then $\sum A_i$ is called a \emph{zonotope}, and its mixed subdivisions are called \emph{zonotopal tilings}. A fine zonotopal tiling is called a \emph{tight} zonotopal tiling.

\begin{exam}
For all $1 \le i < j \le m$, let $A_{ij} = \{e_i,e_j\}$. Then $\Pi^{m-1} := \sum_{1 \le i < j \le m} A_{ij}$ is a basic zonotope called the $(m-1)$-dimensional \emph{permutohedron}. Every weak ordering $\le_w$ of $\Delta^{m-1}$ induces a different face $\sum \min(\le_w, A_i)$ of the permutohedron.
\end{exam}

\subsection{Coherent mixed subdivisions}

Let $A_1$, \dots, $A_n$ be finite subsets of $\bb R^m$. For each $i = 1$, \dots, $n$, let $\omega_i : A_i \to \bb R$ be a function. For a mixed cell $\sum B_i$ of $\sum A_i$, define the \emph{lift} $(\sum B_i)^\omega$ by
\[
\left(\sum B_i \right)^\omega := \sum B_i^{\omega_i}.
\]
We call a mixed cell $\sum F_i$ of $(\sum A_i)^\omega$ a \emph{lower face} of $(\sum A_i)^\omega$ if there is a linear functional $\phi \in (\bb R^m \times \bb R)^\ast$ such that $\phi(0,1) > 0$ and for each $i$, either $F_i$ is empty or $F_i$ is the set of points which minimize $\phi$ on $A_i^{\omega_i}$. The collection of all $\sum B_i$ such that $(\sum B_i)^\omega$ is a lower face of $(\sum A_i)^\omega$ is a mixed subdivision of $\sum A_i$ called the \emph{coherent} mixed subdivision of $\sum A_i$ with respect to $\omega$. We denote it by $\ms S_{\sum A_i}^\omega$.

In the case where $\sum A_i$ is basic, we have the following characterization of $\ms S_{\sum A_i}^\omega$.

\begin{thm}[Develin and Sturmfels \cite{DS04}] \label{fans}
Let $\sum A_i$ be basic, and let $\omega_i : A_i \to \bb R$ be functions. For any $x = (x_1,\dotsc,x_m) \in \bb R^m$ and $A_i$, let $\type(x,\omega,A_i)$ be the set of all $e_j \in A_i$ such that
\[
x_j - \omega_i(e_j) = \max\{ x_k - \omega_i(e_k) : e_k \in A_i \}.
\]
Then $\sum B_i$ is an element of $\ms S_{\sum A_i}^\omega$ if and only if there is some $x \in \bb R^m$ such that for all $i$, either $B_i = \emptyset$ or $B_i = \type(x,\omega,A_i)$.
\end{thm}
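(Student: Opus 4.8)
The plan is to prove the equivalence by directly unwinding the definition of the coherent mixed subdivision $\ms S_{\sum A_i}^\omega$ and matching it, term by term, with the $\type$ condition via a change of variables that trades the linear functional $\phi$ appearing in the definition of a lower face for the point $x \in \bb R^m$.

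First I would recall that $\sum B_i \in \ms S_{\sum A_i}^\omega$ holds exactly when $\sum B_i^{\omega_i}$ is a lower face of $\sum A_i^{\omega_i}$, i.e.\ when there is some $\phi \in (\bb R^m \times \bb R)^\ast$ with $\phi(0,1) > 0$ such that for every $i$, either $B_i = \emptyset$ or $B_i^{\omega_i}$ is the set of minimizers of $\phi$ on $A_i^{\omega_i}$. Writing $\phi = (c, c_0)$ with $c = (c_1,\dotsc,c_m)$ and $c_0 \in \bb R$, the requirement $\phi(0,1) > 0$ is precisely $c_0 > 0$. Here is where the hypothesis that $\sum A_i$ is basic enters: each point of $A_i$ is some $e_j$, whose lift is $(e_j, \omega_i(e_j))$, and $\phi$ evaluates on it to $c_j + c_0\omega_i(e_j)$. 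Since lifting is a bijection $A_i \to A_i^{\omega_i}$, the clause ``$B_i^{\omega_i}$ is the minimizer set of $\phi$'' becomes ``$B_i = \{\, e_j \in A_i : c_j + c_0\omega_i(e_j) \text{ is minimal over } e_k \in A_i \,\}$''.

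The key step is the substitution. Given such a $\phi$, I would set $x := -c/c_0$; because $c_0 > 0$, dividing $c_j + c_0\omega_i(e_j)$ by $c_0$ preserves the location of the minimum, and $c_j/c_0 + \omega_i(e_j) = -(x_j - \omega_i(e_j))$, so $e_j$ achieves the minimum of $c_k + c_0\omega_i(e_k)$ over $A_i$ if and only if it achieves the maximum of $x_k - \omega_i(e_k)$, i.e.\ $e_j \in \type(x,\omega,A_i)$. This gives the forward direction. For the converse, given $x \in \bb R^m$ I would take $\phi := (-x, 1)$, note that $\phi(0,1) = 1 > 0$, and run the same computation in reverse to see that the minimizers of $\phi$ on $A_i^{\omega_i}$ are exactly the lifts of the elements of $\type(x,\omega,A_i)$; hence the $\type$ description of $\sum B_i$ is equivalent to $\sum B_i^{\omega_i}$ being a lower face.

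Since the argument is really just a dictionary between two formulations, I do not expect a substantive obstacle. The only points to watch are bookkeeping: that ``lower face'' is phrased with $\min$ while $\type$ is phrased with $\max$ (handled by the sign in $x = -c/c_0$), that $c_0 > 0$ is exactly the hypothesis needed to rescale without reversing the inequality, and that the basic hypothesis is what allows $\phi$ to be read off coordinatewise as $c_j + c_0\omega_i(e_j)$ on the lifted points.
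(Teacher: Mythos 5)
Your argument is correct: with the paper's definitions, membership of $\sum B_i$ in $\ms S_{\sum A_i}^\omega$ literally means the existence of $\phi=(c,c_0)$ with $c_0>0$ whose minimizer set on each lifted $A_i^{\omega_i}$ is $B_i^{\omega_i}$ (or $B_i=\emptyset$), and your substitution $x=-c/c_0$ (resp.\ $\phi=(-x,1)$ in the converse) turns ``minimize $c_j+c_0\omega_i(e_j)$'' into ``maximize $x_j-\omega_i(e_j)$,'' which is exactly the $\type$ condition; the sign and rescaling bookkeeping you flag is the only content, and you handle it. Note, though, that the paper gives no proof of this statement at all --- it is quoted as a known theorem of Develin and Sturmfels, whose original formulation is in the language of tropical convexity and regular subdivisions of $\Delta^{m-1}\times\Delta^{n-1}$ (related to the present setting via the Cayley trick). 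So your contribution is a self-contained verification: because the author has already \emph{defined} coherent mixed subdivisions via lower faces of the Minkowski sum of lifts, the cited statement reduces to the dictionary you describe, and no appeal to the tropical machinery is needed. The one hypothesis worth stating explicitly in a write-up is that lifting $e_j\mapsto(e_j,\omega_i(e_j))$ is a bijection $A_i\to A_i^{\omega_i}$ (clear, since first coordinates are distinct), so that identifying the minimizer set of $\phi$ on $A_i^{\omega_i}$ with a subset of $A_i$ is legitimate; with that remark your proof is complete.
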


\subsection{The Cayley trick}

Let $A_1$, \dots, $A_n$ be as before. Let $\Delta^{n-1} = \{f_1,\dots,f_n\}$ be the standard basis of $\bb R^n$. We define the \emph{Cayley embedding} of $\sum A_i$ to be the following set in $\bb R^m \times \bb R^n$:
\[
\mc C\left( \sum A_i \right) := \bigcup_{i=1}^n \{ (x,f_i) : x \in A_i \}.
\]
The \emph{Cayley trick} says the following.

\begin{thm}[Sturmfels \cite{Stu94}, Huber et al.\ \cite{HRS00}]
The following are true.
\begin{enumerate}
\item $\mc C$ is a bijection between the mixed cells of $\sum A_i$ and the cells of $\mc C(\sum A_i)$, and this map preserves facial relations.
\item For any mixed subdivision $\ms S$ of $\sum A_i$, the collection $\mc C(\ms S)$ is a subdivision of $\mc C(\sum A_i)$. The map $\ms S \mapsto \mc C(\ms S)$ is a poset isomorphism between the mixed subdivisions of $\sum A_i$ and the subdivisions of $\mc C(\sum A_i)$.
\item If $\omega_i : A_i \to \bb R$ are functions for $i = 1$, \dots, $n$ and $\mc C(\omega) : \mc C(\sum A_i) \to \bb R$ is defined as $\mc C(\omega)(x,f_i) = \omega_i(x)$, then
\[
\mc C \left(\ms S_{\sum A_i}^\omega \right) = \ms S_{\mc C(\sum A_i)}^{\mc C(\omega)}.
\]
\end{enumerate}
\end{thm}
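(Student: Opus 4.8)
The plan is to establish the three parts in order: part~(1) is bookkeeping, part~(3) is a formal consequence of parts~(1)--(2), and part~(2) carries the real content, which I would organize around the projection $\pi \colon \bb R^m \times \bb R^n \to \bb R^n$ onto the last $n$ coordinates. Under $\pi$ the Cayley polytope $\conv(\mc C(\sum A_i))$ maps onto the simplex $\Sigma := \conv\{f_1,\dots,f_n\}$, and the whole proof amounts to understanding how $\mc C(\ms S)$ sits over the fibers of $\pi$.

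For part~(1): a cell of $\mc C(\sum A_i)$ is by definition an arbitrary subset $D \subseteq \mc C(\sum A_i)$, and setting $B_i := \{x \in A_i : (x,f_i) \in D\}$ recovers the unique tuple with $\mc C(\sum B_i) = D$, so $\mc C$ is a bijection on cells; since it visibly respects the relation ``is a sub-cell of'', it is automatically a poset isomorphism wherever refinement is concerned. For the facial statement, a face of the cell $\mc C(\sum B_i)$ is the set of minimizers on it of some $\psi \in (\bb R^m \times \bb R^n)^\ast$, while a face of the mixed cell $\sum B_i$ is a tuple $\sum F_i$ cut out by some $\phi \in (\bb R^m)^\ast$. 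Given such a $\phi$, I would set $\psi(x,f_i) := \phi(x) + c_i$ with $c_i := -\min_{x \in B_i}\phi(x)$ when $F_i \neq \emptyset$ and $c_i$ very large when $F_i = \emptyset$; then the minimizers of $\psi$ on $\mc C(\sum B_i)$ are exactly $\mc C(\sum F_i)$. Conversely, given $\psi = (\phi,(c_i)_i)$, the set $\{x : (x,f_i) \in \text{(minimizers of $\psi$)}\}$ is, for each $i$, either empty or the set of minimizers of $\phi$ on $B_i$, hence defines a face of $\sum B_i$ via $\phi$. (Empty components $F_i$ correspond to faces of the Cayley cell lying in a coordinate hyperplane $\{f_i = 0\}$.)

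For part~(2), by part~(1) it remains to match the three subdivision axioms for $\mc C(\ms S)$ with the three mixed-subdivision axioms for $\ms S$. Closure under faces transfers directly. For the intersection and covering axioms, a routine convex-hull computation shows that for a mixed cell $\sum B_i$ with all parts nonempty the fiber of $\conv(\mc C(\sum B_i))$ over $t = (t_1,\dots,t_n) \in \Sigma$ is $\{(\sum_i t_i b_i,\, t) : b_i \in \conv(B_i)\}$; in particular the fiber over the barycenter $\beta := \tfrac1n(f_1+\dots+f_n)$ is an affine copy of $\tfrac1n\conv(\sum B_i)$, while mixed cells with an empty part contribute only lower-dimensional pieces sitting over the facets of $\Sigma$. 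Intersecting everything with the barycentric fiber $\pi^{-1}(\beta)$ therefore turns the intersection and covering axioms for a subdivision $\mc C(\ms S)$ of the Cayley polytope into precisely the intersection and covering axioms for $\ms S$, and conversely; this already proves the implication ``$\mc C(\ms S)$ a subdivision $\Rightarrow$ $\ms S$ a mixed subdivision'' in full.

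The main obstacle is the reverse implication, ``$\ms S$ a mixed subdivision $\Rightarrow$ $\mc C(\ms S)$ a subdivision'': closure under faces is free, but for the covering and intersection axioms one must control the fibers of $\pi$ over \emph{every} $t \in \Sigma$ with positive coordinates, i.e.\ one needs $\{\sum_i t_i\conv(B_i) : \sum B_i \in \ms S\}$ to be a polyhedral subdivision of $\sum_i t_i\conv(A_i)$, not merely at $t = \beta$. This is not a formal consequence of the $t=\beta$ case, since rescaling distinct Minkowski summands by distinct positive factors is not an affine transformation, so a priori the tiling over a general fiber could look different. I expect the heart of the argument --- and the substance of the cited results --- to be a lemma that a mixed subdivision of $\sum A_i$ remains a mixed subdivision after any rescaling $A_i \mapsto t_i A_i$ with all $t_i > 0$; the fibers of $\pi$ over the boundary of $\Sigma$ are then recovered from the interior fibers by a limiting argument, using that a finite union of compact sets is closed. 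Finally, for part~(3) one checks that $\mc C((\sum A_i)^\omega)$ and $\mc C(\sum A_i)^{\mc C(\omega)}$ agree --- up to reordering coordinates both equal $\bigcup_i\{(x,f_i,\omega_i(x)) : x \in A_i\}$ --- so the part~(1) argument, now also carrying the height coordinate and the sign condition $\phi(0,1) > 0$, shows that ``lower face'' is preserved by $\mc C$; the identity $\mc C(\ms S_{\sum A_i}^\omega) = \ms S_{\mc C(\sum A_i)}^{\mc C(\omega)}$ then follows from parts~(1)--(2).
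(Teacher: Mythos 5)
A preliminary remark: the paper contains no proof of this theorem to compare against --- it is stated as background and attributed to Sturmfels \cite{Stu94} and Huber--Rambau--Santos \cite{HRS00} --- so your proposal has to stand on its own, and it does not quite do so. Your part (1) is essentially correct: every linear functional on $\bb R^m \times \bb R^n$ restricted to the Cayley points has the form $(x,f_i) \mapsto \phi(x) + c_i$, and your choice of the constants $c_i$ (zero-normalized on the nonempty parts, large on the empty ones) does match faces of $\sum B_i$ with faces of $\mc C(\sum B_i)$ in both directions. Your part (3) is also fine as a formal consequence: $\mc C$ commutes with lifting, and the lower-face conditions translate exactly as in part (1) with the extra height coordinate and the constraint $\phi(0,1)>0$.

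The gap is in part (2), and it is precisely where the content of the theorem lives. For the direction ``$\ms S$ a mixed subdivision $\Rightarrow$ $\mc C(\ms S)$ a subdivision of the Cayley configuration'' you correctly identify that one must control the induced tiling on the fiber of $\pi$ over every point $t$ in the open simplex, i.e.\ that $\{\sum_i t_i \conv(B_i)\}$ tiles $\sum_i t_i \conv(A_i)$ for all $t_i>0$ and not just at the barycenter; but you then declare this rescaling-invariance lemma to be ``the substance of the cited results'' and do not prove it. That lemma is not a technicality --- it is the hard half of the Cayley trick --- so the proposal amounts to a reduction of the theorem to its own main difficulty. Even granting the lemma, two further steps are missing: for the intersection axiom you need the common face witnessing $\conv(\mc C(\sum B_i)) \cap \conv(\mc C(\sum B_i'))$ fiberwise to be the \emph{same} mixed face for all interior $t$ (a priori it could vary with $t$, and a varying face would not assemble into $\conv(\mc C(\sum F_i))$ for a single $\sum F_i$); and the cells with empty parts, which live entirely over proper faces of the simplex and are invisible to the barycentric fiber, need a separate treatment (your limiting/closedness argument handles the covering axiom on the boundary, but not the intersection axiom there). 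So: parts (1) and (3) are sound, the fiber-over-the-simplex framework for part (2) is the right one, but as written part (2) is an outline with its central lemma asserted rather than proved.
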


\begin{exam}
If $\sum A_i$ is basic, then $\mc C(\sum A_i)$ is a a subset of $\Delta^{m-1} \times \Delta^{n-1}$.
If $A_i = \Delta^{m-1}$ for all $i$, then $\mc C(\sum A_i) = \Delta^{m-1} \times \Delta^{n-1}$. 
\end{exam}

We say that two fine mixed subdivisions $\ms T$ and $\ms T'$ are connected by a \emph{flip} if the triangulations $\mc C(\ms T)$ and $\mc C(\ms T')$ are connected by a flip.

\section{A zonotope with disconnected flip graph}

\subsection{Zonotopal tilings of the 3-permutohedron}

We are now ready to construct a zonotope and a nontrivial component of its flip graph. The construction uses the 3-dimensional permutohedron and eight of its zonotopal tilings.

It will be notationally easier to work with Cayley embeddings of zonotopes rather than zonotopes themselves. Thus we will identify a zonotope with its Cayley embedding.

For a set $S$, let $\Gamma_S^k$ denote the set of all ordered $k$-tuples $(i_1,\dotsc,i_k)$ of distinct $i_1$, \dots, $i_k \in S$ under the equivalence relation $(i_1,\dotsc,i_k) \sim (i_2,\dotsc,i_k,i_1)$. We will use $(i_1 \dotsb i_k)$ to denote the equivalence class of $(i_1,\dotsc,i_k)$ in $\Gamma_S^k$. We write $-(i_1 \dotsb i_k)$ to denote $(i_k \dotsb i_1)$. We abbreviate $\Gamma_{[n]}^k$ as $\Gamma_n^k$.

Let $\Delta^{m-1} = \{e_1,\dotsc,e_m\}$ be the standard basis for $\bb R^m$ and let $\Delta^{\binom{m}{2}-1} := \{ f_\alpha \}_{\alpha \in \Gamma_m^2}$ be the standard basis of $\bb R^{\binom{m}{2}}$. Then
\[
\Pi^{m-1} := \bigcup_{(ij) \in \Gamma_m^2} \{ (e_i,f_{(ij)}), (e_j,f_{(ij)}) \}
\]
is the (Cayley embedding) of the $(m-1)$-dimensional permutohedron.

For any $(ijk) \in \Gamma_m^3$, we have a circuit $X_{(ijk)} = (X_{(ijk)}^+,X_{(ijk)}^-)$  in $\Pi^{m-1}$ with affine dependence relation
\[
(e_i,f_{(ij)}) - (e_j,f_{(ij)}) + (e_j,f_{(jk)}) - (e_k,f_{(jk)}) + (e_k,f_{(ki)}) - (e_i,f_{(ki)})
\]
and $X_{(ijk)}^+$, $X_{(ijk)}^-$ defined in terms of this affine relation. Define $\ms T_{(ijk)} := \ms T_{X_{(ijk)}}^+$. Note that $\ms T_{-\gamma} = \ms T_{X_{\gamma}}^-$.

We now set $m=4$. We will construct eight different tilings of $\Pi^3$, each indexed by a different element of $\Gamma_4^3$. Fix some $\gamma = (ijk) \in \Gamma_4^3$. Let $\omega : \Pi^3 \to \bb R$ be the function with
\[
\omega(e_i,f_{(ij)}) = \omega(e_j,f_{(jk)}) = \omega(e_k,f_{(ki)}) = 1
\]
and $\omega(x) = 0$ for all other $x \in \Pi^3$. We define $\ms T_{\Pi^3}^\gamma := \ms S_{\Pi^3}^\omega$. It is easy to check (using Theorem~\ref{fans}, for example) that $\ms T_{\Pi^3}^\gamma$ is a triangulation.

For each $\gamma' \in \Gamma_4^3$, the circuit $X_{\gamma'}$ is a face of $\Pi^3$, and thus $\ms T_{\Pi^3}^\gamma$ contains the triangulation it induces on $X_{\gamma'}$. From the definition of $\omega$, we have
\[
\omega(e_i,f_{(ij)}) - \omega(e_j,f_{(ij)}) + \omega(e_j,f_{(jk)}) - \omega(e_k,f_{(jk)}) + \omega(e_k,f_{(ki)}) - \omega(e_i,f_{(ki)}) > 0
\]
and if $l$ is the element of $[4] \minus \{i,j,k\}$, we have
\begin{align*}
\omega(e_i,f_{(ij)}) - \omega(e_j,f_{(ij)}) + \omega(e_j,f_{(jl)}) - \omega(e_l,f_{(jl)}) + \omega(e_l,f_{(li)}) - \omega(e_i,f_{(li)}) &> 0 \\
\omega(e_j,f_{(jk)}) - \omega(e_k,f_{(jk)}) + \omega(e_k,f_{(kl)}) - \omega(e_l,f_{(kl)}) + \omega(e_l,f_{(lj)}) - \omega(e_j,f_{(lj)}) &> 0 \\
\omega(e_k,f_{(ki)}) - \omega(e_i,f_{(ki)}) + \omega(e_i,f_{(il)}) - \omega(e_l,f_{(il)}) + \omega(e_l,f_{(lk)}) - \omega(e_k,f_{(lk)}) &> 0.
\end{align*}
Thus, by Proposition~\ref{circuittiling}, $\ms T_{\Pi_3}^\gamma$ induces the following triangulations on the circuits $X_{\gamma'}$:
\begin{equation} \label{threeflips}
\ms T_{(ijk)}, \ms T_{(ijl)}, \ms T_{(jkl)}, \ms T_{(kil)} \subseteq \ms T_{\Pi^3}^\gamma.
\end{equation}

\subsection{A group action on $\Gamma_4^3$}

The key property of $\ms T_{\Pi^3}^\gamma$ is that it only has flips on the circuits $X_{(ijl)}$, $X_{(jkl)}$, and $X_{(kil)}$. The idea will be to tile a larger zonotope with 3-permutohedra and then tile each 3-permutohedron with some $\ms T_{\Pi^3}^\gamma$ so that in the end, no circuit of size six can be flipped. To help with this construction, we will define a group action on $\Gamma_4^3$.

For each $\gamma  = (ijk) \in \Gamma_4^3$, we define a function $o_\gamma : \binom{[4]}{3} \to \Gamma_4^3$ by
\begin{align*}
o_\gamma(\{i,j,k\}) &= (ijk) \\
o_\gamma(\{i,j,l\}) &= (ijl) \\
o_\gamma(\{j,k,l\}) &= (jkl) \\
o_\gamma(\{k,i,l\}) &= (kil)
\end{align*}
where $\{l\} = [4] \minus \{i,j,k\}$. Hence, equation \eqref{threeflips} implies $\ms T_{o_\gamma(S)} \subseteq \ms T_{\Pi^3}^\gamma$ for all $S \in \binom{[4]}{3}$. It is easy to check that $\gamma$ is determined by $o_\gamma$.

Now, we will map each $\alpha \in \Gamma_4^2$ to a permutation $\pi_\alpha : \Gamma_4^3 \to \Gamma_4^3$. This map is completely determined by the following rules: For any distinct $i$, $j$, $k$, $l \in [4]$, we have
\begin{align*}
\pi_{(ij)}(ijk) &= (jil) \\
\pi_{(kl)}(ijk) &= (ijl).
\end{align*}
Let $G_{\Gamma_4^3}$ be the permutation group of $\Gamma_4^3$ generated by all the $\pi_\alpha$.

\begin{prop} \label{groupaction}
The following are true.
\begin{enumerate}
\item Every element of $G_{\Gamma_4^3}$ is an involution, and $G_{\Gamma_4^3}$ is abelian and transitive on $\Gamma_4^3$.
\item For $l \in [4]$, let $H_l$ be the subgroup of $G_{\Gamma_4^3}$ generated by $\pi_{(il)}$ for all $i \in [4] \minus \{l\}$. Let $i$,$j$,$k \in [4] \minus \{l\}$ be distinct, and let $\Gamma_4^3(ijk)$ be the set of all $\gamma \in \Gamma_4^3$ such that $o_\gamma(\{i,j,k\}) = (ijk)$. Then $\Gamma_4^3(ijk)$ is an orbit of $H_l$.
\end{enumerate}
\end{prop}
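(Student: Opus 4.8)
The plan is to make the group $G_{\Gamma_4^3}$ completely explicit and then read off both statements. Note first that $\bigl|\Gamma_4^3\bigr| = 8$, and that the entire setup — the generators $\pi_\alpha$, the functions $o_\gamma$, the subgroups $H_l$, and the sets $\Gamma_4^3(ijk)$ — is equivariant under relabeling $[4]$ by $S_4$, which acts transitively on the tuples $(l;i,j,k)$ with $l\in[4]$ and $\{i,j,k\} = [4]\minus\{l\}$. So once a handful of relabeling-invariant identities are in hand, it suffices to verify everything for one choice, say $l=4$ and $(ijk)=(123)$. Before that I would pin down that the $\pi_\alpha$ are well defined: for a fixed $\alpha=(ab)\in\Gamma_4^2$ and $\gamma\in\Gamma_4^3$ with underlying triple $T$, exactly one defining rule is triggered — rule~1 when $\{a,b\}\subseteq T$, and rule~2 when $\bigl|\{a,b\}\cap T\bigr| = 1$ (in which case the element of $\{a,b\}$ outside $T$ is forced to be $[4]\minus T$) — and the two ways of matching rule~1 (taking $i=a,j=b$ versus $i=b,j=a$) apply to the two distinct classes $(abk)$ and $(bak)=-(abk)$ and produce consistent outputs. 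Along the way one sees that reversing every cyclic order intertwines the rules with themselves, so the global reversal $\nu\colon\gamma\mapsto-\gamma$ commutes with every $\pi_\alpha$.

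The core of part~(1) is a short check, directly from the two rules, of three identities, all invariant under relabeling: (a) $\pi_\alpha^2=\mathrm{id}$ for every $\alpha$; (b) $\pi_{(ij)}\circ\pi_{(kl)}=\nu$ whenever $\{i,j,k,l\}=[4]$ — here one uses that the triple of any $\gamma$ contains exactly one of $\{i,j\}$ and $\{k,l\}$, so the composite is computed by applying one of the rules and then the other, and is seen to reverse the cyclic order; and (c) $\pi_{(il)}\circ\pi_{(jl)}=\pi_{(kl)}$ whenever $\{i,j,k,l\}=[4]$. From (a) and (c) (applied with $i,j$ interchanged as well), $H_l=\{\mathrm{id}\}\cup\{\pi_{(il)}:i\ne l\}$ is a Klein four-group; from (b), $\pi_{(ij)}=\nu\circ\pi_{(kl)}$ for complementary pairs, so every generator of $G_{\Gamma_4^3}$ lies in $\langle\nu\rangle H_l$, whence $G_{\Gamma_4^3}=\langle\nu\rangle H_l$. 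Since $\nu$ acts trivially on underlying triples while every nonidentity element of $H_l$ does not, $\nu\notin H_l$; as $\nu$ is central of order $2$ this gives $G_{\Gamma_4^3}\cong\mathbb{Z}/2\times H_l$, elementary abelian of order $8$. It is transitive because $H_l$ is already transitive on the four underlying triples — its image in $\mathrm{Sym}\bigl(\binom{[4]}{3}\bigr)\cong S_4$ is the Klein four-subgroup, acting regularly on the four triples — and $\nu$ reverses orientation. Being transitive of order $8$ on an $8$-element set, the action is regular, and part~(1) (abelian, transitive, exponent $2$) follows.

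For part~(2): the preceding paragraph shows $H_l$ acts on $\binom{[4]}{3}$ as a regular Klein four-group, so each $H_l$-orbit on $\Gamma_4^3$ surjects onto $\binom{[4]}{3}$ and hence has at least $4$ elements; since orbit sizes divide $|H_l|=4$, there are exactly two orbits, each of size $4$ and meeting every fibre of the forgetful map $\Gamma_4^3\to\binom{[4]}{3}$ in exactly one element (a ``section''). Next I would check that $\Gamma_4^3(ijk)$ is itself such a section: from the defining clauses of $o_\gamma$ one gets $o_{-\gamma}(S)=-\,o_\gamma(S)$ for all $S$, so of the two elements of $\Gamma_4^3$ with a given underlying triple exactly one lies in $\Gamma_4^3(ijk)$. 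Finally, $\Gamma_4^3(ijk)$ is $H_l$-invariant — equivalently, $\pi_{(ml)}$ maps it into itself for each $m\in[4]\minus\{l\}$ — which I would obtain either by matching the defining rule for $\pi_{(ml)}$ against the clauses of $o_\gamma$, or, more cheaply, by the single explicit computation for $l=4$, $(ijk)=(123)$ (which gives $\Gamma_4^3(123)=\{(123),(124),(143),(234)\}$, the $H_4$-orbit of $(123)$) together with the $S_4$-symmetry noted above. Being $H_l$-invariant, $\Gamma_4^3(ijk)$ is a union of $H_l$-orbits; since it has $4$ elements and each orbit has $4$ elements, it is exactly one orbit, proving~(2). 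The main obstacle throughout is the orientation bookkeeping: the cyclic-order conventions make identities (b), (c) and the $H_l$-invariance of $\Gamma_4^3(ijk)$ fiddly to verify, even though each reduces to a finite case check.
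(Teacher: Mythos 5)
Your proof is correct, but it is organized around a different device than the paper's. The paper's proof never analyzes relations among the generators directly: it identifies $\gamma$ with the function $o_\gamma$ and verifies the single uniform identity that $\pi_{(ij)}$ negates the values $o_\gamma(\{i,j,k\})$ and $o_\gamma(\{i,j,l\})$ while fixing $o_\gamma(\{j,k,l\})$ and $o_\gamma(\{k,i,l\})$; this embeds $G_{\Gamma_4^3}$ into $\bb Z_2^4$ (giving involutions and commutativity at once), transitivity follows from orbits of size $8$, and part (2) follows because the same identity shows $H_l$ fixes the value at $\{i,j,k\}$, each $H_l$-orbit in $\Gamma_4^3(ijk)$ has size $4$, and $\abs{\Gamma_4^3(ijk)}=4$. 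You instead compute the group structure explicitly via the relations $\pi_\alpha^2=\mathrm{id}$, $\pi_{(ij)}\pi_{(kl)}=\nu$, $\pi_{(il)}\pi_{(jl)}=\pi_{(kl)}$, obtaining $G_{\Gamma_4^3}=\langle\nu\rangle\times H_l\cong\bb Z_2^3$ with $H_l$ a Klein four-group acting regularly on $\binom{[4]}{3}$, and then get part (2) by orbit counting: there are exactly two $H_l$-orbits, each a section of $\Gamma_4^3\to\binom{[4]}{3}$, and $\Gamma_4^3(ijk)$ is an $H_l$-invariant section (via $o_{-\gamma}=-o_\gamma$ and invariance of the value at $\{i,j,k\}$ under $\pi_{(ml)}$, which is the same check as the paper's displayed identities). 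Your route yields more precise information (the exact order $8$, regularity, the explicit second orbit — and your sample computation $\Gamma_4^3(123)=\{(123),(124),(143),(234)\}$ is correct), at the price of several separate finite verifications ((b), (c), well-definedness); the paper's route is leaner because one family of sign identities powers both parts, and it is exactly the form of the statement that gets reused later (e.g.\ in Propositions~\ref{permsagree} and \ref{probably}, which cite the action on $o_\gamma$ from this proof). Both arguments ultimately rest on finite case checks of comparable size, so your proposal is a valid, slightly more structural alternative.
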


\begin{proof}
Since each $\gamma$ is determined by $o_\gamma$, we can view $G_{\Gamma_4^3}$ as an action on the set of functions $o_\gamma$.
We check that for all distinct $i$, $j$, $k$, $l \in [4]$ and $\gamma \in \Gamma_4^3$, we have
\begin{align*}
o_{\pi_{(ij)}\gamma}(\{i,j,k\}) &= -o_\gamma(\{i,j,k\}) \\
o_{\pi_{(ij)}\gamma}(\{i,j,l\}) &= -o_\gamma(\{i,j,l\}) \\
o_{\pi_{(ij)}\gamma}(\{j,k,l\}) &= o_\gamma(\{j,k,l\}) \\
o_{\pi_{(ij)}\gamma}(\{k,i,l\}) &= o_\gamma(\{k,i,l\})
\end{align*}
It follows that we can embed $G_{\Gamma_4^3}$ as a subgroup of $\bb Z_2^4$. This implies that every element of $G_{\Gamma_4^3}$ is an involution and $G_{\Gamma_4^3}$ is abelian. It is also easy to check from the above action on the $o_\gamma$ that every element of $\Gamma_4^3$ has orbit of size 8, and hence $G_{\Gamma_4^3}$ is transitive.

From the above action on $o_\gamma$, we see that $H_l$ maps $\Gamma_4^3(ijk)$ to itself and every element of $\Gamma_4^3(ijk)$ has orbit of size 4 under $H_l$. Since $\abs{\Gamma_4^3(ijk)} = 4$, $\Gamma_4^3(ijk)$ is an orbit of $H_l$.
\end{proof}

\subsection{A zonotope and a component of its flip graph} \label{zonocomponent}

Let $N$ be a positive integer to be determined later. For each distinct $i$, $j \in [4]$ and $-N \le r \le N$, we create a variable $f_{ij}^r$, and we make the identification of variables
\[
f_{ij}^r = f_{ji}^{-r}.
\]
Let $\{f_{ij}^r\}_{1 \le i < j \le 4, -N \le r \le N}$ be the standard basis for $\bb R^{6(2N+1)}$. Let
\begin{equation} \label{P3}
\Pi := \bigcup_{1 \le i < j \le 4} \bigcup_{-N \le r \le N} \{(e_i,f_{ij}^r), (e_j,f_{ij}^r) \}.
\end{equation}
be the 3-dimensional permutohedron with $2N+1$ copies of each generating vector. We will now prove our first main result.

\begin{thm} \label{main1}
For large enough $N$, the flip graph of $\Pi$ is not connected.
\end{thm}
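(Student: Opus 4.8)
The plan is to construct a single tight zonotopal tiling of $\Pi$ that admits no flip at all; since $\Pi$ is a $3$-dimensional zonotope with many generators it has more than one tight zonotopal tiling, so producing such an isolated vertex already shows the flip graph is disconnected. The flips supported on the hexagonal, six-element circuits $X_\gamma$ will be the essential case, with the remaining circuits handled along the way.

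First I would exhibit a coarse zonotopal subdivision $\ms C$ of $\Pi$ whose maximal cells are (translates of) copies of the small permutohedron $\Pi^3$, together with some faces of $\Pi^3$ along the boundary of $\Pi$. This is possible because, as a zonotope, $\Pi$ is the dilate $(2N+1)\,\Pi^3$, and $\Pi^3$ is a space-filling polytope: one takes $\ms C$ to be the restriction to $\Pi$ of the translational tiling of $\bb R^3$ by copies of $\Pi^3$, whose cells correspond to mixed cells of $\Pi$ in which, for each of the six directions, exactly one copy of the generator is "active." Its interior cells number $\Theta(N^3)$, and two copies are neighbours exactly when they share a square or hexagonal $2$-face. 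Via the Cayley trick, a tight zonotopal tiling of $\Pi$ refining $\ms C$ is precisely the data of a triangulation of each cell such that neighbouring cells agree on shared faces; and by Proposition~\ref{flipdef}, a flip of such a tiling is supported either on a circuit interior to one copy, on a circuit inside a shared $2$-face, or on a boundary circuit, and is available if and only if every cell of $\ms C$ meeting that circuit admits the corresponding flip with links matching across the shared faces.

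Next I would restrict to tilings in which each copy $c$ is triangulated by one of the eight special tilings $\ms T_{\Pi^3}^{\gamma(c)}$ and determine when such a tiling has a flip. By \eqref{threeflips} and Proposition~\ref{groupaction}, two neighbouring copies sharing the hexagonal face attached to a triple $\{i,j,k\}\in\binom{[4]}{3}$ are compatible exactly when $\gamma(c)$ and $\gamma(c')$ lie in the common orbit $\Gamma_4^3(ijk)$ of $H_l$, where $l$ is the remaining index, with analogous linear conditions at square faces and along the boundary. The key property that $\ms T_{\Pi^3}^{(ijk)}$ flips only on $X_{(ijl)}$, $X_{(jkl)}$, $X_{(kil)}$ then translates, through Proposition~\ref{flipdef}, into: the global tiling has a flip at the hexagonal face attached to $\{i,j,k\}$ if and only if both incident copies have labels whose "missing" index is $l$, and a would-be flip acts on $\gamma(c),\gamma(c')$ by multiplication by a generator of $G_{\Gamma_4^3}$. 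The square, boundary, and larger circuits give conditions of the same (or an easier) flavour, which one checks directly. Hence it suffices to find a compatible labeling $c\mapsto\gamma(c)$ of the cells of $\ms C$ satisfying, at every face, the corresponding "blocking" condition.

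The last step, where the non-explicit probabilistic construction enters and which I expect to be the crux, is to show that such a labeling exists once $N$ is large. The compatible labelings form a torsor over a group of $G_{\Gamma_4^3}$-valued "flat connections" on the cell-adjacency structure of $\ms C$, and I would draw one uniformly at random after fixing the (forced) boundary data. A blocking condition at a single face involves only $O(1)$ of the underlying random choices, but the constraints coming from the four directions $l$ pull against one another — a constant labeling, for instance, violates every blocking condition in the direction of its missing index — so satisfying all of them at once requires the grid to be large, and I would aim to show that a failure at any given face is a rare event, of probability decaying in $N$ (plausibly because it forces the random data along a whole lattice hyperplane of $\ms C$ to be degenerate). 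A union bound, or a Lovász Local Lemma argument, over the polynomially many faces then yields a positive probability of success provided $N$ exceeds an absolute constant; tracking the constants pins this down at about $100$ copies of each generator. Any tiling arising from such a labeling is an isolated vertex of the flip graph of $\Pi$, which completes the proof.
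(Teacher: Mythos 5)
Your skeleton shares the paper's main ingredients (the permutohedral subdivision of $\Pi$, the eight tilings $\ms T_{\Pi^3}^\gamma$, the group action $G_{\Gamma_4^3}$, a random labeling plus a union bound), but the logical core rests on a claim that is both unsupported and stronger than what this construction can deliver: that the resulting tight tiling has \emph{no} flips at all. Nothing in your argument blocks flips supported on the four-element circuits $X_{ij}^{rs}$ coming from parallel copies of a generator, on hexagonal circuits $X_{ijk}^{rst}$ other than the ones your labels are designed to protect, or on circuits meeting the non-permutohedral cells of Proposition~\ref{celllabeling}(3); the sentence ``the square, boundary, and larger circuits give conditions of the same (or an easier) flavour, which one checks directly'' is exactly where the difficulty sits, and it is not a matter of direct checking --- tilings produced this way do in general still admit such flips, and no construction of a flip-free tight tiling of a three-dimensional zonotope is known or attempted in the paper. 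The paper sidesteps this entirely: instead of an isolated vertex it exhibits a nonempty proper set $\mc S_{\ms C}$ of triangulations that is \emph{closed under flips} (Lemma~\ref{ensemble}), built from a family $\ms C$ of protected hexagon triangulations; flips on protected circuits are impossible (Proposition~\ref{zonoblock}), and all other flips --- which do occur --- cannot destroy a protected triangulation (Proposition~\ref{zonoshift}, via Proposition~\ref{genshift}). Without an analogue of this persistence statement, merely arranging labels that block the hexagonal flips you consider does not yield disconnectedness.

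Your local model of where flips can happen is also off in a way that matters. In the Cayley-embedding picture a hexagonal circuit $X_{ijk}^{rst}$ is not owned by the two geometrically adjacent copies of $\Pi^3$: it is a subconfiguration of every cell $\Pi(x)$ with $x_i-x_j=r$, $x_j-x_k=s$, $x_k-x_i=t$, i.e.\ of roughly $N$ cells differing in the fourth coordinate, and whether a flip on it exists is governed by all maximal simplices containing $(X_{ijk}^{rst})^-$ (Proposition~\ref{flip}), not by a condition on ``the two incident copies.'' This is not a technicality; it is the point of the probabilistic step. Proposition~\ref{probably}(B) needs, for each protected circuit, \emph{some} of those $\approx N$ cells to carry the label $(ijk)$, which is what supplies the blocking triple $\ms T_{ijl}^{rv(-u)},\ms T_{jkl}^{sw(-v)},\ms T_{kil}^{tu(-w)}$ required by Proposition~\ref{zonoblock}. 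Your criterion ``a flip at the shared hexagonal face iff both incident labels miss $l$'' has no counterpart of this, so the rare events you propose to union-bound are not the ones that actually need to be controlled; the probabilistic strategy is in the right spirit, but the target (an isolated vertex) and the local flip analysis both need to be replaced by the paper's closure argument for the proof to go through.
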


For distinct $i$, $j$, $k \in [4]$ and for any $-N \le r, s, t \le N$, let $X_{ijk}^{rst} = ((X_{ijk}^{rst})^+, (X_{ijk}^{rst})^-)$ be the circuit with affine dependence relation
\[
(e_i,f_{ij}^r) - (e_j,f_{ij}^r) + (e_j,f_{jk}^s) - (e_k,f_{jk}^s) + (e_k,f_{ki}^t) - (e_i,f_{ki}^t)
\]
and $(X_{ijk}^{rst})^+$, $(X_{ijk}^{rst})^-$ defined in terms of this relation. Let $\ms T_{ijk}^{rst} := \ms T_{X_{ijk}^{rst}}^+$. The following Lemma identifies a component of the flip graph of $\Pi$.

\begin{lem} \label{ensemble}
Let $\ms C$ be a collection of triangulations of the form $\ms T_{ijk}^{rst}$ such that for all $\ms T_{ijk}^{rst} \in \ms C$ and $\{l\} = [4] \minus \{i,j,k\}$, there exist $1 \le u,v,w \le N$ such that
\[
\ms T_{ijl}^{rv(-u)}, \ms T_{jkl}^{sw(-v)}, \ms T_{kil}^{tu(-w)} \in \ms C.
\]
Let $\mc S_{\ms C}$ be the set of all triangulations of $\Pi$ which contain every element of $\ms C$ as a subset. Then $\mc S_{\ms C}$ is closed under flips.
\end{lem}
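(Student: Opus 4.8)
The plan is to show that $\mc S_{\ms C}$ is closed under flips by arguing that any flip one could apply to a triangulation $\ms T \in \mc S_{\ms C}$ cannot destroy any of the triangulations $\ms T_{ijk}^{rst} \in \ms C$ contained in $\ms T$. Recall from Proposition~\ref{flipdef} that a flip of $\ms T$ is supported on a circuit $Y = (Y^+, Y^-)$ with $\ms T_Y^+ \subseteq \ms T$, and applying it replaces simplices of the form $\rho \cup \sigma$ with $\sigma \in \ms T_Y^+$ by simplices $\rho \cup \sigma'$ with $\sigma' \in \ms T_Y^-$; in particular the only cells of $\ms T$ that change are those containing a maximal simplex of $\ms T_Y^+$. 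So it suffices to show: if $\ms T' $ is obtained from $\ms T$ by a flip, then for every $\ms T_{ijk}^{rst} \in \ms C$ we still have $\ms T_{ijk}^{rst} \subseteq \ms T'$. Via the Cayley-trick dictionary of Section~3, $G(Y)$ is an even cycle in $K_{4, 6(2N+1)}$ alternating between $e$-vertices and $f$-vertices, and since $\Pi$ lives over the complete graph $K_4$ on $\{e_1,\dots,e_4\}$ with parallel edges indexed by the superscripts, such a cycle uses at most $4$ distinct $e$-vertices, hence has length $4$ or $6$.

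The heart of the argument is a case analysis on the support circuit $Y$. If $G(Y)$ is a $4$-cycle, it uses only two of the $f$-vertices, say $f_{ij}^r$ and $f_{ij}^{r'}$ with $r \ne r'$ (a $4$-cycle must reuse an $e$-pair, forcing the same unordered index pair $\{i,j\}$), so $Y$ is a ``parallel-copy'' circuit whose four points all have first coordinate in $\{e_i, e_j\}$. No circuit $X_{ijk}^{rst}$ shares a maximal simplex with such a $Y$, because $X_{ijk}^{rst}$ involves three distinct $e$-vertices; more carefully, one checks that a maximal simplex of $\ms T_Y^\pm$ cannot contain a maximal simplex of any $\ms T_{i'j'k'}^{r's't'}$ as the link data forces disjointness, so these flips act trivially on the elements of $\ms C$. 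If $G(Y)$ is a $6$-cycle, then $Y = X_{ijk}^{rst}$ (up to sign and the choice of which part is $Y^+$) for some distinct $i,j,k$ and some superscripts; the flip is then precisely the flip on the circuit $X_{ijk}^{rst}$. The point is that if $\ms T_{ijk}^{rst} \in \ms C$, this flip would remove $\ms T_{ijk}^{rst} = \ms T_Y^+$, so we must show this flip is \emph{not available}: that is, the maximal simplices of $\ms T_{ijk}^{rst}$ do \emph{not} all have the same link in $\ms T$. This is exactly where the defining condition on $\ms C$ enters. By hypothesis there are $1 \le u, v, w \le N$ with $\ms T_{ijl}^{rv(-u)}, \ms T_{jkl}^{sw(-v)}, \ms T_{kil}^{tu(-w)} \in \ms C \subseteq \ms T$, where $l$ is the fourth index. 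These three circuits each share an edge with $X_{ijk}^{rst}$ in $K_4$ (namely the edges over $\{i,j\}$, $\{j,k\}$, $\{k,i\}$ respectively, with matching superscripts), and the corresponding triangulations $\ms T_{ijl}^{rv(-u)}$, etc., pin down three \emph{different} links for the three maximal simplices of $\ms T_{ijk}^{rst}$ — mirroring the structure of $\ms T_{\Pi^3}^\gamma$, which had flips only on the other three circuits of its size-six circuit. Hence condition (2) of Proposition~\ref{flipdef} fails for $Y = X_{ijk}^{rst}$, the flip is unavailable, and no flip of $\ms T$ can delete an element of $\ms C$.

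Combining the two cases: every flip applicable to a triangulation in $\mc S_{\ms C}$ is either a $4$-cycle flip (which leaves all of $\ms C$ intact because it only modifies simplices over a single parallel class $\{e_i, e_j\}$) or a $6$-cycle flip on some $X_{ijk}^{rst}$ with $\ms T_{ijk}^{rst} \notin \ms C$ (which leaves $\ms C$ intact trivially), since the $6$-cycle flips on circuits \emph{in} $\ms C$ are exactly the ones ruled out by the link obstruction above. Therefore the result of any flip still contains every element of $\ms C$, so $\mc S_{\ms C}$ is closed under flips.

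I expect the main obstacle to be the bookkeeping in the $6$-cycle case: one must verify precisely that the three triangulations $\ms T_{ijl}^{rv(-u)}$, $\ms T_{jkl}^{sw(-v)}$, $\ms T_{kil}^{tu(-w)}$ forced to lie in $\ms T$ really do force three distinct links on the maximal simplices of $\ms T_{ijk}^{rst}$, so that hypothesis (2) of Proposition~\ref{flipdef} genuinely fails. This amounts to tracking, via Theorem~\ref{fans} or a direct combinatorial computation in $K_4$ with parallel edges, which simplices of a triangulation of $\Pi$ are compatible with all four circuit-triangulations simultaneously — essentially re-deriving locally the obstruction that $\ms T_{\Pi^3}^\gamma$ has no flip on $X_{(ijk)}$, but now with the superscripts threaded through so that the ``fourth index $l$'' structure closes up consistently (note the cyclic pattern $r v, v w^{-1}, \dots$ in the superscripts of the three auxiliary circuits). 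The $4$-cycle case should be routine once one observes that such a flip only touches simplices all of whose $e$-coordinates lie in a single pair $\{e_i,e_j\}$, which is disjoint from the $3$-element $e$-support of any $X_{i'j'k'}^{r's't'}$.
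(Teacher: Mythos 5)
Your two-step skeleton is the same as the paper's (a flip supported on a circuit other than some $X_{ijk}^{rst}$ with $\ms T_{ijk}^{rst}\in\ms C$ cannot destroy that triangulation, and a flip supported on such an $X_{ijk}^{rst}$ is unavailable because of the three auxiliary triangulations), but the decisive second step is asserted rather than proved, and it is exactly the nontrivial content of the lemma. You say the three inclusions $\ms T_{ijl}^{rv(-u)},\ms T_{jkl}^{sw(-v)},\ms T_{kil}^{tu(-w)}\subseteq\ms T$ ``pin down three different links'' for the maximal simplices of $\ms T_{ijk}^{rst}$, and you yourself flag the verification as the main obstacle; but nothing in the proposal carries it out, and it is not mere bookkeeping, since the obstruction must be established inside an \emph{arbitrary} triangulation of the large zonotope $\Pi$, not just inside one copy of $\Pi^3$ where $\ms T_{\Pi^3}^\gamma$ lives. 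In the paper this is Proposition~\ref{zonoblock}, proved in the appendix by a four-stage construction: starting from the simplex $X_{ijl}^{rv(-u)}\minus\{(e_i,f_{ij}^r)\}\in\ms T$, one repeatedly enlarges it using Proposition~\ref{grow}, at each stage eliminating all but one possible new vertex via Proposition~\ref{circuitparts} and the hypothesized circuit triangulations (Claims 1--4), until one obtains a maximal simplex $\tau\in\ms T$ with $(X_{ijk}^{rst})^-\subseteq\tau$ and $\abs{X_{ijk}^{rst}\cap\tau}=4$, which forbids the flip by Santos' criterion (Proposition~\ref{flip}). Your proposal contains no substitute for this argument, so the core of the lemma is missing.

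The ``preservation'' half also has concrete errors as written. First, circuits of $\Pi$ can have size $8$ (a cycle through all four $e$-vertices, e.g.\ $e_1,f_{12}^r,e_2,f_{23}^s,e_3,f_{34}^t,e_4,f_{14}^u$), so your claim that the support circuit has length $4$ or $6$ is false; this case is easy to dispose of (a maximal simplex of $\ms T_Y^+$ then has $7$ elements, too large to lie in a $5$-element simplex of $\ms T_{ijk}^{rst}$), but it must be treated. Second, your statement that ``the only cells of $\ms T$ that change are those containing a maximal simplex of $\ms T_Y^+$'' is false in general: the cell $Y^-$ itself (and its joins with link simplices) is removed by the flip and contains no maximal simplex of $\ms T_Y^+$. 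The correct statement is the paper's Proposition~\ref{genshift}/\ref{zonoshift}, which requires $G(\sigma)$ connected and is therefore applied only to the maximal simplices of $\ms T_{ijk}^{rst}$ (paths in $G(\Pi)$), the smaller cells being faces of these. Finally, your reason for the $4$-circuit case is wrong: a flip on $X_{ij}^{rr'}$ does \emph{not} only touch simplices with $e$-support in $\{e_i,e_j\}$ (the link factor $\rho$ is unrestricted, and a $2$-set cannot be disjoint from a $3$-subset of $[4]$ in any case); the correct reason is that a maximal simplex of $\ms T_{ijk}^{rst}$ cannot contain a maximal simplex of $\ms T_Y^+$, since that would force two distinct parallel copies $f_{ij}^r\neq f_{ij}^{r'}$ to occur among the three $f$-vertices of $X_{ijk}^{rst}$.
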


\begin{proof}
The proof follows immediately from the following two facts, both of which are proved in the appendix.

\begin{prop} \label{zonoblock}
Let $\ms T$ be a triangulation of $\Pi$, and suppose that there are distinct $i$, $j$, $k$, $l \in [4]$ and $1 \le r,s,t,u,v,w \le N$ such that
\[
\ms T_{ijl}^{rv(-u)}, \ms T_{jkl}^{sw(-v)}, \ms T_{kil}^{tu(-w)} \subseteq \ms T.
\]
Then $\ms T$ does not have a flip supported on $X_{ijk}^{rst}$.
\end{prop}

\begin{prop} \label{zonoshift}
Let $\ms T$ be a triangulation of $\Pi$  such that $\ms T_{ijk}^{rst} \subseteq \ms T$. Let $\ms T'$ be the result of a flip on $\ms T$ which is not supported on $X_{ijk}^{rst}$. Then $\ms T_{ijk}^{rst} \subseteq \ms T'$.
\end{prop}
\end{proof}

It remains to prove that there is some $\ms C$ for which $\mc S_{\ms C}$ is neither empty nor the whole set of triangulations of $\Pi$. This will be done in the next section.

\subsection{Construction of $\ms C$ and some $\ms T \in \mc S_{\ms C}$} \label{zonoconstruction}

We first consider the regular subdivision $\ms S_\Pi^\omega$ where $\omega : \Pi \to \bb R$ is a function such that
\[
\omega(e_i, f_{ij}^r) - \omega(e_j, f_{ij}^r) = r
\]
for all distinct $i$, $j \in [4]$ and $-N \le r \le N$. The cells of $\ms S_\Pi^\omega$ are described as follows.

\begin{prop} \label{celllabeling}
Let $\ms X$ be the set of $x = (x_1,x_2,x_3,x_4) \in \bb R^4$ such that $x_1 + \dotsb + x_4 = 0$ and if $ijkl$ is a permutation of $[4]$ such that $x_i \ge x_j \ge x_k \ge x_l$, then $x_i - x_j$, $x_j - x_k$, and $x_k - x_l$ are integers at most $N$. Let $\ms X^\ast$ be the set of $x \in \ms X$ such that $\abs{x_i-x_j} \le N$ for all $i$, $j \in [n]$. The following are true.
\begin{enumerate}
\item The map $C(x) = \{ (e_i, f_{ij}^r) : x_i - x_j \ge r \}$ is a bijection from $\ms X$ to the maximal cells of $\ms S_\Pi^\omega$.
\item If $x \in \ms X^\ast$, then $C(x)$ is the Cayley embedding of a translated 3-permutohedron. Specifically, $C(x) = \Pi(x) \cup D$, where
\[
\Pi(x) := \bigcup_{1 \le i < j \le 4} \{ (e_i, f_{ij}^{x_i - x_j}), (e_j, f_{ij}^{x_i-x_j}) \}
\]
and $D$ is a simplex affinely independent to $\Pi(x)$.
\item Let $x \in \ms X \minus \ms X^\ast$. Suppose $F_1$, \dots, $F_k$ are faces of $C(x)$ and $\ms T_1$, \dots, $\ms T_k$ are triangulations of these faces, respectively, which agree on intersections of these faces. Then there is a triangulation of $C(x)$ which contains $\ms T_1$, \dots, $\ms T_k$ as subsets.
\end{enumerate}
\end{prop}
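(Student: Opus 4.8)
The plan is to handle the three parts of the proposition separately. For part~(1) I would pull everything back through Theorem~\ref{fans}: the summands of $\Pi$ are the sets $A_{ij}^r = \{e_i,e_j\}$, and because $\omega(e_i,f_{ij}^r) - \omega(e_j,f_{ij}^r) = r$, a one-line computation shows that $\type(x,\omega,A_{ij}^r)$ equals $\{e_i\}$, $\{e_j\}$, or $\{e_i,e_j\}$ according as $x_i - x_j$ is greater than, less than, or equal to $r$. Hence the mixed cell associated with a point $x\in\bb R^4$ has underlying point set exactly $C(x)$, and it is full-dimensional in $\ms S_\Pi^\omega$ precisely when the \emph{active pairs} --- those $\{i,j\}$ with $x_i-x_j\in\{-N,\dots,N\}$ --- have $\{e_i-e_j\}$ of rank $3$, i.e.\ form a connected spanning subgraph of $K_4$. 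The combinatorial heart is to show this happens iff $x\in\ms X$: the ``if'' direction is immediate since in the decreasing reordering of $x$ the three consecutive pairs are active and span a path, and for ``only if'' I would pick a spanning tree among the active pairs and invoke that the change of basis between the root-lattice bases $\{e_a-e_{a+1}\}$ and $\{e_b-e_c:\{b,c\}\text{ a tree edge}\}$ is unimodular --- this forces each consecutive difference of the sorted $x$ to be an integer, and each is dominated by an active (hence $\le N$) tree-edge difference, so it is $\le N$. Bijectivity then follows: surjectivity onto maximal cells is the equivalence just proved, and injectivity by recovering from $C(x)$ the weak order induced by $x$ on $[4]$ (via which of $(e_i,f_{ij}^0),(e_j,f_{ij}^0)$ lie in $C(x)$), then each consecutive difference (the largest $r$ with the relevant vertex present), and finally $x$ itself using $x_1+\dots+x_4=0$.

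For part~(2), fix $x\in\ms X^\ast$, so every $\delta_{ij} := x_i-x_j$ is an integer in $[-N,N]$. A direct check shows $C(x)$ consists, for each pair, of the points $(e_i,f_{ij}^r)$ with $r\le\delta_{ij}$ and $(e_j,f_{ij}^r)$ with $r\ge\delta_{ij}$; the ``central'' points $(e_i,f_{ij}^{\delta_{ij}}),(e_j,f_{ij}^{\delta_{ij}})$ over all pairs are exactly $\Pi(x)$, and the remaining points form the claimed set $D$. Using the criterion of \S2.4 (a cell is affinely independent iff its bipartite graph is acyclic), every column vertex $f_{ij}^r$ with $r\ne\delta_{ij}$ is incident to a single row vertex of $G(C(x))$; hence $G(D)$ is a forest --- so $D$ is a simplex --- and adjoining these pendant edges to $G(\Pi(x))$ creates no new cycle, which is precisely the assertion that $D$ is affinely independent to $\Pi(x)$.

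For part~(3), fix $x\in\ms X\minus\ms X^\ast$; after sorting, $x_1-x_4>N$, so the active pairs form a \emph{proper} connected spanning subgraph $H\subsetneq K_4$ (the path $\{1,2\},\{2,3\},\{3,4\}$ is always active, the pair $\{1,4\}$ never). Every non-active summand, and every copy of an active pair other than the central one, has a point-type and so contributes a pendant column vertex to $G(C(x))$; peeling these off exhibits $C(x)$ as an iterated pyramid over $Z(x):=\mc C\big(\sum_{\{i,j\}\in H}[e_i,e_j]\big)$, the Cayley embedding of a proper connected spanning subzonotope of the $3$-permutohedron. I would then use two facts: (i) a triangulation of a pyramid is the cone from its apex of a triangulation of its base, so that the ``compatible face data extends'' property passes from any polytope to every iterated pyramid over it; and (ii) every proper face of $C(x)$, except possibly $Z(x)$ itself, is a pyramid --- a proper face corresponds to a nontrivial weak order on $[4]$, which leaves some pair untied and hence some column vertex pendant. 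Granting the extension property for $Z(x)$, one processes the proper faces of $C(x)$ in increasing order of dimension, extending the given data face by face (each being a pyramid, the coning step is routine), and finally cones the resulting triangulation of $Z(x)$ through all the apexes.

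The main obstacle is the base case $Z(x)$ when $H$ is $K_4$ with one edge deleted. Here $Z(x)$ is a $7$-dimensional polytope with $10$ vertices which, since the subdivided $K_4\minus e$ has no bridge, is genuinely \emph{not} a pyramid, so the argument above does not cover it; instead one must establish the extension property for $Z(x)$ directly. Its proper faces are all simplices or iterated pyramids over $\mc C(\Pi^2)$ (whose proper faces are all simplices), so the data extends to a compatible triangulation of $\partial Z(x)$, and it remains to show that any triangulation of $\partial Z(x)$ extends to $Z(x)$. Via the Cayley trick this amounts to: every choice of rhombic tiling on the hexagonal $2$-faces of the rank-$3$ zonotope $\sum_{\{i,j\}\in K_4\minus e}[e_i,e_j]$ (a hexagon plus a parallelogram, with five generators) extends to a tight zonotopal tiling --- a finite verification. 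In the other subcases ($H$ a path, or a triangle with a pendant edge) $H$ has a bridge, $Z(x)$ is itself a pyramid, and no separate base case is needed.
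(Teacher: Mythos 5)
Your parts (1) and (2) are correct and are exactly the paper's route (read everything off from Theorem~\ref{fans}, then observe that all copies other than the tied ones contribute pendant column vertices), and your peeling of coloop points to reduce $C(x)$ to the core $Z(x)$ matches the paper's decomposition $C(x)=X_{ijk}^{rst}\cup X_{jkl}^{suv}\cup D$. The gap is in part (3), and it starts with your description of the faces. A face of a cell of a Cayley embedding is not only induced by a weak order on $[4]$: the supporting functional also has coordinates on the $\bb R^n$ factor, so a face may delete whole summands (this is the $F_i=\emptyset$ option in Section 3). Consequently, besides $Z(x)$, the cell $C(x)$ has further non-pyramid proper faces: the two hexagonal circuits themselves and, crucially, the size-$8$ circuit supported on the copies $f_{ij}^{x_i-x_j}$, $f_{jl}^{x_j-x_l}$, $f_{lk}^{x_l-x_k}$, $f_{ki}^{x_k-x_i}$ (the $4$-cycle avoiding the shared pair $\{j,k\}$), which is in fact a facet of $Z(x)$. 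So your claim (ii), and likewise your description of $\partial Z(x)$ as built from simplices and iterated pyramids over $\mc C(\Pi^2)$, are false, and the finite verification you propose (extending rhombic tilings of the hexagonal $2$-faces only) is not the statement your reduction needs, since a triangulation of $\partial Z(x)$ also prescribes a triangulation of this size-$8$ circuit facet.

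Worse, the stronger statement you reduce to --- every triangulation of $\partial Z(x)$ extends to $Z(x)$ --- fails. $Z(x)$ has $10$ points and dimension $7$, i.e.\ at most $d+3$ points, so all of its triangulations are regular; by Proposition~\ref{circuittiling} the induced triangulations on the two hexagons and on the size-$8$ circuit are given by the signs of three affine evaluations of the height function, and these evaluations satisfy a linear identity (the third is the difference of the first two). Hence only six of the eight combinations of circuit triangulations are induced by triangulations of $Z(x)$, while the boundary does not couple the three choices (the circuits pairwise meet in simplices, and the facets containing the hexagons are pyramids over them), so forbidden combinations do occur as boundary data. The paper never makes this stronger claim: it only extends prescribed triangulations of the two hexagons $X_{ijk}^{rst}$ and $X_{jkl}^{suv}$, gluing them along the common pair via the pseudoproduct of Proposition~\ref{glue} and then joining with the coloop simplex $D$; correspondingly, in the application the face data fed into part (3) comes from neighboring cells $C(x')$ with $x'\in\ms X^\ast$, which never contain the size-$8$ circuit. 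To repair your argument you should drop the detour through full boundary triangulations and, as in the paper, reduce the given compatible data to data on the hexagonal circuits in the single nontrivial case ($x_i-x_k\le N$, $x_j-x_l\le N$, $x_i-x_l>N$ after sorting) and invoke Proposition~\ref{glue} or a direct check.
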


\begin{proof}
Part 1 follows from Theorem~\ref{fans}. Part 2 immediately follows. The only nontrivial case of part 3 is when $x$ satisfies $x_i \ge x_j \ge x_k \ge x_l$ for some permutation $ijkl$ of $[4]$, $x_i - x_k \le N$, $x_j - x_l \le N$, and $x_i - x_l > N$. In this case $C(x)$ is of the form $X_{ijk}^{rst} \cup X_{jkl}^{suv} \cup D$, where $D$ is a simplex affinely independent to $X_{ijk}^{rst} \cup X_{jkl}^{suv}$. By Proposition~\ref{glue} (or by an easy check), any triangulations of $X_{ijk}^{rst}$ and $X_{jkl}^{suv}$ can be extended to a triangulation of $X_{ijk}^{rst} \cup X_{jkl}^{suv}$, and hence to a triangulation of $C(x)$.
\end{proof}

For each $x \in \ms X^\ast$, we have an affine isomorphism $\Pi^3 \to \Pi(x)$ given by $f_{(ij)} \mapsto f_{ij}^{x_i-x_j}$. For each $\gamma \in \Gamma_4^3$, let $\ms T_{\Pi(x)}^\gamma$ be the image of $\ms T_{\Pi^3}^\gamma$ under this isomorphism.

We will now choose a random triangulation of every $C(x)$, $x \in \ms X^\ast$, as follows:
\begin{enumerate}
\item For each $1 \le i < j \le 4$ and $-N \le r \le N$, let $g_{ij}^r = g_{ji}^{-r}$ be an independent random element of $G_{\Gamma_4^3}$ which is 1 with probability $1/2$ and $\pi_{(ij)}$ with probability 1/2.
\item For each $x \in \ms X^\ast$, triangulate $\Pi(x)$ by $\ms T_{\Pi(x)}^{\gamma(x)}$, where
\[
\gamma(x) := \left( \prod_{1 \le i < j \le 4} g_{ij}^{x_i-x_j} \right) (123).
\]
\item Extend $\ms T_{\Pi(x)}^{\gamma(x)}$ uniquely to a triangulation $\ms T_{C(x)}$ of $C(x)$.
\end{enumerate}

\begin{prop} \label{permsagree}
For any two $x$, $x' \in \ms X^\ast$, the triangulations $\ms T_{C(x)}$ and $\ms T_{C(x')}$ agree on the common face of $C(x)$ and $C(x')$.
\end{prop}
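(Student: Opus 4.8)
The claim is that $\ms T_{C(x)}[F] = \ms T_{C(x')}[F]$ for the common face $F$ of the two maximal cells $C(x), C(x')$ of $\ms S_\Pi^\omega$, and the plan is to reduce this step by step to the group action of Section~4.2. First I would describe $F$ combinatorially. Via the Cayley trick $F$ corresponds to the mixed cell $\sum F_{pq}^r$ in which $F_{pq}^r$ is the common face, inside the segment $\{e_p,e_q\}$, of the cells of $\{e_p,e_q\}$ cut out by $C(x)$ and by $C(x')$. Using Proposition~\ref{celllabeling} to describe these cells, one finds that $F_{pq}^r$ is the whole segment $\{e_p,e_q\}$ exactly when $r = x_p - x_q = x'_p - x'_q$, and is empty or a single point otherwise. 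Since $x_p - x_q = x'_p - x'_q$ is equivalent to $(x-x')_p = (x-x')_q$, the set $\mc P$ of pairs carrying a full segment in $F$ is exactly the set of pairs lying in a common block of the level-set partition of $x - x'$; and because $\sum x_i = \sum x'_i = 0$, a trivial partition forces $x = x'$, so we may assume it is nontrivial.

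Next I would peel off a shared simplex. Let $S_F$ be the part of $F$ supported off the coordinates $f_{pq}^{x_p-x_q}$ with $\{p,q\}\in\mc P$; then $S_F$ is an affinely independent simplex of single points in pairwise distinct coordinates, $F$ is the coordinate-disjoint union of $S_F$ with the cell $Q := \bigcup_{\{p,q\}\in\mc P}\{(e_p,f_{pq}^{x_p-x_q}),(e_q,f_{pq}^{x_p-x_q})\}$, and both $S_F$ and $Q$ are the same whether read off from $C(x)$ or from $C(x')$ (the relevant coordinates agree since $x_p - x_q = x'_p - x'_q$ on $\mc P$). Because $\ms T_{C(x)}$ is the unique triangulation of $C(x) = \Pi(x) \cup D$ (Proposition~\ref{celllabeling}(2)) restricting to $\ms T_{\Pi(x)}^{\gamma(x)}$ on $\Pi(x)$, its restriction to any face of $\Pi(x)$ equals that of $\ms T_{\Pi(x)}^{\gamma(x)}$, and $\ms T_{C(x)}[F] = \{\sigma\cup\tau : \sigma\in\ms T_{\Pi(x)}^{\gamma(x)}[Q],\ \tau\subseteq S_F\}$. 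In the bipartite-graph language of Section~2.4, $Q$ is a disjoint union of (subdivided) cliques, one per block of the level-set partition of $x-x'$, so its graph is acyclic --- and $Q$, hence $F$, is a simplex with a unique triangulation --- unless some block has size at least $3$; as $\abs{[4]} = 4$, this happens only when the partition has type $\{3,1\}$. So the easy cases are settled and only type $\{3,1\}$ remains.

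In the remaining case $x - x'$ has a triple level set $\{i,j,k\}$ and a singleton $\{l\}$, so $\mc P = \{\{i,j\},\{j,k\},\{k,i\}\}$ and $Q$ is the circuit $X_{ijk}^{rst}$ with $r = x_i - x_j$, $s = x_j - x_k$, $t = x_k - x_i$; under the isomorphism $\Pi^3\to\Pi(x)$, $f_{(pq)}\mapsto f_{pq}^{x_p-x_q}$, it is the image of the circuit-face $X_{(ijk)}$ of $\Pi^3$ (and replacing $x$ by $x'$ gives the same $Q$, as these three differences agree). Thus $\ms T_{C(x)}[Q] = \ms T_{\Pi(x)}^{\gamma(x)}[X_{ijk}^{rst}]$ is, by \eqref{threeflips}, the image of $\ms T_{o_{\gamma(x)}(\{i,j,k\})}$, one of the two triangulations of $X_{ijk}^{rst}$, determined by $o_{\gamma(x)}(\{i,j,k\})$; likewise $\ms T_{C(x')}[Q]$ is determined by $o_{\gamma(x')}(\{i,j,k\})$. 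Hence the proposition reduces to $o_{\gamma(x)}(\{i,j,k\}) = o_{\gamma(x')}(\{i,j,k\})$. Now $\gamma(x) = \bigl(\prod_{p<q} g_{pq}^{x_p-x_q}\bigr)(123)$ and likewise for $x'$, so by Proposition~\ref{groupaction} ($G_{\Gamma_4^3}$ abelian, all elements involutions) $\gamma(x) = g\cdot\gamma(x')$ with $g = \prod_{p<q} g_{pq}^{x_p-x_q} g_{pq}^{x'_p-x'_q}$. Every factor with $\{p,q\}\in\mc P$ equals $(g_{pq}^{x_p-x_q})^2 = 1$, so only the factors for $\{i,l\},\{j,l\},\{k,l\}$ survive, each lying in $\{1,\pi_{(pl)}\}$; thus $g \in H_l$. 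By Proposition~\ref{groupaction}(2), $\Gamma_4^3(ijk)$ and its complement $\Gamma_4^3(ikj)$ are the two $H_l$-orbits on $\Gamma_4^3$, so $\gamma\mapsto o_\gamma(\{i,j,k\})$ is constant on $H_l$-orbits, and since $\gamma(x)$ and $\gamma(x')$ lie in one orbit we are done.

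The part I expect to be the real work is the first two paragraphs: realizing $F$ precisely as a mixed cell, verifying the full-segment criterion, and isolating the shared simplex factor $S_F$ so that the whole question collapses onto the single circuit $Q$. Once that structural reduction is in place, the type-$\{3,1\}$ analysis and the group-theoretic endgame are short.
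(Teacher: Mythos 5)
Your proof is correct and follows essentially the same route as the paper: reduce to the case where the common face contains a hexagonal circuit $X_{ijk}^{rst}$ (your level-set-partition analysis just makes explicit what the paper dismisses as ``the only non-trivial case''), observe the relevant coordinate differences of $x$ and $x'$ agree, and then use Proposition~\ref{groupaction}(2) to see that $o_{\gamma(x)}(\{i,j,k\}) = o_{\gamma(x')}(\{i,j,k\})$, your $H_l$-coset formulation being exactly the paper's statement that this value depends only on $g_{ij}^{x_i-x_j}$, $g_{jk}^{x_j-x_k}$, $g_{ki}^{x_k-x_i}$.
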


\begin{proof}
The only non-trivial case is when $C(x) \cap C(x')$ contains a circuit $X_{ijk}^{rst}$. We need to check that $\ms T_{C(x)}$ and $\ms T_{C(x')}$ agree on this circuit. If $X_{ijk}^{rst} \subseteq C(x) \cap C(x')$, then
\begin{align*}
x_i - x_j = x'_i - x'_j &= r \\
x_j - x_k = x'_j - x'_k &= s \\
x_k - x_i = x'_k = x'_i &= t.
\end{align*}
On the other hand, by Proposition~\ref{groupaction}(2), $o_{\gamma(x)}(\{i,j,k\})$ depends only on $g_{ij}^{x_i-x_j}$, $g_{jk}^{x_j-x_k}$, and $g_{ki}^{x_k-x_i}$. It follows that $o_{\gamma(x)}(\{i,j,k\}) = o_{\gamma(x')}(\{i,j,k\})$. Thus $\ms T_{\Pi(x)}^{\gamma(x)}$ and $\ms T_{\Pi(x')}^{\gamma(x')}$ contain the same triangulation of $X_{ijk}^{rst}$, as desired.
\end{proof}

By Proposition~\ref{permsagree} and Proposition~\ref{celllabeling}(3), we can thus extend the above triangulations of the $C(x)$ to a triangulation $\ms T$ of $\Pi$. Let $\ms C$ be the collection of all triangulations $\ms T_{ijk}^{rst} \subseteq \ms T$ with $i$, $j$, $k \in [4]$ distinct, $-N \le r,s,t \le N$, and $r+s+t=0$. We prove that $\ms C$ satisfies the hypotheses of Lemma~\ref{ensemble}. We will actually prove the following stronger statement, which we will need in the next section.

\begin{prop} \label{probably}
For large enough $N$, with probability greater than 0, $\ms T$ and $\ms C$ satisfy the following:
\begin{enumerate} \renewcommand{\labelenumi}{(\Alph{enumi})}
\item For every distinct $i$, $j$, $k \in [4]$ and $-N \le r \le N$, there exist $-N \le s,t \le N$ such that $\ms T_{ijk}^{rst} \in \ms C$.
\item For every $\ms T_{ijk}^{rst} \in \ms C$ and $\gamma \in \Gamma_4^3(ijk)$, there exists $x \in \ms X^\ast$ such that $X_{ijk}^{rst} \subseteq \Pi(x)$ and $\ms T[\Pi(x)] = \ms T_{\Pi(x)}^\gamma$. In particular, when $\gamma = (ijk)$, this implies there is some $-N \le u,v,w \le N$ such that
\[
\ms T_{ijl}^{rv(-u)}, \ms T_{jkl}^{sw(-v)}, \ms T_{kil}^{tu(-w)} \in \ms C
\]
where $\{l\} = [4] \minus \{i,j,k\}$.
\end{enumerate}
\end{prop}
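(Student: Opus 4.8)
The plan is to bound, via a union bound over finitely many bad events, the probability that (A) or (B) fails, and check that this bound tends to $0$ as $N\to\infty$; Proposition~\ref{probably} follows. Everything rests on a little $\bb F_2$-linear algebra. By the proof of Proposition~\ref{groupaction}, $G_{\Gamma_4^3}$ embeds in $\bb F_2^{\binom{[4]}{3}}$ by recording on which triples $o_\gamma$ is negated; since each $\pi_{(ij)}=\pi_{(ji)}$ negates exactly two triples and $G_{\Gamma_4^3}$ acts regularly on $\Gamma_4^3$ (being transitive and abelian), so that $|G_{\Gamma_4^3}|=8$, the image is the whole even-weight subspace $W$. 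Reindexing each triple by its complementary singleton identifies $W$ with the even-weight subspace of $\bb F_2^{[4]}$ and sends $\pi_{(ij)}$ to $e_k+e_l$, where $\{k,l\}=[4]\minus\{i,j\}$. Write $g_\gamma$ for the element of $G_{\Gamma_4^3}$ with $g_\gamma\cdot(123)=\gamma$. I will use: for distinct $i,j,k$ with $\{l\}=[4]\minus\{i,j,k\}$, the group $H_l=\langle\pi_{(il)},\pi_{(jl)},\pi_{(kl)}\rangle=\langle e_j+e_k,\,e_i+e_k,\,e_i+e_j\rangle$ has order $4$ (its three generators sum to $0$), whereas $\langle\pi_{(ij)},\pi_{(jk)},\pi_{(ki)}\rangle=\langle e_k+e_l,\,e_i+e_l,\,e_j+e_l\rangle=W=G_{\Gamma_4^3}$; and, by Proposition~\ref{groupaction}(2), $\Gamma_4^3(ijk)$ corresponds under $\gamma\leftrightarrow g_\gamma$ to the coset $g_{(ijk)}H_l$.

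Fix distinct $i,j,k$, put $\{l\}=[4]\minus\{i,j,k\}$, and fix integers $r,s,t$ with $r+s+t=0$, $|r|,|s|,|t|\le N$. Parametrizing by $a:=x_i-x_l$, the $x\in\ms X^\ast$ with $X_{ijk}^{rst}\subseteq\Pi(x)$ are exactly those with $x_i-x_j=r$, $x_j-x_k=s$, and $a$ an integer with $|a|,|a-r|,|a+t|\le N$; since $0,r,r+s$ are pairwise within $N$, there are at least $N+1$ such admissible $a$. For any admissible $x$, unwinding the definition of $\gamma(x)$ gives $\gamma(x)=\bigl(P_0\cdot P_1(a)\bigr)\cdot(123)$, where $P_0:=g_{ij}^r g_{jk}^s g_{ki}^t$ and $P_1(a):=g_{il}^a g_{jl}^{a-r} g_{kl}^{a+t}$. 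Three facts drive the argument: (i) $P_1(a)=\pi_{(il)}^{X_1}\pi_{(jl)}^{X_2}\pi_{(kl)}^{X_3}$ with $X_1,X_2,X_3$ independent uniform bits, so since $\pi_{(il)}\pi_{(jl)}\pi_{(kl)}=1$, $P_1(a)$ is uniform on the $4$-element group $H_l$; (ii) as $a$ ranges over the admissible values, the coin triples $(g_{il}^a,g_{jl}^{a-r},g_{kl}^{a+t})$ are pairwise disjoint and all disjoint from the three coins $g_{ij}^r,g_{jk}^s,g_{ki}^t$ determining $P_0$, so the $P_1(a)$ are mutually independent and independent of $P_0$; (iii) $\ms T_{ijk}^{rst}\in\ms C$ iff $\ms T$ induces $\ms T_{ijk}^{rst}$ on the face $X_{ijk}^{rst}$ of one (equivalently every) admissible $\Pi(x)$, which by Proposition~\ref{groupaction}(2) means $o_{\gamma(x)}(\{i,j,k\})=(ijk)$, a condition depending only on $P_0$ and equivalent to $P_0\in g_{(ijk)}H_l$.

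For (A): fix $(ijk)$ and $r$; there are at least $N+1$ pairs $(s,t)$ with $r+s+t=0$, $|s|,|t|\le N$, and for each let $E_s=\{P_0\in g_{(ijk)}H_l\}$ with $P_0=g_{ij}^r g_{jk}^s g_{ki}^t$. Since $\pi_{(ij)},\pi_{(jk)},\pi_{(ki)}$ is a basis of $W$, $P_0$ is uniform on $G_{\Gamma_4^3}$, so $\Pr[E_s]=\tfrac12$; moreover, conditioned on $g_{ij}^r$, the $E_s$ over different $s$ are independent (each determined by the disjoint coin pair $g_{jk}^s,g_{ki}^{-r-s}$) with conditional probability still $\tfrac12$, using that a coset of $H_l$ meets each coset of the order-$4$ subgroup $\langle\pi_{(jk)},\pi_{(ki)}\rangle\ne H_l$ in exactly two points. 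Hence $\Pr[\text{no admissible }(s,t)\text{ has }\ms T_{ijk}^{rst}\in\ms C]\le 2^{-(N+1)}$, and summing over the $8(2N+1)$ choices of $(ijk,r)$ bounds the failure probability of (A) by $8(2N+1)2^{-(N+1)}$. For (B): fix $(ijk)$, $(r,s,t)$ with $r+s+t=0$, $|r|,|s|,|t|\le N$, and $\gamma\in\Gamma_4^3(ijk)$, and bound the probability that $\ms T_{ijk}^{rst}\in\ms C$ yet no admissible $x$ has $\gamma(x)=\gamma$. On $\{\ms T_{ijk}^{rst}\in\ms C\}$ we have $P_0\in g_{(ijk)}H_l$ by (iii), and $g_\gamma\in g_{(ijk)}H_l$ since $\gamma\in\Gamma_4^3(ijk)$, so the target $h:=P_0g_\gamma$ lies in $H_l$ — this is exactly why (B) concerns only $\gamma\in\Gamma_4^3(ijk)$, since $\gamma(x)$ always lies in $\{g\cdot(123):g\in P_0H_l\}$ and thus only $4$ of the $8$ values of $\gamma$ are attainable for a fixed circuit. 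Now $\gamma(x)=\gamma$ for an admissible $x$ iff $P_1(a)=h$ for the corresponding $a$; conditioning on $g_{ij}^r,g_{jk}^s,g_{ki}^t$ (which decide whether $\ms T_{ijk}^{rst}\in\ms C$ and, if so, fix $h\in H_l$), the at least $N+1$ variables $P_1(a)$ are i.i.d.\ uniform on $H_l$ and independent of the conditioning, so the conditional probability that none equals $h$ is at most $(3/4)^{N+1}$. Summing over the at most $32(2N+1)^2$ choices of $(ijk,r,s,\gamma)$ bounds the failure probability of (B) by $32(2N+1)^2(3/4)^{N+1}$.

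Adding the two bounds, $\Pr[(A)\text{ or }(B)\text{ fails}]\le 8(2N+1)2^{-(N+1)}+32(2N+1)^2(3/4)^{N+1}\to0$, so for large $N$ both hold with positive probability. The ``in particular'' clause follows by applying (B) with $\gamma=(ijk)\in\Gamma_4^3(ijk)$: this gives $x\in\ms X^\ast$ with $X_{ijk}^{rst}\subseteq\Pi(x)$ and $\ms T[\Pi(x)]=\ms T_{\Pi(x)}^{(ijk)}$, and transporting \eqref{threeflips} (for $\gamma=(ijk)$) through the affine isomorphism $\Pi^3\to\Pi(x)$, $f_{(ab)}\mapsto f_{ab}^{x_a-x_b}$, shows $\ms T_{ijl}^{rv(-u)},\ms T_{jkl}^{sw(-v)},\ms T_{kil}^{tu(-w)}\subseteq\ms T$ with $u=x_i-x_l$, $v=x_j-x_l$, $w=x_k-x_l\in\{-N,\dots,N\}$, each of index-sum $0$, hence in $\ms C$. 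I expect the crux to be the group-theoretic bookkeeping above: that $H_l$ has order $4$, that $P_1(a)$ is uniform on it and independent across the admissible $\Pi(x)$, and that $\ms C$-membership of $\ms T_{ijk}^{rst}$ forces $P_0$ into precisely the coset $g_{(ijk)}H_l$ whose image under $g\mapsto g\cdot(123)$ is $\Gamma_4^3(ijk)$, so that in (B) each required target $h$ lands in the support of the $P_1(a)$; once this is in place, the rest is a routine union bound over independent uniform draws from a set of size $4$.
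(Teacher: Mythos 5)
Your proof is correct and takes essentially the same route as the paper's: a union bound over the finitely many bad events, with failure probability $2^{-\Omega(N)}$ per event for (A) and $(3/4)^{-\,}{}^{\Omega(N)}$-type decay (i.e.\ $(3/4)^{\Omega(N)}$) per event for (B), obtained exactly as in the paper from the disjointness of the coin families and the uniformity/coset facts that the paper cites from Proposition~\ref{groupaction}; your $\bb F_2$-linear bookkeeping merely makes those facts explicit. The only slip is the union-bound constant for (A): the bad events are indexed by ordered triples $(i,j,k)$ (the position of $r$ within the cyclic class matters), so there are $24(2N+1)$ of them rather than $8(2N+1)$, which of course does not affect the conclusion.
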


\begin{proof}
First, note that for any distinct $i$, $j$, $k \in [4]$ and $-N \le r,s,t \le N$ with $r+s+t=0$, there is some $x \in \ms X^\ast$ such that $X_{ijk}^{rst} \subseteq \Pi(x)$. Hence $\ms C$ contains a triangulation of $X_{ijk}^{rst}$. Letting $\gamma = g_{ij}^r g_{jk}^s g_{ki}^t (123)$, we have
\begin{equation} \label{whichcircuit}
\ms T_{ijk}^{rst} \in \ms C \quad\text{if and only if } o_\gamma(\{i,j,k\}) = (ijk).
\end{equation}

We first bound the probability that (A) does not hold. Fix distinct $i$, $j$, $k \in [4]$ and $-N \le r \le N$. Let $H$ be the set of all ordered pairs $(s,t) \in [-N,N]^2$ with $r+s+t=0$. Note that $\abs{H} \ge N$. For each $(s,t) \in H$ and $\gamma(s,t) := g_{ij}^r g_{jk}^s g_{ki}^t (123)$, it is easy to see from the proof of Proposition~\ref{groupaction} that $o_{\gamma(s,t)}(\{i,j,k\}) = (ijk)$ with probability $1/2$. In fact, this probability does not change if we fix $g_{ij}^r$, so for all $(s,t) \in H$ these probabilities are mutually independent. Now, from \eqref{whichcircuit}, there does not exist $(s,t) \in H$ such that $\ms T_{ijk}^{rst} \in \ms C$ if and only if $o_{\gamma(s,t)}(\{i,j,k\}) \neq (ijk)$ for all $(s,t) \in H$. The probability this happens is
\[
\left( \frac{1}{2} \right)^{\abs{H}} \le \left( \frac{1}{2} \right)^{N}.
\]
By the union bound, the probability that this happens for some distinct $i$, $j$, $k \in [4]$ and $-N \le r \le N$ is at most
\begin{equation}
24(2N+1)\left( \frac{1}{2} \right)^{N}.
\end{equation}
This gives an upper bound on the probability of (A) not happening.

We now do the same for (B). Fix $\ms T_{ijk}^{rst} \in \ms C$ and $\gamma \in \Gamma_4^3(ijk)$. Let $H$ be the set of all $x \in \ms X^\ast$ such that $X_{ijk}^{rst} \subseteq \Pi(x)$. Note that $\abs{H} \ge N$. Let $\{l\} = [4] \minus \{i,j,k\}$.

We have $\ms T_{ijk}^{rst} \in \ms C$, which happens if and only if $g_{ij}^r g_{jk}^s g_{ki}^t (123) \in \Gamma_4^3(ijk)$. Suppose we fix $g_{ij}^r$, $g_{jk}^s$, and $g_{ki}^t$ such that $g_{ij}^r g_{jk}^s g_{ki}^t (123) \in \Gamma_4^3(ijk)$. Then for each $x \in H$, it follows from Proposition~\ref{groupaction} and the definition of $\gamma(x)$ that $\gamma(x) = \gamma$ with probability $1/4$. Moreover, since $(x_i - x_l, x_j - x_l, x_k - x_l)$ is different for each $x \in H$, these probabilities are mutually independent for all $x \in H$. Now, for each $x \in \ms X^\ast$ we have $\ms T[\Pi(x)] = \ms T_{\Pi(x)}^{\gamma(x)}$. Thus the probability that there is no $x \in H$ with $\ms T[\Pi(x)] = \ms T_{\Pi(x)}^\gamma$ is
\[
\left( \frac{3}{4} \right)^{\abs{H}} \le \left( \frac{3}{4} \right)^{N}.
\]
The probability that this occurs for some distinct $i$, $j$, $k \in [4]$, $-N \le r,s,t \le N$ with $r+s+t=0$, and $\gamma \in \Gamma_4^3(ijk)$ is thus at most
\begin{equation}
96(2N+1)^2\left( \frac{3}{4} \right)^{N}.
\end{equation}
Hence, the probability that either (A) or (B) does not hold is at most
\[
24(2N+1)\left( \frac{1}{2} \right)^{N} + 96(2N+1)^2\left( \frac{3}{4} \right)^{N}
\]
which for large enough $N$ (specifically, $N \ge 48$) is less than 1.
\end{proof}

Thus there exists $\ms C$ which satisfies the hypotheses of Lemma~\ref{ensemble} and $\ms T \in \mc S_{\ms C}$. There are triangulations of $\Pi$ which are not in $\mc S_{\ms C}$; for example, a different choice of the $g_{ij}^r$ would yield a triangulation which does not contain every element of $\ms C$. This proves Theorem~\ref{main1}.

\section{A product of two simplices with disconnected flip graph}

We will use the construction from the previous section to show that the product of two simplices does not in general have connected flip graph. The idea will be to go up one dimension and construct multiple copies of $\ms T$ in different directions in this space.

\subsection{A component of the flip graph} \label{sec:tricomponent}

For each $\alpha \in \Gamma_5^2$, construct a finite set $\Delta_\alpha$ of variables. We will determine the size of this set later. Let $\Delta^4 := \{e_1,\dots,e_5\}$ be the standard basis for $\bb R^5$, and let $\Delta^{n-1} := \bigcup_{\alpha \in \Gamma_5^2} \Delta_\alpha$ be the standard basis for $\bb R^n$, where $n = \sum_{\alpha \in \Gamma_5^2} \abs{\Delta_\alpha}$. Let $A := \Delta^4 \times \Delta^{n-1}$. We prove the following.

\begin{thm} \label{main2}
For large enough $n$, the flip graph of $A$ is not connected.
\end{thm}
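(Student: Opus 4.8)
The plan is to mimic the structure of Section~4, lifting the zonotope $\Pi$ to a product of simplices via a second Cayley-trick-style construction. Recall that $\Pi$ sits inside $\Delta^3 \times \Delta^{6(2N+1)-1}$ as a Cayley embedding. The idea is to take $\Delta^4 \times \Delta^{n-1}$, think of the extra simplex factor $\Delta^4 = \{e_1,\dots,e_5\}$ as indexed by $[5]$, and for each $\alpha = (pq) \in \Gamma_5^2$ use the coordinate subspace spanned by $e_p,e_q$ together with the block $\Delta_\alpha$ of the $\Delta^{n-1}$-factor to embed one copy of a permutohedron-like zonotope $\Pi$ (with $2N+1 = \abs{\Delta_\alpha}/6$ copies of each generator, roughly). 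Concretely, for $(pq)\in\Gamma_5^2$ the face of $A$ spanned by $\{e_p,e_q\}\times\Delta_\alpha$ is a product $\Delta^1\times\Delta^{\abs{\Delta_\alpha}-1}$, which is the Cayley embedding of a zonotope; choosing $\Delta_\alpha$ appropriately makes this face (a translate/relabeling of) $\Pi$. So $A$ contains ten overlapping copies of $\Pi$, indexed by $\Gamma_5^2$, and any two of them share a common face.

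The main steps, in order, are as follows. First I would make precise the embedding of one $\Pi$ into each $\{e_p,e_q\}\times\Delta_\alpha$-face and identify, inside $A$, the analogues of the circuits $X_{ijk}^{rst}$: these are now $6$-cycles in the bipartite graph $G = K_{5,n}$ whose $\Delta^4$-side alternates among three of the five vertices. Second, I would state and prove a blocking lemma analogous to Proposition~\ref{zonoblock} and a persistence lemma analogous to Proposition~\ref{zonoshift}, asserting that if a triangulation $\ms T$ of $A$ contains the three ``witness'' triangulations $\ms T_{ijl}^{rv(-u)},\ms T_{jkl}^{sw(-v)},\ms T_{kil}^{tu(-w)}$ (interpreted in the appropriate $\Pi$-copy) then $\ms T$ has no flip on the corresponding circuit, and that flips not supported on a circuit $X$ preserve the triangulation $\ms T_X^+$ of $X$. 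Third, I would define the collection $\ms C$ of ``marked'' triangulations-of-circuits across all ten copies and show, exactly as in Lemma~\ref{ensemble}, that $\mc S_{\ms C}$ — the set of triangulations of $A$ containing every element of $\ms C$ — is closed under flips. Fourth, I would construct a specific $\ms T \in \mc S_{\ms C}$: on each $\Pi$-copy use the probabilistic construction of Section~4.4, then glue across the copies. The crucial point is that two $\Pi$-copies indexed by $(pq)$ and $(p'q')$ with $\abs{\{p,q\}\cap\{p',q'\}} \le 1$ interact only along lower-dimensional faces, so Proposition~\ref{permsagree}-type compatibility, together with the extension property for products of simplices (which follows from the general subdivision-extension machinery, or can be checked directly as in Proposition~\ref{celllabeling}(3)), lets us glue. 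Finally, for non-emptiness-and-properness: Proposition~\ref{probably} guarantees that with positive probability each individual $\Pi$-copy's triangulation has properties (A) and (B), and by a union bound over the ten copies (and a large enough $n$, i.e.\ large enough $N$) all ten simultaneously satisfy (A),(B); this makes $\ms C$ satisfy the hypotheses of the ensemble lemma while the freedom in the $g_{ij}^r$ produces a triangulation of $A$ outside $\mc S_{\ms C}$.

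The hard part will be the cross-copy interaction: I need to check that a flip in $A$ supported on a circuit lying partly in one $\Pi$-copy and partly in another cannot destroy a marked triangulation of some $X_{ijk}^{rst}$. Circuits of $A$ are $6$-cycles of $K_{5,n}$, and when such a cycle uses vertices from two different $\Delta_\alpha$-blocks it corresponds to a circuit that is genuinely ``new'' relative to the single-zonotope picture; I must argue either that such a circuit shares at most a proper face with each $\Pi$-copy (so flipping it leaves the marked circuit-triangulations untouched, via the persistence lemma), or that the witness triangulations already block it. This is the step where the choice of which index pairs $\{p,q\}$ label the copies, and the fact that the $\Delta^4$-factor has exactly five vertices (so that the alternating $3$-subsets behave as in $\Gamma_4^3$ within each copy), really gets used — the bookkeeping of which $6$-cycles can occur and how they meet the ten $\Pi$-copies is the technical core. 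Everything else is a fairly mechanical transcription of Section~4 one dimension up, with a union bound over a constant number ($10$, or $\binom{5}{2}$) of copies absorbed into the ``large enough $n$'' hypothesis.
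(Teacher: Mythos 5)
Your high-level plan (transplant the Section~4 ensemble into $\Delta^4\times\Delta^{n-1}$, build a triangulation from copies of the random zonotopal construction, and absorb a union bound into ``large enough $n$'') is the right general direction, but the step you defer as ``the hard part'' is exactly where the naive transcription fails, and the fix requires a new idea that your proposal does not contain. In $\Delta^4\times\Delta^{n-1}$ there are circuits of size four, namely $X_{ij}^{f_1f_2}$ with $f_1\in\Delta_{(ij)}$ and $f_2\in\Delta_{(il)}$, which have no analogue inside a single zonotope copy (in a zonotope every vertex of the $\Delta^{n-1}$ side has only two neighbors, so all circuits have size six). A flip supported on such a $4$-circuit can destroy a marked triangulation $\ms T_{i_1i_2i_3}^{f_1f_2f_3}$: the maximal simplices of the marked circuit's triangulation are paths of length five in $K_{5,n}$, and they do contain maximal simplices of $\ms T_X^+$ for these $4$-circuits, so your persistence lemma (the analogue of Proposition~\ref{trishift}) gives no protection, and the three Section~4 witnesses do not block such flips either. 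Consequently the set ``all triangulations containing every element of $\ms C$'' is not closed under flips, and neither of the two escape routes you sketch works. The paper's resolution is to strengthen both the invariant and the marked collections: the component $\mc S_{\ms C}$ of Lemma~\ref{ensemble2} is cut out by the containment condition \emph{and} the extra condition~(ii), requiring the shifted simplices $X_{ijk}^{f_1f_2f_3}\minus\{(e_i,f_1)\}\cup\{(e_i,f)\}$ with $f\in\Delta_{(il)}$, $l\notin S$, to lie in $\ms T$; the collections $\ms C_{S,i,j}$ are indexed by $4$-subsets $S$ together with an ordered pair $(i,j)$ and must satisfy the additional closure Property~2 and the spanning Property~3, and it is these extra data that yield Proposition~\ref{flip2} (no flips on the $4$-circuits) and let condition~(ii) itself be preserved under flips.

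There is also a concrete error in your geometric setup: a copy of the $3$-dimensional zonotope $\Pi$ cannot be embedded in the face $\{e_p,e_q\}\times\Delta_\alpha$, since that face is $\Delta^1\times\Delta^{\abs{\Delta_\alpha}-1}$, the Cayley embedding of a one-dimensional zonotope; a copy of $\Pi$ needs four of the five vertices $e_i$ and all six blocks $\Delta_{(i'j')}$ with $i',j'$ among them. The paper instead embeds a $4$-dimensional permutohedron $\Pi\subseteq A$, takes a regular subdivision whose complex cells are assembled from $60$ copies $\Pi_{S,i,j}$ of the Section~4 zonotope (one for each $S\in\binom{[5]}{4}$ and ordered pair $i,j\in S$, each carrying its own sub-blocks $\Delta_{S,i,j,(i'j')}$), glues the random triangulations of these copies along the shared edge $\{(e_i,f_{i,j}^\ast),(e_j,f_{i,j}^\ast)\}$ via the nontrivial pseudoproduct construction of Proposition~\ref{glue} (proved with Rambau's criterion), and then extends from $\Pi$ to all of $A$ via Proposition~\ref{extensiontoA}. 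Your gluing step (``copies interact only along lower-dimensional faces, so a Proposition~\ref{permsagree}-type compatibility suffices'') substantially underestimates this part: the copies overlap in a common edge inside the same cells of the regular subdivision, and one also needs the shifted collections $\ms D_{S,i,j}$ (with Propositions~\ref{fstarmax} and \ref{downshift}) to make Property~3 and condition~(ii) hold for the constructed triangulation. So the proposal as written would not go through without these additional ideas.
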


For distinct $i_1$, \dots, $i_t \in [5]$ and distinct $f_1$, \dots, $f_t \in \Delta^{n-1}$, let
\[
X_{i_1 \dotsb i_t}^{f_1 \dotsb f_t} = ((X_{i_1 \dotsb i_t}^{f_1 \dotsb f_t})^+,(X_{i_1 \dotsb i_t}^{f_1 \dotsb f_t})^-)
\]
be the circuit in $A$ with affine dependence relation
\[
(e_{i_1}, f_1) - (e_{i_2}, f_1) + (e_{i_2}, f_2) - (e_{i_3}, f_3) + \dotsb + (e_{i_t}, f_t) - (e_{i_1}, f_t)
\]
and $(X_{i_1 \dotsb i_t}^{f_1 \dotsb f_t})^+$, $(X_{i_1 \dotsb i_t}^{f_1 \dotsb f_t})^-$ defined in terms of this relation. Let
\[
\ms T_{i_1 \dotsb i_t}^{f_1 \dotsb f_t} := \ms T_{X_{i_1 \dotsb i_t}^{f_1 \dotsb f_t}}^+.
\]
If $f_1 \in \Delta_{(i_1i_2)}$, $f_2 \in \Delta_{(i_2i_3)}$, \dots, $f_t \in \Delta_{(i_ti_1)}$, then we call the circuit $X_{i_1 \dotsb i_t}^{f_1 \dotsb f_t}$ and the triangulation $\ms T_{i_1 \dotsb i_t}^{f_1 \dotsb f_t}$ \emph{zonotopal}.

We now identify a component of the flip graph of $A$.

\begin{lem} \label{ensemble2}
For each $S \in \binom{[5]}{4}$ and distinct $i$, $j \in S$, let $\ms C_{S,i,j}$ be a collection of zonotopal triangulations of the form $\ms T_{i_1i_2i_3}^{f_1f_2f_3}$ where $i_1$, $i_2$, $i_3 \in S$. Assume that for each $S \in \binom{[5]}{4}$ and distinct $i$, $j \in S$, and for each $\ms T_{i_1i_2i_3}^{f_1 f_2 f_3} \in \ms C_{S,i,j}$, the following are true.
\begin{enumerate}
\item If $\{i_4\} = S \minus \{i_1,i_2,i_3\}$, then there exist $f_1' \in \Delta_{(i_1i_4)}$, $f_2' \in \Delta_{(i_2i_4)}$, and $f_3' \in \Delta_{(i_3i_4)}$ such that
\[
\ms T_{i_1i_2i_4}^{f_1f_2'f_1'}, \ms T_{i_2i_3i_4}^{f_2f_3'f_2'}, \ms T_{i_3i_1i_4}^{f_3f_1'f_3'} \in \ms C_{S,i,j}.
\]
\item If $i_1 = i$, $i_2 = j$, and $i_4$ is as above, then there exist $f_1' \in \Delta_{(i_1i_4)}$, $f_2' \in \Delta_{(i_2i_4)}$, and $f_3' \in \Delta_{(i_3i_4)}$ such that
\[
\ms T_{i_1i_2i_4}^{f_1f_2'f_1'}, \ms T_{i_3i_2i_4}^{f_2f_2'f_3'}, \ms T_{i_1i_3i_4}^{f_3f_3'f_1'} \in \ms C_{S,i,j}.
\]
\end{enumerate}
In addition, assume the following about the $\ms C_{S,i,j}$.
\begin{enumerate} \setcounter{enumi}{2}
\item\label{e2spans} For each distinct $i$, $j$, $l \in [5]$ and $f_1 \in \Delta_{(ij)}$, there exist $k \in [5] \minus \{i,j,l\}$, $f_2 \in \Delta_{(jk)}$, and $f_3 \in \Delta_{(ki)}$ such that
\[
\ms T_{ijk}^{f_1f_2f_3} \in \ms C_{[5] \minus \{l\}, i, j}.
\]
\end{enumerate}
Let $\mc S_{\ms C}$ be the set of all triangulations $\ms T$ of $A$ satisfying the following.
\begin{enumerate}
\renewcommand{\theenumi}{(\roman{enumi})}
\renewcommand{\labelenumi}{\theenumi}
\item\label{e2contains} For each $S \in \binom{[5]}{4}$, distinct $i$, $j \in S$, and $\ms T_{i_1i_2i_3}^{f_1 f_2 f_3} \in \ms C_{S,i,j}$, we have
\[
\ms T_{i_1i_2i_3}^{f_1 f_2 f_3} \subseteq \ms T.
\]
\item\label{e2Contains} For each $S \in \binom{[5]}{4}$, distinct $i$, $j \in S$, and $\ms T_{ijk}^{f_1 f_2 f_3} \in \ms C_{S,i,j}$ where $k \in S \minus \{i,j\}$, if $\{l\} = [5] \minus S$, then for any $f \in \Delta_{(il)}$ we have
\[
X_{ijk}^{f_1 f_2 f_3} \minus\{(e_i,f_1)\} \cup \{(e_i,f)\} \in \ms T.
\]
\end{enumerate}
Then $\mc S_{\ms C}$ is closed under flips.
\end{lem}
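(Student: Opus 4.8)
The plan is to imitate the proof of Lemma~\ref{ensemble}: we must show that if $\ms T\in\mc S_{\ms C}$ and $\ms T'$ is obtained from $\ms T$ by a flip supported on a circuit $X=(X^+,X^-)$, then $\ms T'$ again satisfies \ref{e2contains} and \ref{e2Contains}. As in Section~4, this reduces to two statements which I would prove in the appendix. The first is a \emph{blocking} statement, analogous to Proposition~\ref{zonoblock}: a triangulation satisfying \ref{e2contains} and \ref{e2Contains} has no flip supported on any circuit $X_{i_1i_2i_3}^{f_1f_2f_3}$ with $\ms T_{i_1i_2i_3}^{f_1f_2f_3}\in\ms C_{S,i,j}$ for some $S,i,j$; call these circuits \emph{protected}. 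The second is a \emph{stability} statement, analogous to Proposition~\ref{zonoshift}: if $\ms T'$ is the result of a flip on $\ms T$ not supported on a protected circuit, then every zonotopal triangulation in the $\ms C_{S,i,j}$ remains a subset of $\ms T'$ and every simplex appearing in \ref{e2Contains} remains a cell of $\ms T'$. Granting these, the lemma follows at once: $X$ is not protected by the blocking statement, so the stability statement gives $\ms T'\in\mc S_{\ms C}$.

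I would prove the stability statement first, as it is local. Normalize so that $\ms T_X^+\subseteq\ms T$. The circuits of $A$ are the cycles of $K_{5,n}$, so $\abs{X^+}=\abs{X^-}\ge 2$, and therefore every cell of the link $\ms L$ of the flip is disjoint from $X$; consequently a cell $\mu\in\ms T$ is removed by the flip only if $X^-\subseteq\mu$, $X^+\not\subseteq\mu$, and $(\mu\minus X)\cup(X\minus\{p\})\in\ms T$ for each $p\in X^+$. Now if $\mu$ is a maximal simplex of a protected $\ms T_{i_1i_2i_3}^{f_1f_2f_3}$ (that is, $X_{i_1i_2i_3}^{f_1f_2f_3}$ with one positive vertex removed) or an interface simplex $X_{i_1i_2i_3}^{f_1f_2f_3}\minus\{(e_{i_1},f_1)\}\cup\{(e_{i_1},f)\}$ from \ref{e2Contains}, then $G(\mu)$ is a tree in $K_{5,n}$ on at most seven vertices, and I would check directly that unless $X=X_{i_1i_2i_3}^{f_1f_2f_3}$, adjoining the path $X\minus\{p\}$ to $G(\mu\minus X)$ for a suitable $p$ creates a cycle, so $(\mu\minus X)\cup(X\minus\{p\})$ is not a simplex and hence not in $\ms T$. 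Since $X$ is not a protected circuit, no such $\mu$ is removed, so $\ms T'$ still satisfies \ref{e2contains} and \ref{e2Contains}.

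The blocking statement is the main obstacle. Fix a protected circuit $Y:=X_{i_1i_2i_3}^{f_1f_2f_3}\in\ms C_{S,i,j}$, with $\{i_4\}=S\minus\{i_1,i_2,i_3\}$ and $\{l\}=[5]\minus S$, and suppose toward a contradiction that $\ms T$ has a flip on $Y$. By \ref{e2contains} we have $\ms T_Y^+=\ms T_{i_1i_2i_3}^{f_1f_2f_3}\subseteq\ms T$, so by Proposition~\ref{flipdef} the three maximal simplices of $\ms T_Y^+$, each $Y$ with one positive vertex removed, have a common link $\ms L$ in $\ms T$; I would derive a contradiction by producing two of them with distinct links. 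If $\ms L$ lies in the zonotope $\Pi_S$ generated by the segments $\{e_p,e_q\}$ with $p,q\in S$ (which contains $Y$), then the blockers $\ms T_{i_1i_2i_4}^{f_1f_2'f_1'}$, $\ms T_{i_2i_3i_4}^{f_2f_3'f_2'}$, $\ms T_{i_3i_1i_4}^{f_3f_1'f_3'}$ supplied by hypothesis~1 (and, when $i_1=i$ and $i_2=j$, their reversed orientations supplied by hypothesis~2) play exactly the role of the blocking triangulations in the proof of Proposition~\ref{zonoblock}, while hypothesis~\ref{e2spans} ensures these blockers actually occur in $\ms C$; this disposes of the ``internal'' case essentially as in Section~4. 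The new phenomenon, absent there, is that $\Pi_S$ is not a face of $A$, so $\ms L$ could reach into the edges $\Delta_{(pl)}$ toward the fifth index $l$; this is exactly what condition \ref{e2Contains} rules out. Indeed, the simplices $Y\minus\{(e_{i_1},f_1)\}\cup\{(e_{i_1},f)\}$ with $f\in\Delta_{(i_1l)}$ force $\ms L$ to contain the cell $\{(e_{i_1},f)\}$ for every such $f$, hence $Y\minus\{(e_{i_2},f_2)\}\cup\{(e_{i_1},f)\}$ and $Y\minus\{(e_{i_3},f_3)\}\cup\{(e_{i_1},f)\}$ must lie in $\ms T$; comparing these forced simplices with the zonotopal triangulation that the $\Pi_S$-blockers impose near $(e_{i_2},f_2)$ and $(e_{i_3},f_3)$ produces the contradiction. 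The remainder is bookkeeping with subgraphs of $K_{5,n}$, together with a check that hypotheses~1, 2 and~\ref{e2spans} jointly account for every orientation of $\ms T_Y^+$ and every choice of the distinguished pair $(i,j)$.
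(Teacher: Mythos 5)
Your overall architecture has a genuine gap: you split the work into (a) no flips on \emph{protected} circuits (those underlying members of some $\ms C_{S,i,j}$) and (b) a stability claim that a flip on any \emph{non-protected} circuit preserves properties \ref{e2contains} and \ref{e2Contains}. Statement (b) is false as a purely combinatorial fact, and your proposed verification of it fails exactly in the critical cases. Take $\ms T_{i_1i_2i_3}^{f_1f_2f_3} \in \ms C_{S,i,j}$ and its maximal simplex $\mu = X_{i_1i_2i_3}^{f_1f_2f_3}\minus\{(e_{i_1},f_1)\}$, and let $X$ be the $4$-circuit on $(e_{i_2},f_1),(e_{i_3},f_1),(e_{i_2},f_2),(e_{i_3},f_2)$, which is not protected (no $4$-circuit is). Then $\mu \supseteq X \minus \{(e_{i_3},f_1)\}$, a maximal simplex of $\ms T_X^+$ for a suitable orientation, and $(\mu\minus X)\cup(X\minus\{(e_{i_3},f_1)\})$ is just $\mu$ itself --- a simplex of $\ms T$ --- so no cycle is created and your ``direct check'' does not rule anything out: a flip on $X$ would delete $\mu$ and destroy \ref{e2contains}. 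The same happens for \ref{e2Contains}: the interface simplex $\sigma = X_{ijk}^{f_1f_2f_3}\minus\{(e_i,f_1)\}\cup\{(e_i,f)\}$ contains the maximal simplex $X_{jik}^{ff_3f_2}\minus\{(e_j,f)\}$ of the non-protected $6$-circuit $X_{jik}^{ff_3f_2}$ (non-zonotopal, since $f\in\Delta_{(il)}$), and again $(\sigma\minus X)\cup(X\minus\{(e_j,f)\})=\sigma$. So the dangerous circuits are not only the protected ones, and the lemma cannot be reduced to your dichotomy.

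What is actually needed, and what the paper does, is to show that a triangulation in $\mc S_{\ms C}$ admits \emph{no flip at all} on these additional circuits: Proposition~\ref{flip2} kills the $4$-circuits $X_{ij}^{f_1f_2}$, and this is where hypothesis~3 enters (it supplies a protected circuit through $f_1$ whose interface simplex from \ref{e2Contains} violates the flip criterion of Proposition~\ref{flip}); in your write-up hypothesis~3 is only invoked, incorrectly, as guaranteeing blockers for the protected circuits themselves. For the shifted circuits $X_{jik}^{ff_3f_2}$ one needs hypothesis~2 together with \ref{e2Contains} and the blocking Proposition~\ref{triblock}. Your proposal never establishes either of these facts, and the blocking argument you do sketch for protected circuits ends in ``comparing these forced simplices \dots\ produces the contradiction,'' which is precisely the nontrivial content (carried out in the paper's appendix via Propositions~\ref{grow}, \ref{circuitparts} and \ref{flip}). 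As written, the proof does not go through.
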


\begin{proof}
We need the following three general facts about triangulations of $A$. The first is true for all point sets $A$ and is proved in \cite{Liu16}. The other two are proved in the Appendix.

\begin{prop} \label{flip}
Let $\ms T$ be a triangulation of $A$ and let $X = (X^+,X^-)$ be a circuit in $A$. Suppose that $X^- \in \ms T$. Then $\ms T$ has a flip supported on $(X^+,X^-)$ if and only if there is no maximal simplex $\tau \in \ms T$ with $X^- \subseteq \tau$ and $\abs{X \cap \tau} \le \abs{X} - 2$.
\end{prop}

\begin{prop} \label{triblock}
Let $\ms T$ be a triangulation of $A$. Let $i$, $j$, $k$, $l \in [5]$ be distinct, and for each $\alpha \in \Gamma_{\{i,j,k,l\}}^2$, let $f_\alpha \in \Delta^{n-1}$. Suppose that
\[
\ms T_{jkl}^{f_{(jk)}f_{(kl)}f_{(lj)}}, \ms T_{kil}^{f_{(ki)}f_{(il)}f_{(lk)}} \subseteq \ms T
\]
and
\[
X_{ijl}^{f_{(ij)}f_{(jl)}f_{(li)}} \minus \{ (e_i, f_{(ij)}) \} \in \ms T.
\]
Then $\ms T$ does not have a flip supported on $X_{ijk}^{f_{(ij)}f_{(jk)}f_{(ki)}}$.
\end{prop}

\begin{prop} \label{trishift}
Let $\ms T$ be a triangulation of $A$ and let $\ms T'$ be the result of a flip on $\ms T$ supported on $X = (X^+,X^-)$. Suppose that $\sigma \in \ms T$ and $\sigma \notin \ms T'$, and $G(\sigma)$ is connected. Then $\sigma$ contains a maximal simplex of $\ms T_X^+$.
\end{prop}

We also need the following two facts about flips on elements of $\mc S_{\ms C}$.

\begin{prop} \label{flip2}
Let $i$, $j$, $l \in [5]$ be distinct and let $f_1 \in \Delta_{(ij)}$, $f_2 \in \Delta_{(il)}$. Let $\ms T \in \mc S_\ms C$. Then $\ms T$ does not have a flip supported on $X_{ij}^{f_1f_2}$.
\end{prop}

\begin{proof}
By Property~\ref{e2spans} of $\ms C_{S,i,j}$, there exist $k \in [5] \minus \{i,j,l\}$, $f_2' \in \Delta_{(jk)}$, and $f_3' \in \Delta_{(ki)}$ such that $\ms T_{ijk}^{f_1f_2'f_3'} \in \ms C_{[5] \minus \{l\},i,j}$. By property~\ref{e2Contains} of $\mc S_{\ms C}$, it follows that
\[
\sigma := X_{ijk}^{f_1f_2'f_3'} \minus \{(e_i,f_1)\} \cup \{(e_i,f_2)\} \in \ms T.
\]
If $\tau$ is a maximal simplex in $\ms T$ containing $\sigma$, then $(X_{ij}^{f_1f_2})^- \subseteq \tau$ but $\abs{X_{ij}^{f_1f_2} \cap \tau} = 2$. So by Proposition~\ref{flip}, $\ms T$ does not have a flip supported on $X_{ij}^{f_1f_2}$.
\end{proof}

\begin{prop} \label{flip3}
Let $\ms T_{i_1i_2i_3}^{f_1 f_2 f_3} \in \ms C_{S,i,j}$. Let $\ms T \in \mc S_\ms C$. Then $\ms T$ does not have a flip supported on $X_{i_1i_2i_3}^{f_1 f_2 f_3}$.
\end{prop}

\begin{proof}
This is a direct corollary of Property 1 of $\ms C_{S,i,j}$ and Proposition~\ref{triblock}.
\end{proof}

We now proceed with the proof. Suppose that $\ms T \in \mc S_\ms C$, and let $\ms T'$ be the result of a flip on $\ms T$ supported on $X = (X^+,X^-)$.
We prove that properties (i) and (ii) hold for $\ms T'$.

\paragraph{Property~\ref{e2contains}}
Suppose that $\ms T_{i_1i_2i_3}^{f_1 f_2 f_3} \in \ms C_{S,i,j}$ and $\ms T_{i_1i_2i_3}^{f_1 f_2 f_3} \not\subseteq \ms T'$. Without loss of generality, assume that $X_{i_1i_2i_3}^{f_1 f_2 f_3} \minus \{(e_{i_1},f_1)\} \notin \ms T'$. By Proposition~\ref{trishift}, $X_{i_1i_2i_3}^{f_1 f_2 f_3} \minus \{(e_{i_1},f_1)\}$ contains a maximal simplex of $\ms T_X^+$. This leaves two cases for $X$.
\begin{enumerate}

\item Case 1: $X$ has size 4. Thus we can write $X = X_{i_1'i_2'}^{f_1'f_2'}$ for some $i_1'$, $i_2' \in \{i_1,i_2,i_3\}$, $f_1' \in \Delta_{(i_1'i_2')}$, and $f_2' \in \Delta_{(i_1'i_3')}$, where $\{i_3'\} = \{i_1,i_2,i_3\} \minus \{i_1',i_2'\}$. However, this contradicts Proposition~\ref{flip2}. So we cannot have $\abs{X} = 4$.

\item Case 2: $X = X_{i_1i_2i_3}^{f_1 f_2 f_3}$. This contradicts Proposition~\ref{flip3}.

\end{enumerate}
Hence we must have $\ms T_{i_1i_2i_3}^{f_1 f_2 f_3} \subseteq \ms T'$, as desired.

\paragraph{Property~\ref{e2Contains}}
Suppose that $\sigma := X_{ijk}^{f_1 f_2 f_3} \minus\{(e_i,f_1)\} \cup \{(e_i,f)\} \notin \ms T'$, with variables as defined in \ref{e2Contains}. By Proposition~\ref{trishift}, $\sigma$ contains a maximal simplex of $X$. By the same argument as in part~\ref{e2contains}, we cannot have $\abs{X} = 4$ or $X = X_{ijk}^{f_1 f_2 f_3}$. This leaves
\[
X = X_{jik}^{ff_3f_2}
\]
as the only possibility.

We show that $\ms T$ cannot have a flip on this circuit. Let $l' = S \minus \{i,j,k\}$. By Property~2 of $\ms C_{S,i,j}$, there exist $f_1' \in \Delta_{(il')}$, $f_2' \in \Delta_{(jl')}$, $f_3' \in \Delta_{(kl')}$ such that
\begin{equation} \label{shiftedblock}
\ms T_{ijl'}^{f_1f_2'f_1'}, \ms T_{kjl'}^{f_2f_2'f_3'}, \ms T_{ikl'}^{f_3f_3'f_1'} \in \ms C_{S,i,j}.
\end{equation}
By Property \ref{e2Contains}, the first of these inclusions implies that
\[
X_{ijl'}^{f_1f_2'f_1'} \minus \{(e_i,f_1)\} \cup \{(e_i,f)\} \in \ms T
\]
and hence
\[
X_{jil'}^{ff_1'f_2'} \minus \{(e_j,f)\} \in \ms T.
\]
This inclusion along with the last two inclusions of \eqref{shiftedblock} imply, by Proposition~\ref{triblock}, that $\ms T$ does not have a flip on $X_{jik}^{ff_3f_2}$, as desired.
\end{proof}

\subsection{Reduction to zonotopes}

Let
\[
\Pi := \bigcup_{(ij) \in \Gamma_5^2} \bigcup_{f \in \Delta_{(ij)}} \{ (e_i, f), (e_j,f) \}
\]
be the 4-permutohedron embedded in $A$. Suppose we have collections $\ms C_{S,i,j}$ which satisfy the conditions of Lemma~\ref{ensemble2}. Let $\ms T$ be a triangulation of $\Pi$. Notice that if properties \ref{e2contains} and \ref{e2Contains} of Lemma~\ref{ensemble2} hold for $\ms T$, then they hold for any collection containing $\ms T$. In particular, if $\ms T$ can be extended to a triangulation $\ms T'$ of $A$, then we will have some $\ms T' \in \mc S_{\ms C}$. The next Proposition guarantees we can always do this.

\begin{prop} \label{extensiontoA}
If $\ms T$ is a triangulation of $\Pi$, then there is a triangulation $\ms T'$ of $A$ with $\ms T \subseteq \ms T'$.
\end{prop}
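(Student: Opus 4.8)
The plan is to build $\ms T'$ from $\ms T$ by adjoining the points of $A \minus \Pi$ one at a time via the classical beneath--beyond (placing) construction. The tool is the standard placing lemma (see \cite[Chapter 4]{DRS}): if $\mc B$ is a finite point set, $\ms S$ a triangulation of $\mc B$, and $p \notin \conv(\mc B)$, then adjoining to $\ms S$ all cells $\{p\} \cup \sigma$ together with their faces, where $\sigma$ ranges over the simplices of $\ms S$ lying on a facet of $\conv(\mc B)$ visible from $p$, produces a triangulation of $\mc B \cup \{p\}$ that \emph{contains} $\ms S$. The key feature of placing is that adjoining a point lying strictly outside the current convex hull only adds new simplices; it never subdivides existing ones, so no cell of $\ms S$ is destroyed. (If one wishes to avoid citing this, it can be reproved in the appendix.)

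The input special to our setting is the following: for every $p \in A$ and every $\mc B \subseteq A$ with $p \notin \mc B$, we have $p \notin \conv(\mc B)$. To see this, write $p = (e_i, f)$ with $i \in [5]$ and $f \in \Delta^{n-1}$. A convex combination of points of $A$ equal to $(e_i, f)$ must be supported only on the points $(e_1, f), \dots, (e_5, f)$, since the second coordinates of the points of $A$ are distinct standard basis vectors of $\bb R^n$; but $(e_1, f), \dots, (e_5, f)$ are affinely independent, so all the weight lies on $(e_i, f)$ itself. Equivalently, every point of $A = \Delta^4 \times \Delta^{n-1}$ is a vertex of $\conv(A)$, hence is not a convex combination of the other points of $A$.

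With these two facts the proof is a short induction. I would enumerate $A \minus \Pi = \{p_1, \dots, p_M\}$ in an arbitrary order and set $\ms T_0 := \ms T$, a triangulation of $\Pi$. Given a triangulation $\ms T_{r-1}$ of $\Pi \cup \{p_1, \dots, p_{r-1}\}$ with $\ms T \subseteq \ms T_{r-1}$, the point $p_r$ does not lie in $\Pi \cup \{p_1, \dots, p_{r-1}\}$, so by the previous paragraph $p_r \notin \conv(\Pi \cup \{p_1, \dots, p_{r-1}\})$; applying the placing lemma yields a triangulation $\ms T_r$ of $\Pi \cup \{p_1, \dots, p_r\}$ with $\ms T \subseteq \ms T_{r-1} \subseteq \ms T_r$. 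After $M$ steps, $\ms T' := \ms T_M$ is a triangulation of all of $A$ with $\ms T \subseteq \ms T'$, as required.

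The only delicate point is the second paragraph: the induction collapses unless each newly inserted point lies outside the convex hull of everything assembled so far, since otherwise placing would be forced to cut through an existing simplex and $\ms T$ would not be preserved. That every point of $A$ is a vertex of $\conv(A)$ — a feature of products of simplices — is exactly what supplies this, and it lets the one-point-at-a-time argument run with no hypotheses on the order of insertion.
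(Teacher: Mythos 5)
Your proof is correct, but it takes a genuinely different route from the paper. The paper proves the extension by exhibiting an explicit coherent subdivision $\ms S_A^\omega$ of $A$ (heights $0$ on the two endpoints of each generator of $\Pi$, generic small positive heights $\epsilon_{f,k}$ on the remaining points), in which $\Pi$ appears as a single cell and every other cell is of the form $F \cup D$ with $F$ a face of $\Pi$ and $D$ an affinely independent simplex; any triangulation of the cell $\Pi$ then extends to a refinement of this fixed coarse subdivision. You instead use the beneath--beyond (placing) lemma and the observation that every point of $\Delta^4 \times \Delta^{n-1}$ is a vertex of its convex hull, so each point of $A \minus \Pi$ lies strictly outside the hull of whatever has been assembled so far and can be placed without disturbing existing cells; your key geometric input (convex position of $A$) is verified correctly, and since $\conv(\Pi)$ is already full-dimensional in $\conv(A)$ there are no degenerate-dimension issues in the placing steps. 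Your argument actually proves the stronger, purely general statement that any triangulation of any subset of a point configuration in convex position extends to a triangulation of the whole configuration, independent of the insertion order, at the cost of importing (or reproving) the placing lemma from \cite{DRS}; the paper's argument stays inside the regular-subdivision machinery it has already set up (Theorem~\ref{fans}) and yields an explicit description of the cells of the extension, which is in the same spirit as the constructions used elsewhere in Section 5.
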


\begin{proof}
Let $\ms S_A := \ms S_A^\omega$ be the regular subdivision of $A$ with $\omega : A \to \bb R$ defined as follows. For each distinct $i$, $j$, $k \in [5]$ and $f \in \Delta_{(ij)}$, let $\epsilon_{f,k} > 0$ be a generic positive real number, and define
\[
\omega(e_i,f) = 0 \qquad \omega(e_j,f) = 0 \qquad \omega(e_k,f) = \epsilon_{f,k}.
\]
Then $\ms S_A$ contains $\Pi$ as a cell, corresponding to $x = 0$ under the notation of Theorem~\ref{fans}. Since the $\epsilon_{f,k}$ are generic, all other cells of $\ms S_A$ can be written as $F \cup D$, where $F$ is a face of $\Pi$ and $D$ is a simplex affinely independent to $F$. Hence, any triangulation of the cell $\Pi$ can be extended to a refinement of $\ms S_A$ which is a triangulation of $A$.
\end{proof}

\subsection{Construction of a zonotopal tiling}

We now construct a triangulation of $\Pi$ from which we will obtain our collections $\ms C_{S,i,j}$. For each $(ij) \in \Gamma_5^2$, partition $\Delta_{(ij)}$ into the following sets: $\Delta_{i,j}$, $\Delta_{j,i}$, and
\[
\Delta_{S,i',j',(ij)} \text{ for each } S \in \binom{[5]}{4} \text{ with } i,j, \in S \text{ and distinct } i', j' \in S \text{ with } \{i',j'\} \neq \{i,j\}.
\]
Next, for each distinct $i$, $j \in [5]$, choose an element $f_{i,j}^\ast \in \Delta_{i,j}$. For each $S \in \binom{[5]}{4}$ with $i,j \in S$, let $\Delta_{S,i,j,(ij)} \subseteq \Delta_{i,j}$ be sets such that
\[
\Delta_{i,j} = \bigcup_{\substack{S \in \binom{[5]}{4} \\ i,j \in S}} \Delta_{S,i,j,(ij)}
\]
and
\[
\Delta_{S,i,j,(ij)} \cap \Delta_{S',i,j,(ij)} = \{f_{i,j}^\ast\} \text{ for each distinct } S, S' \in \binom{[5]}{4}, i,j \in S, S'.
\]
The sizes of all of these sets will be determined later.

Now, we let $\ms S := \ms S_\Pi^\omega$ be the regular subdivision of $\Pi$ with $\omega : \Pi \to \bb R$ defined as follows. First, for every distinct $i$, $j \in [5]$, we set
\begin{alignat*}{3}
\omega(e_i,f) &= 0 \qquad& \omega(e_j,f) &= 1 \qquad& \text{if } f \in \Delta_{i,j}.
\end{alignat*}
Finally, for each $S \in \binom{[5]}{4}$ and distinct $i$, $j$, $k \in S$, let $0 < \epsilon_{S,i,j,k} < 1$ be a generic real number. We set
\begin{alignat*}{3}
\omega(e_i,f) &= 0 \qquad& \omega(e_k,f) &= \epsilon_{S,i,j,k} \qquad&& \text{if } f \in \Delta_{S,i,j,(ik)} \text{ and } k \neq j \\
\omega(e_k,f) &= \epsilon_{S,i,j,k} \qquad& \omega(e_j,f) &= 1 \qquad&& \text{if } f \in \Delta_{S,i,j,(kj)} \text{ and } k \neq i \\
\omega(e_k,f) &= \epsilon_{S,i,j,k} \qquad& \omega(e_{k'},f) &= \epsilon_{S,i,j,k'} \qquad&& \text{if } f \in \Delta_{S,i,j,(kk')} \text{ and } k,k' \neq i,j.
\end{alignat*}

We analyze the cells of $\ms S$. For each $S \in \binom{[5]}{4}$ and distinct $i$, $j \in S$, let
\[
\Pi_{S,i,j} := \bigcup_{(i'j') \in \Gamma_S^2} \bigcup_{f \in \Delta_{S,i,j,(i'j')}} \{(e_{i'},f),(e_{j'},f)\}
\]
and
\[
P_{S,i,j} := \bigcup_{f \in \Delta_{S,i,j,(ij)}} \{(e_i,f),(e_j,f)\}.
\]
In addition, for each $k \in S \minus \{i,j\}$, let
\[
\Xi_{S,i,j,k} := \bigcup_{(i'j') \in \Gamma_{\{i,j,k\}}^2} \bigcup_{f \in \Delta_{S,i,j,(i'j')}} \{(e_{i'},f),(e_{j'},f)\}.
\]

\begin{prop} \label{tricells}
Every cell of $\ms S$ is of the form $C \cup D$, where $D$ is a simplex affinely independent to $C$ and $C$ is a face of one of the following.
\begin{enumerate} \renewcommand{\labelenumi}{(\alph{enumi})}
\item $\Pi_{S,i,j} \cup \Xi_{S',i,j,k} \cup P_{S'',i,j}$ where $S$, $S'$, $S'' \in \binom{[5]}{4}$ are distinct and contain $i$, $j$ and $\{k\} = [5] \minus S$.
\item $\Xi_{S,i,j,k} \cup \Xi_{S',i,j,k'} \cup \Xi_{S'',i,j,k''}$ where $S$, $S'$, $S'' \in \binom{[5]}{4}$ are distinct and contain $i$, $j$ and $k \in S \minus \{i,j\}$, $k' \in S' \minus \{i,j\}$, and $k'' \in S'' \minus \{i,j\}$ are distinct.
\end{enumerate}
We will call the cells in (a) and (b) the \emph{complex} cells of $\ms S$.
\end{prop}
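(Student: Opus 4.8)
The plan is to describe the maximal cells of $\ms S = \ms S_\Pi^\omega$ one at a time using the Develin--Sturmfels description (Theorem~\ref{fans}), and to use that the asserted normal form passes to faces. Since $\Pi$ is basic, Theorem~\ref{fans} identifies the cells of $\ms S$: writing $A_f = \{e_a,e_b\}$ for the pair with $f \in \Delta_{(ab)}$, a mixed cell $\sum_f B_f$ lies in $\ms S$ iff there is an $x \in \bb R^5$ with $B_f \in \{\emptyset,\ \type(x,\omega,A_f)\}$ for every $f$. A cell is maximal exactly when it is $4$-dimensional; since $B_f = A_f$ contributes the direction $e_a - e_b$, this forces $B_f = A_f$ precisely for the $f$ whose wall $W_f := \{x : x_a - \omega(e_a,f) = x_b - \omega(e_b,f)\}$ contains $x$, these directions spanning rank $4$. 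Thus maximal cells correspond to vertices $x$ of the arrangement $\{W_f\}_f$, the cell being $C(x) := \bigcup_f \{(e,f) : e \in \type(x,\omega,A_f)\}$ --- a ``pair'' $\{(e_a,f),(e_b,f)\}$ for each $f$ with $x \in W_f$, and a single point $(e,f)$ otherwise. Because every cell of a subdivision is a face of a maximal cell, and a face of ``$C\cup D$ with $D$ a simplex affinely independent to $C$ and $C$ a face of $Z$'' is again of this shape (faces of $Z$ are faces of $Z$, affine independence is inherited by subsets), it suffices to prove the claim for each $C(x)$.

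Next I would classify the vertices $x$. The walls fall into two families: ``integer'' walls $x_a - x_b = \pm 1$, from the $0/1$ weights (the $f \in \Delta_{a,b}\cup\Delta_{b,a}$, and the $\Delta_{S,i,j,(ij)}\subseteq\Delta_{i,j}$); and ``$\epsilon$''-walls at generic heights near $0$ or near $\mp 1$, from the $\epsilon_{S,i,j,k}$ weights. Genericity of the $\epsilon_{S,i,j,k}$ means that the only linear relations holding among them, their pairwise differences, and $0,\pm 1$ are the identities built into the definitions of $\Pi_{S,i,j}$, $\Xi_{S,i,j,k}$ and $P_{S,i,j}$ (for instance $(-\epsilon_{S,i,j,k}) + (\epsilon_{S,i,j,k}-1) = -1$). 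Organizing the vertices by which integer walls are active and how the active $\epsilon$-walls cluster around the five coordinates, one checks that every vertex $x$ is obtained by pinning all five coordinates using the wall system of one such configuration together with a face of it: a $2$-coordinate $\epsilon$-cluster packaged with a $1$-coordinate one yields a vertex whose $C(x)$ is the configuration (a) (all three $4$-subsets through $i,j$ occur, so the $(ij)$-parts exhaust $\Delta_{i,j}$), while three single-coordinate $\epsilon$-clusters yield configuration (b); smaller clusters and mixtures with integer walls yield faces of these. In every case the set of pair directions of $C(x)$ is exactly the pair-direction set of the relevant configuration (or a face), so $C(x)$ equals that configuration (or a face of it) together with the single points $(e,f)$ for the remaining $f$.

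Finally, for each wall-avoiding $f$, $\type(x,\omega,A_f)$ is a single vertex $e$, so the bipartite graph $G$ of these single points is a forest each of whose components attaches to $G$ of the pair part at a single vertex; hence this collection of single points is a simplex affinely independent to the pair part, giving the required $D$, and the pair part is the configuration from (a) or (b) or a face of it. The crux will be the vertex classification in the second step: I must rule out ``accidental'' vertices --- an integer wall crossing an $\epsilon$-cluster centred at an unrelated coordinate, two $\epsilon$-clusters interacting, or a cluster containing more $\epsilon$-walls than expected --- any of which could a priori produce a maximal cell off the list. This is exactly where genericity of the $\epsilon_{S,i,j,k}$ is used, and it must be invoked separately for each kind of potential coincidence; keeping the bookkeeping straight over the $\binom{5}{4}$ choices of $S$ and the roles of $i$, $j$, $k$ inside $S$ is the main technical burden. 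Once the vertices are enumerated, matching each $C(x)$ to the appropriate face of (a) or (b) and checking affine independence of $D$ is routine.
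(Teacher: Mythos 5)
Your proposal follows essentially the same route as the paper: apply the Develin--Sturmfels characterization (Theorem~\ref{fans}), identify the points $x$ whose types produce the cells in (a) and (b) (namely $x_i=0$, $x_j=1$ with the remaining coordinates set to $\epsilon$-values clustered by the choice of $S$), and invoke genericity of the $\epsilon_{S,i,j,k}$ to conclude that every other cell is a face of one of these together with an affinely independent simplex. The paper's proof is exactly this and likewise leaves the exhaustive coincidence-check to the reader, so your outline is correct and matches it, with your reduction to maximal cells and the wall/cluster bookkeeping simply spelling out the details the paper omits.
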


\begin{proof}
Using the notation of Theorem~\ref{fans}, the cell in (a) corresponds to $x \in \bb R^5$ with $x_i = 0$, $x_j = 1$, $x_l = \epsilon_{S,i,j,l}$ for each $l \in S \minus \{i,j\}$, and $x_k = \epsilon_{S',i,j,k}$. The cell in (b) corresponds to $x \in \bb R^5$ with $x_i = 0$, $x_j = 1$, $x_k = \epsilon_{S,i,j,k}$, $x_{k'} = \epsilon_{S',i,j,k'}$, and $x_{k''} = \epsilon_{S'',i,j,k''}$. Due to the genericness of the $\epsilon_{S,i,j,k}$, it can be checked that every cell of $\ms S$ is the union of a face of one of these cells and an affinely independent simplex; we leave the details to the reader.
\end{proof}

Thus, in order to give a triangulation of $\Pi$ which refines $\ms S$, it suffices to specify triangulations of the complex cells of $\ms S$ which agree on common faces. To do this, we first specify triangulations of each $\Pi_{S,i,j}$. We will then use a ``pseudoproduct'' operation to extend these to triangulations of the complex cells.

Fix $S \in \binom{[5]}{4}$ and distinct $i$, $j \in S$. Let $\tilde\Pi$ be the large 3-permutohedron defined in equation \eqref{P3}. Let $\psi : [4] \to S$ be a map such that $\psi(1) = i$ and $\psi(2) = j$. We now choose the sizes of the $\Delta_{S,i,j,(i'j')}$ so that we have an affine isomorphism $\Psi : \tilde\Pi \to \Pi_{S,i,j}$ given by $e_k \mapsto e_{\psi(k)}$ for all $k \in [4]$ and so that $\{f_{kl}^r\}_{-N \le r \le N}$ maps bijectively to $\Delta_{S,i,j,(\psi(k)\psi(l))}$. We will define $\Psi : \tilde\Pi \to \Pi_{S,i,j}$ so that $f_{12}^{-N}$ maps to $f_{i,j}^\ast$.

Let $\tilde{\ms T}$ and $\tilde{\ms C}$ be the triangulation of $\tilde\Pi$ and the collection of triangulations, respectively, constructed in Section~\ref{zonoconstruction} which satisfy Proposition~\ref{probably}. Let $\ms T_{S,i,j}$ and $\tilde{\ms C}_{S,i,j}$ be the images of $\tilde{\ms T}$ and $\tilde{\ms C}$, respectively, under $\Psi$. We thus have a triangulation $\ms T_{S,i,j}$ of each $\Pi_{S,i,j}$.

To extend the $\ms T_{S,i,j}$ to triangulations of the complex cells, we use the following ``ordered pseudoproduct'' construction. It is proved in the Appendix.

\begin{prop} \label{glue}
Let $\Pi_1$, $\Pi_2$, \dots, $\Pi_N$, and $\rho$ be cells of $\Pi$ such that $\rho = \{(e_i,f),(e_j,f)\}$ for some distinct $i$, $j \in [5]$ and $f \in \Delta_{(ij)}$, and for any $1 \le r,s \le N$, we have $G(\Pi_r) \cap G(\Pi_s) = G(\sigma)$. Let $\ms T_1$, \dots, $\ms T_N$ be triangulations of $\Pi_1$, \dots, $\Pi_N$, respectively. Let $\ms M$ be the set of all simplices $\sigma \in A$ of the following form: There is an integer $1 \le s \le N$ such that
\[
\sigma = \left( \bigcup_{r < s} (\sigma_r \minus \rho) \right) \cup \sigma_s \cup \left( \bigcup_{r > s} \sigma_r \right)
\]
where
\begin{itemize}
\item If $r < s$, then $\sigma_r$ is a maximal simplex of $\ms T_r$ with $\rho \subseteq \sigma_r$.
\item $\sigma_s$ is a maximal simplex of $\ms T_s$, and if $s < N$, then $\rho' := \sigma_s \cap \rho \neq \rho$.
\item If $r > s$, then $\sigma_r$ is a maximal simplex of $\ms T_r[F_r]$, where $F_r$ is a facet of $\Pi_r$ with $F_r \cap \rho = \rho'$.
\end{itemize}
Then $\ms M$ is the set of maximal simplices of a triangulation $\ms T(\ms T_1,\dotsb,\ms T_N)$ of $\Pi_1 \cup \dotsb \cup \Pi_N$, and $\ms T_r \subseteq \ms T(\ms T_1,\dotsb,\ms T_N)$ for all $1 \le r \le N$.
\end{prop}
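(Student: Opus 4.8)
The plan is to induct on $N$, gluing on one block at a time. For $N=1$ there is nothing to prove, since $\ms M$ is then the set of maximal simplices of $\ms T_1$. For the inductive step put $\Pi'' := \Pi_1 \cup \dotsb \cup \Pi_{N-1}$ and let $\ms T'' := \ms T(\ms T_1,\dotsb,\ms T_{N-1})$ be the triangulation furnished by the inductive hypothesis, so that $\ms T_r \subseteq \ms T''$ for every $r < N$. First I would record the geometry: since $G(\Pi_r) \cap G(\Pi_s) = G(\rho)$ as graphs for $r \ne s$, distinct blocks share in $G$ only the three vertices of $G(\rho)$; together with the fact that $\rho$ is a face of each block, this gives $\Pi'' \cap \Pi_N = \rho$, that $\rho$ is a face of both $\Pi''$ and $\Pi_N$, that $\conv(\Pi'') \cap \conv(\Pi_N) = \conv(\rho)$ and $\operatorname{aff}(\Pi'') \cap \operatorname{aff}(\Pi_N) = \operatorname{aff}(\rho)$, and hence $\dim\conv(\Pi''\cup\Pi_N) = \dim\conv(\Pi'') + \dim\conv(\Pi_N) - 1$.

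Next I would rewrite the formula defining $\ms M$ in terms of $\ms T''$ and $\ms T_N$. Splitting the elements of $\ms M$ according to whether the pivot $s$ equals $N$, and using that a maximal simplex of $\ms T''$ either contains $\rho$ --- these are exactly the sets $\sigma_1 \cup \dotsb \cup \sigma_{N-1}$ with each $\sigma_r$ a maximal simplex of $\ms T_r$ containing $\rho$ --- or meets $\rho$ in a single vertex $\rho' \subsetneq \rho$, one checks that $\ms M$ is the union of the two families
\begin{gather*}
\bigl\{\, (\tau \minus \rho) \cup \sigma_N \;:\; \tau \in \ms T'' \text{ maximal},\ \rho \subseteq \tau,\ \sigma_N \in \ms T_N \text{ maximal} \,\bigr\} \quad\text{and}\\
\bigl\{\, \tau \cup \sigma_N \;:\; \tau \in \ms T'' \text{ maximal},\ \tau \cap \rho = \rho' \subsetneq \rho,\ \sigma_N \in \ms T_N[F]\text{ maximal for a facet }F\text{ of }\Pi_N\text{ with }F \cap \rho = \rho' \,\bigr\}.
\end{gather*}
Hence it suffices to prove, and iterate, the following one-step statement: if $P_1$, $P_2$ are cells meeting in a common face $\rho = \{(e_i,f),(e_j,f)\}$ with $\operatorname{aff}(P_1)\cap\operatorname{aff}(P_2) = \operatorname{aff}(\rho)$, and $\ms S_1$, $\ms S_2$ are triangulations of $P_1$, $P_2$, then the two families above --- with $\ms T''$, $\ms T_N$, $\Pi_N$ replaced by $\ms S_1$, $\ms S_2$, $P_2$ --- form the set of maximal simplices of a triangulation of $\conv(P_1\cup P_2)$ refining both $\ms S_1$ and $\ms S_2$.

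To prove the one-step statement I would check three points. (i) Each listed $\sigma$ is a simplex of full dimension: passing to graphs, $G(\sigma)$ is the edge-disjoint union of a subforest of $G(\tau)$ (with the edges of $G(\rho)$ deleted, in the first family) and the forest $G(\sigma_N)$, meeting only at some of the three vertices of $G(\rho)$; a short argument using that $f$ has degree at most $2$ (so its only possible neighbours in any block are $e_i$ and $e_j$) shows $G(\sigma)$ is acyclic, and an edge count against the dimension formula shows it is a spanning forest of $G(P_1\cup P_2)$. (ii) The polytopes $\conv(\sigma)$ cover $\conv(P_1\cup P_2)$, have pairwise non-overlapping interiors, and intersect in common faces. (iii) $\ms S_1\subseteq\ms T(\ms S_1,\ms S_2)$ because every maximal simplex of $\ms S_1$ is contained in some $\tau\cup\sigma_N$ from the second family (or a first-family simplex, if it contains $\rho$), and $\ms S_2\subseteq\ms T(\ms S_1,\ms S_2)$ because every maximal simplex $\sigma_N$ of $\ms S_2$ is contained in $(\tau\minus\rho)\cup\sigma_N$ for any maximal $\tau\in\ms S_1$ with $\rho\subseteq\tau$.

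The hard part is (ii). I would attack it by attaching to each $x\in\conv(P_1\cup P_2)$ its ``staircase data'': using $\operatorname{aff}(P_1)\cap\operatorname{aff}(P_2)=\operatorname{aff}(\rho)$, one decomposes $x$ relative to $\operatorname{aff}(P_1)$, $\operatorname{aff}(P_2)$ and the line $\operatorname{aff}(\rho)$, and reads off uniquely the pivot, the vertex $\rho'$ (when relevant), and the pieces $\tau$ (lying in $\ms S_1$) and $\sigma_N$ (lying in $\ms S_2$ or in a facet of $P_2$) whose simplex contains $x$ in its relative interior; this gives the disjointness-of-interiors and covering statements, and the common-face statement then follows from the analogous properties of $\ms S_1$, $\ms S_2$ and the face lattice of the segment $\rho$. (In the special case where $\ms S_1$ and $\ms S_2$ are regular, one can alternatively realize $\ms T(\ms S_1,\ms S_2)$ as a coherent triangulation via a height function that restricts to ones realizing $\ms S_1$ and $\ms S_2$ and agreeing on $\rho$, with the heights over $P_2\minus\rho$ perturbed downward; but the proposition allows arbitrary triangulations.) The delicate bookkeeping is in tracking which facets of $\conv(P_1)$ contain $\rho$ and get ``absorbed'' by $P_2\minus\rho$, and in treating the two cases $\rho'=\{(e_i,f)\}$ and $\rho'=\{(e_j,f)\}$ uniformly.
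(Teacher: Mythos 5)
Your reduction is reasonable and genuinely different in organization from the paper: the paper does not induct or glue two pieces at a time, but verifies, for all $N$ blocks simultaneously, the two combinatorial criteria of Rambau's theorem (no circuit with its two halves in two cells of $\ms M$, and every facet of a cell of $\ms M$ is either shared with another cell or lies in a facet of $\Pi_1 \cup \dotsb \cup \Pi_N$), the second criterion being certified by explicit supporting linear functionals in a long case analysis. Your rewriting of $\ms M$ as the two families built from $\ms T(\ms T_1,\dotsc,\ms T_{N-1})$ and $\ms T_N$ is correct (a maximal simplex of the $(N-1)$-fold pseudoproduct meets $\rho$ in either all of $\rho$ or a single vertex, and the pivot's intersection with $\rho$ equals the whole simplex's intersection with $\rho$), so the induction would indeed follow from your one-step statement.

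The gap is that the one-step statement is exactly where all the difficulty sits, and your item (ii) is not an argument but a restatement of the goal. Saying that the ``staircase data'' of a point $x$ ``reads off uniquely the pivot, the vertex $\rho'$, and the pieces $\tau$ and $\sigma_N$'' is precisely the covering-and-disjointness assertion to be proved; you give no mechanism for it. In particular you do not address (a) why the mixed simplices fill all of $\conv(P_1 \cup P_2)$, which is strictly larger than $\conv(P_1) \cup \conv(P_2)$, (b) why a family-1 cell and a family-2 cell, or two family-2 cells attached to different facets $F$, $F'$ of $P_2$ with $F \cap \rho = F' \cap \rho = \rho'$, have disjoint interiors and meet in common faces, and (c) the boundary analysis (which facets of a mixed simplex lie on the boundary of $\conv(P_1\cup P_2)$ versus being shared), which is the bulk of the paper's proof and has no counterpart in your outline. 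This is why the paper invokes Rambau's combinatorial characterization: it converts (ii) into checkable conditions and avoids any direct point-by-point geometric decomposition. A secondary issue: your one-step lemma assumes only that $\rho$ is a common face with $\operatorname{aff}(P_1) \cap \operatorname{aff}(P_2) = \operatorname{aff}(\rho)$, but your argument in (i) (and any circuit analysis in (ii)) needs $G(P_1) \cap G(P_2) = G(\rho)$, which does not follow from the affine-hull condition; it does hold for $P_1 = \Pi_1 \cup \dotsb \cup \Pi_{N-1}$, $P_2 = \Pi_N$ in your induction, so you should carry that hypothesis along explicitly. As it stands, the proposal is a plausible strategy with the central verification missing, not a proof.
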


Now, fix distinct $i$, $j \in [5]$, and specify an ordering $S_1$, $S_2$, $S_3$ of the sets $S \in \binom{[5]}{4}$ which contain $i$ and $j$. Each complex cell is of the form $F_1 \cup F_2 \cup F_3$ where for each distinct $1 \le r,s \le 3$, $F_r$ is a face of $\Pi_{S_r,i,j}$, and $G(F_r) \cap G(F_s) = G(\rho)$ where
\[
\rho := \{(e_i,f_{i,j}^\ast),(e_j,f_{i,j}^\ast)\}.
\]
By Proposition~\ref{glue}, we thus have a triangulation $\ms T( \ms T_{S_1,i,j}[F_1], \ms T_{S_2,i,j}[F_2], \ms T_{S_3,i,j}[F_3] )$ of $F_1 \cup F_2 \cup F_3$. We leave it as an exercise to verify that all of these triangulations of the complex cells agree on common faces.\footnote{The argument is the same as the proof of Property 1 in the proof Proposition~\ref{glue}.} Hence, we can extend these triangulations of the complex cells to a triangulation $\ms T$ of $\Pi$. By Proposition~\ref{glue}, this triangulation $\ms T$ contains all the triangulations $\ms T_{S,i,j}$ as subcollections.

For each $S \in \binom{[5]}{4}$ and distinct $i$, $j \in [n]$, let $\ms D_{S,i,j}$ be the set of all zonotopal triangulations of the form $\ms T_{ijk}^{f_1f_2f_3}$ such that $k \in S \minus \{i,j\}$ and $\ms T_{ijk}^{f_{i,j}^\ast f_2f_3} \in \tilde{\ms C}_{S,i,j}$. Let
\[
\ms C_{S,i,j} := \tilde{\ms C}_{S,i,j} \cup \ms D_{S,i,j}.
\]
We now finally show that the assumptions of Lemma~\ref{ensemble2} hold for $\ms C_{S,i,j}$ and $\ms T$.

\begin{prop}
The collections $\ms C_{S,i,j}$ satisfy Properties 1--3 of Lemma~\ref{ensemble2}.
\end{prop}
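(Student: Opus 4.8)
The plan is to transfer the properties of $\tilde{\ms C}$ recorded in Proposition~\ref{probably} --- together with the hypothesis of Lemma~\ref{ensemble}, which $\tilde{\ms C}$ satisfies --- through the affine isomorphisms $\Psi\colon\tilde\Pi\to\Pi_{S,i,j}$. By construction $\Psi$ sends $e_k\mapsto e_{\psi(k)}$, maps $\{f_{kl}^r\}_{-N\le r\le N}$ bijectively onto $\Delta_{S,i,j,(\psi(k)\psi(l))}$, and sends $f_{12}^{-N}$ to $f_{i,j}^\ast$; hence it carries a circuit $X_{abc}^{pqr}$ of $\tilde\Pi$ and its triangulation $\ms T_{abc}^{pqr}$ to the zonotopal circuit $X_{\psi(a)\psi(b)\psi(c)}^{\Psi(f^p_{ab})\Psi(f^q_{bc})\Psi(f^r_{ca})}$ and the corresponding zonotopal triangulation, and it carries $\tilde{\ms C}$ to $\tilde{\ms C}_{S,i,j}$. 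I will use two elementary bookkeeping facts repeatedly: the index word of a zonotopal triangulation may be cyclically rotated without changing it, and the minus-orientation triangulation of $X_{abc}^{g_1g_2g_3}$ equals $\ms T_{acb}^{g_3g_2g_1}$. Together these let me rewrite every triangulation produced below into the exact cyclic form demanded by Lemma~\ref{ensemble2}.

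Property~3 is immediate: given distinct $i,j,l\in[5]$ and $f_1\in\Delta_{(ij)}$, put $S:=[5]\minus\{l\}$ (so $i,j\in S$), apply Proposition~\ref{probably}(A) to $\tilde{\ms C}$ with triple $(1,2,3)$ and first index $-N$ to get $\ms T_{123}^{(-N)\sigma\tau}\in\tilde{\ms C}$, and push forward by $\Psi$ to obtain $\ms T_{ij\psi(3)}^{f_{i,j}^\ast f_2 f_3}\in\tilde{\ms C}_{S,i,j}$ with $f_2\in\Delta_{S,i,j,(j\psi(3))}$ and $f_3\in\Delta_{S,i,j,(\psi(3)i)}$; setting $k:=\psi(3)\in S\minus\{i,j\}\subseteq[5]\minus\{i,j,l\}$, the definition of $\ms D_{S,i,j}$ (replace the first index $f_{i,j}^\ast$ by the given $f_1$) yields $\ms T_{ijk}^{f_1f_2f_3}\in\ms D_{S,i,j}\subseteq\ms C_{S,i,j}$.

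For Property~1, fix $\ms T_{i_1i_2i_3}^{f_1f_2f_3}\in\ms C_{S,i,j}$ and let $i_4$ be the remaining element of $S$. If it lies in $\tilde{\ms C}_{S,i,j}$, pull it back by $\Psi$ to $\ms T_{k_1k_2k_3}^{rst}\in\tilde{\ms C}$ (with $\psi(k_m)=i_m$), apply the ``in particular'' clause of Proposition~\ref{probably}(B) with $\gamma=(k_1k_2k_3)$ to get $u,v,w$ with $\ms T_{k_1k_2k_4}^{rv(-u)},\ms T_{k_2k_3k_4}^{sw(-v)},\ms T_{k_3k_1k_4}^{tu(-w)}\in\tilde{\ms C}$ (where $k_4=\psi^{-1}(i_4)$), and push forward; with $f_1':=\Psi(f^u_{k_1k_4})$, $f_2':=\Psi(f^v_{k_2k_4})$, $f_3':=\Psi(f^w_{k_3k_4})$ these become precisely $\ms T_{i_1i_2i_4}^{f_1f_2'f_1'},\ms T_{i_2i_3i_4}^{f_2f_3'f_2'},\ms T_{i_3i_1i_4}^{f_3f_1'f_3'}\in\tilde{\ms C}_{S,i,j}$. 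If instead it lies in $\ms D_{S,i,j}$, then $i_1=i$, $i_2=j$ and $\ms T_{iji_3}^{f_{i,j}^\ast f_2f_3}\in\tilde{\ms C}_{S,i,j}$; running the same argument on its pullback $\ms T_{12\kappa_3}^{(-N)st}\in\tilde{\ms C}$ ($\kappa_3:=\psi^{-1}(i_3)$) gives $\ms T_{iji_4}^{f_{i,j}^\ast f_2'f_1'},\ms T_{ji_3i_4}^{f_2f_3'f_2'},\ms T_{i_3ii_4}^{f_3f_1'f_3'}\in\tilde{\ms C}_{S,i,j}$, and since $i_4\in S\minus\{i,j\}$ the first of these upgrades, via the definition of $\ms D_{S,i,j}$, to $\ms T_{iji_4}^{f_1f_2'f_1'}\in\ms D_{S,i,j}$, while the other two already lie in $\tilde{\ms C}_{S,i,j}$.

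Property~2 is the crux and the step I expect to demand the most care. It asks for the Property~1 triple with the second and third circuits reversed, namely $\ms T_{i_1i_2i_4}^{f_1f_2'f_1'}$, $\ms T_{i_3i_2i_4}^{f_2f_2'f_3'}$, $\ms T_{i_1i_3i_4}^{f_3f_3'f_1'}$, under the standing assumption $i_1=i$, $i_2=j$. As before the given triangulation pulls back to $\ms T_{12\kappa_3}^{\rho st}\in\tilde{\ms C}$ with $\rho=r$ or $\rho=-N$ according as it lies in $\tilde{\ms C}_{S,i,j}$ or $\ms D_{S,i,j}$, where $\kappa_3=\psi^{-1}(i_3)$. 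By equation~\eqref{threeflips}, for any $\gamma\in\Gamma_4^3$ and any $x\in\ms X^\ast$ with $X_{12\kappa_3}^{\rho st}\subseteq\Pi(x)$, the triangulation $\ms T^{\gamma}_{\Pi(x)}$ induces on the four circuits supported on $X_{\{1,2,\kappa_3\}}$, $X_{\{1,2,\kappa_4\}}$, $X_{\{2,\kappa_3,\kappa_4\}}$, $X_{\{\kappa_3,1,\kappa_4\}}$ exactly the translates of the $\ms T_{o_\gamma(T)}$ (with $\kappa_4=\psi^{-1}(i_4)$), so I need a $\gamma\in\Gamma_4^3(12\kappa_3)$ whose $o_\gamma$-values on those four triples are $(12\kappa_3)$, $(12\kappa_4)$, $-(2\kappa_3\kappa_4)$, $-(\kappa_3 1\kappa_4)$. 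Reading off the action on the $o_\gamma$ from the proof of Proposition~\ref{groupaction} (with $l=\kappa_4$, so that $\Gamma_4^3(12\kappa_3)$ is the $H_{\kappa_4}$-orbit of $(12\kappa_3)$) shows the required $\gamma$ is $\pi_{(\kappa_3\kappa_4)}(12\kappa_3)=(12\kappa_4)$, which indeed lies in $\Gamma_4^3(12\kappa_3)$. Applying Proposition~\ref{probably}(B) with this $\gamma$ produces $x\in\ms X^\ast$ with $X_{12\kappa_3}^{\rho st}\subseteq\Pi(x)$ and $\ms T[\Pi(x)]=\ms T^{(12\kappa_4)}_{\Pi(x)}$; rewriting its induced triangulations on the other three circuits via the two bookkeeping facts and pushing them forward by $\Psi$ yields exactly $\ms T_{i_1i_2i_4}^{f_1f_2'f_1'}$, $\ms T_{i_3i_2i_4}^{f_2f_2'f_3'}$, $\ms T_{i_1i_3i_4}^{f_3f_3'f_1'}$ with a consistent choice of $f_1',f_2',f_3'$, the $(i,j)$-headed one being upgraded through $\ms D_{S,i,j}$ when $\rho=-N$ exactly as in Property~1. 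The obstacle here is not any single hard idea but keeping the three bookkeeping layers --- the isomorphism $\Psi$, the circuit-reversal convention, and cyclic normalisation --- aligned so that the superscripts come out matching the prescribed pattern; once the correct non-canonical $\gamma$ is pinned down, the remaining verification is mechanical.
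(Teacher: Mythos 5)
Your proposal is correct and follows the paper's own route: transfer Proposition~\ref{probably} through $\Psi$, split into the $\tilde{\ms C}_{S,i,j}$ and $\ms D_{S,i,j}$ cases (upgrading the $(i,j)$-headed circuit via the definition of $\ms D_{S,i,j}$), and handle Property~2 by invoking Proposition~\ref{probably}(B) with a non-canonical $\gamma$. Your explicit choice $\gamma=\pi_{(\kappa_3\kappa_4)}(12\kappa_3)=(12\kappa_4)$ is exactly the ``appropriate value of $\gamma$'' the paper leaves implicit, and its $o_\gamma$-values do produce the reversed-orientation pattern required by Property~2.
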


\begin{proof}
We first prove that Properties 1 and 2 hold. Suppose $\ms T_{i_1i_2i_3}^{f_1f_2f_3} \in \ms C_{S,i,j}$. If $\ms T_{i_1i_2i_3}^{f_1f_2f_3} \in \tilde{\ms C}_{S,i,j}$, then Properties 1 and 2 follow from Proposition~\ref{probably}(B) by setting the appropriate values of $\gamma$. So we may assume $\ms T_{i_1i_2i_3}^{f_1f_2f_3} = \ms T_{ijk}^{f_1f_2f_3} \in \ms D_{S,i,j}$. By definition of $\ms D_{S,i,j}$, we have $\ms T_{ijk}^{f_{i,j}^\ast f_2f_3} \in \tilde{\ms C}_{S,i,j}$. Hence, by Proposition~\ref{probably}(B), there exist $f_1' \in \Delta_{S,i,j,(il)}$, $f_2' \in \Delta_{S,i,j,(jl)}$, and $f_3' \in \Delta_{S,i,j,(kl)}$, where $l = S \minus \{i,j,k\}$, such that
\[
\ms T_{ijl}^{f_{i,j}^\ast f_2'f_1'}, \ms T_{jkl}^{f_2f_3'f_2'}, \ms T_{kil}^{f_3f_1'f_3'} \in \tilde{\ms C}_{S,i,j}.
\]
By definition, we thus have $\ms T_{ijl}^{f_1f_2'f_1'} \in \ms D_{S,i,j}$. Hence
\[
\ms T_{ijl}^{f_1f_2'f_1'}, \ms T_{jkl}^{f_2f_3'f_2'}, \ms T_{kil}^{f_3f_1'f_3'} \in \ms C_{S,i,j}
\]
which proves Property 1. The argument for Property 2 is analogous.

We now prove Property 3. Let $i$, $j$, $l \in [5]$ be distinct and let $f_1 \in \Delta_{(ij)}$. Let $S = [5] \minus \{l\}$. By Proposition~\ref{probably}(A) applied to the triangulation $\ms T_{S,i,j}$, for any $k \in S \minus \{i,j\}$ there exists $f_2 \in \Delta_{S,i,j,(jk)}$ and $f_3 \in \Delta_{S,i,j,(ki)}$ such that
\[
\ms T_{ijk}^{f_{i,j}^\ast f_2 f_3} \in \tilde{\ms C}_{S,i,j}.
\]
Thus by definition, $\ms T_{ijk}^{f_1f_2f_3} \in \ms D_{S,i,j} \subseteq \ms C_{S,i,j}$, which proves Property 3.
\end{proof}

\begin{prop} \label{TcontainsCSij}
Properties \ref{e2contains} and \ref{e2Contains} of Lemma~\ref{ensemble2} hold for $\ms T$ with respect to the collections $\ms C_{S,i,j}$.
\end{prop}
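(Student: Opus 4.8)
The plan is to verify Properties \ref{e2contains} and \ref{e2Contains} separately, in both cases reducing to the sub-permutohedron triangulations $\ms T_{S,i,j}$ and using that, by construction (Proposition \ref{glue}), $\ms T[C]$ is an ordered pseudoproduct of faces of the $\ms T_{S_r,i,j}$'s glued along $\rho = \{(e_i, f_{i,j}^\ast), (e_j, f_{i,j}^\ast)\}$ for every complex cell $C$ of the regular subdivision $\ms S$.

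First I would dispose of the easy case of Property \ref{e2contains}. If $\ms T_{i_1i_2i_3}^{f_1f_2f_3} \in \tilde{\ms C}_{S,i,j}$, then, being the image under $\Psi$ of a subcollection of $\tilde{\ms T}$, it is a subcollection of $\Psi(\tilde{\ms T}) = \ms T_{S,i,j}$, which is itself a subcollection of $\ms T$ by Proposition \ref{glue}; so $\ms T_{i_1i_2i_3}^{f_1f_2f_3} \subseteq \ms T$. The same reasoning shows $\ms T_{ijk}^{f_{i,j}^\ast f_2f_3} \subseteq \ms T$ whenever $\ms T_{ijk}^{f_1f_2f_3} \in \ms D_{S,i,j}$ (since then $\ms T_{ijk}^{f_{i,j}^\ast f_2f_3} \in \tilde{\ms C}_{S,i,j}$), and likewise for the $\tilde{\ms C}_{S,i,j}$-elements arising in Property \ref{e2Contains}. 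Hence in all remaining cases the maximal simplices of $\ms T_{ijk}^{f_{i,j}^\ast f_2f_3}$ already lie in $\ms T$, and what remains is to transport them: to the $f_1$-shifted simplices (for Property \ref{e2contains} on $\ms D_{S,i,j}$), and to the simplex $X_{ijk}^{f_1f_2f_3}\minus\{(e_i,f_1)\}\cup\{(e_i,f)\}$ (for Property \ref{e2Contains}).

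The transport step is the heart of the argument. Using Proposition \ref{tricells} I would locate the relevant simplices inside a single complex cell $C$ of $\ms S$: the four points $(e_j,f_2),(e_k,f_2),(e_k,f_3),(e_i,f_3)$ sit in the $\Pi_{S,i,j}$- or $\Xi_{S,i,j,k}$-factor of $C$; the pair $(e_i,f_1),(e_j,f_1)$ sits in whichever factor of $C$ contains the generator $f_1$; and, for Property \ref{e2Contains}, the vertex $(e_i,f)$ sits in an $\Xi_{S',i,j,l}$-factor, with $S'$ the $4$-set through which $f$ factors. Genericity of the $\epsilon$'s ensures the required cell $C$, of type (a) or (b), exists. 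Since $\ms T[C]$ is an ordered pseudoproduct glued along $\rho$ and $f_{i,j}^\ast$ is precisely the gluing generator, I would then appeal to the explicit description of the maximal simplices of a pseudoproduct in Proposition \ref{glue}: a maximal simplex of $\ms T_{S,i,j}$ through $\rho$ is joined, inside $\ms T[C]$, to a chain running through the factor containing $f_1$ and reaching $(e_i,f_1)$ or $(e_j,f_1)$, and --- in the case of Property \ref{e2Contains} --- also to a simplex in the $\Xi_{S',i,j,l}$-factor reaching $(e_i,f)$. Tracing this chain shows the desired simplex is a face of a maximal simplex of $\ms T[C]$, hence lies in $\ms T$.

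The main obstacle is the bookkeeping in this last step: identifying the correct complex cell $C$, the faces $F_r$ of the $\Pi_{S_r,i,j}$ appearing in it, and checking the desired simplex against the case-structured description in Proposition \ref{glue}. The transport is available because $f_{i,j}^\ast$ is an endpoint of the linear order that $\ms T_{S,i,j}$ induces on the segment $\Delta_{S,i,j,(ij)}$ --- this order is the natural one, inherited from the regular subdivision used to build $\tilde{\ms T}$ --- so the unique maximal simplex of $\ms T_{S,i,j}$ restricted to that segment which contains $\rho$ is $\{(e_i,f_{i,j}^\ast)\}\cup\{(e_j,g):g\in\Delta_{S,i,j,(ij)}\}$, and the pseudoproduct glues the other $(ij)$-segments onto this extreme simplex so that every generator of $\Delta_{(ij)}$, and every vertex $(e_i,f)$ with $f\in\Delta_{(il)}$, becomes reachable by such a chain.
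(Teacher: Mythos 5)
Your first step (elements of $\tilde{\ms C}_{S,i,j}$ lie in $\Psi(\tilde{\ms T}) = \ms T_{S,i,j} \subseteq \ms T$ by Proposition~\ref{glue}) agrees with the paper, but your transport step has a genuine gap. In Properties \ref{e2contains} and \ref{e2Contains} the generators $f_1 \in \Delta_{(ij)}$ and $f \in \Delta_{(il)}$ are \emph{arbitrary}, whereas the complex cells of $\ms S$ with distinguished pair $(i,j)$ (Proposition~\ref{tricells}) are assembled exclusively from the blocks $\Delta_{S',i,j,(\cdot)}$, i.e.\ from $\Delta_{i,j}$ in the direction $(ij)$. If $f_1 \in \Delta_{j,i}$, or $f_1 \in \Delta_{S',i',j',(ij)}$ with $\{i',j'\} \neq \{i,j\}$, then no factor of any such complex cell contains the generator $f_1$; at most the single vertex $(e_j,f_1)$ can occur, inside the affinely independent part $D$ of a cell of $\ms S$. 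Likewise, for $f \in \Delta_{l,i}$ (or in a block with another distinguished pair) the vertex $(e_i,f)$ does not lie in any $\Xi_{S',i,j,l}$-factor, and in fact does not even lie in the maximal cells of $\ms S$ that contain $\Pi_{S,i,j}$ (there $x_i - x_l = -\epsilon_{S',i,j,l} < 1$, so only $(e_l,f)$ survives over that column). So the claims ``the pair $(e_i,f_1),(e_j,f_1)$ sits in whichever factor of $C$ contains $f_1$'' and ``$(e_i,f)$ sits in an $\Xi_{S',i,j,l}$-factor'' fail, and the pseudoproduct description of $\ms T$ on complex cells cannot certify the required simplices for these $f_1$ and $f$. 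A second, related omission: Property~\ref{e2contains} requires the whole triangulation $\ms T_{ijk}^{f_1f_2f_3}$, two of whose maximal simplices contain both points over $f_1$; your chain argument only aims at simplices through $\rho$ and one endpoint over $f_1$, and you never invoke the fact that $X_{ijk}^{f_1f_2f_3}$ is a face of $\Pi$, so that one maximal simplex of $\ms T_X^+$ in $\ms T$ forces $\ms T[X] = \ms T_X^+$.

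The paper sidesteps all cell bookkeeping. It first proves $\{(e_i,f_{i,j}^\ast),(e_j,f)\} \in \ms T$ for \emph{every} $f \in \Delta_{(ij)}$ directly from the height functions $\omega$ and $\tilde\omega$ (Proposition~\ref{fstarmax}; this is where your observation that $f_{i,j}^\ast$ corresponds to the extreme generator $f_{12}^{-N}$ enters, but combined with the heights on all of $\Delta_{(ij)}$, not just on $\Delta_{S,i,j,(ij)}$). Then, given $\ms T_{ijk}^{f_{i,j}^\ast f_2 f_3} \subseteq \ms T$, Proposition~\ref{grow} produces a simplex of $\ms T$ adding a vertex over column $f_1$ to $X_{ijk}^{f_{i,j}^\ast f_2f_3} \minus \{(e_i,f_{i,j}^\ast)\}$, Proposition~\ref{circuitparts} rules out $(e_i,f_1)$, and the face argument just mentioned upgrades the resulting simplex to $\ms T_{ijk}^{f_1f_2f_3} \subseteq \ms T$ (Proposition~\ref{downshift}); Property~\ref{e2Contains} is one more application of Proposition~\ref{grow}, using that in $\Pi$ the only vertices over $f \in \Delta_{(il)}$ are $(e_i,f)$ and $(e_l,f)$. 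If you prefer to argue through $\ms S$, the case $f_1 \notin \Delta_{i,j}$ can alternatively be settled by regularity (the $\omega$-sum over the circuit $X_{ijk}^{f_1f_2f_3}$ is then strictly positive, so $\ms S$ itself induces $\ms T_X^+$ on that face and every refinement contains it), but some mechanism of this kind must be supplied; the pseudoproduct chain alone does not reach generators outside the $(i,j)$-blocks.
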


\begin{proof}
We first note the following two facts.

\begin{prop} \label{fstarmax}
For any $f \in \Delta_{(ij)}$, we have $\{ (e_i, f_{i,j}^\ast), (e_j, f) \} \in \ms T$.
\end{prop}

\begin{proof}
In our construction of $\ms S$ from $\omega$, we had
\[
\omega(e_i,f_{i,j}^\ast) - \omega(e_j,f_{i,j}^\ast) + \omega(e_j,f) - \omega(e_i,f) \le 0
\]
for all $f \in \Delta_{(ij)}$, with equality if and only if $f \in \Delta_{i,j}$. In addition, $\tilde{\ms T}$ is a refinement of a regular subdivision of $\tilde{\Pi}$ given by height function $\tilde \omega$ where
\[
\tilde\omega(e_1,f_{12}^{-N}) - \tilde\omega(e_2,f_{12}^{-N}) + \tilde\omega(e_2,f_{12}^r) - \tilde\omega(e_1,f_{12}^r) < 0
\]
for all $r \neq N$. Thus, for all $f \in \Delta_{(ij)}$, restricting $\ms T$ to the face $X_{ij}^{f_{i,j}^\ast f}$ of $\Pi$ yields the triangulation $\ms T_{X_{ij}^{f_{i,j}^\ast f}}^-$, and hence $\{ (e_i, f_{i,j}^\ast), (e_j, f) \} \in \ms T$.
\end{proof}

\begin{prop} \label{downshift}
Let $\ms T_{ijk}^{f_{i,j}^\ast f_2f_3} \subseteq \ms T$ and $f_1 \in \Delta_{(ij)}$. Then $\ms T_{ijk}^{f_1f_2f_3} \subseteq \ms T$.
\end{prop}

\begin{proof}
Let
\[
\sigma_0 := X_{ijk}^{f_{i,j}^\ast f_2f_3} \minus \{(e_i,f_{i,j}^\ast)\} \in \ms T.
\]
By considering a maximal simplex of $\ms T$ containing $\sigma_0$, we have $\sigma_0 \cup \{(e,f_1)\} \in \ms T$ for some $e \in \{e_i,e_j\}$. If $e = e_i$, then $\{(e_i,f_1),(e_j,f_{i,j}^\ast)\} \subseteq \sigma_0$. However, this contradicts Proposition~\ref{fstarmax} and Proposition \ref{circuitparts} in the Appendix. Thus $e = e_j$. Hence, we have
\[
\sigma_1 := X_{ijk}^{f_1f_2f_3} \minus \{(e_i,f_1)\} \in \ms T.
\]
Now, the circuit $X := X_{ijk}^{f_1f_2f_3}$ is a face of $\Pi$, so $\ms T[X]$ is a triangulation of $X$. Since $\sigma_1 \in \ms T[X]$, this triangulation must be $\ms T_X^+$. Thus $\ms T_{ijk}^{f_1f_2f_3} \subseteq \ms T$.
\end{proof}

Now, for any $\ms T_{i_1i_2i_3}^{f_1f_2f_3} \in \tilde{\ms C}_{S,i,j}$, by definition of $\tilde{\ms T}$ and $\tilde{\ms C}$ we have $\ms T_{i_1i_2i_3}^{f_1f_2f_3} \subseteq \ms T_{S,i,j} \subseteq \ms T$. Suppose $\ms T_{ijk}^{f_1f_2f_3} \in \ms D_{S,i,j}$. Then $\ms T_{ijk}^{f_{i,j}^\ast f_2f_3} \in \tilde{\ms C}_{S,i,j}$ by definition. Thus $\ms T_{ijk}^{f_{i,j}^\ast f_2f_3} \subseteq \ms T$, so by Proposition~\ref{downshift}, $\ms T_{ijk}^{f_1f_2f_3} \subseteq \ms T$. Hence Property~\ref{e2contains} holds.

Now suppose $\ms T_{ijk}^{f_1f_2f_3} \in \ms C_{S,i,j}$, and let $f \in \Delta_{(il)}$ where $\{l\} = [5] \minus S$. By Property (i), we have $X_{ijk}^{f_1f_2f_3} \minus \{(e_i,f_1)\} \in \ms T$. By Proposition~\ref{grow} in the Appendix, we have $X_{ijk}^{f_1f_2f_3} \minus \{(e_i,f_1)\} \cup \{(e_i,f)\} \in \ms T$. Thus Property~\ref{e2Contains} holds.
\end{proof}

Hence, we have collections $\ms C_{S,i,j}$ which satisfy the hypotheses of Lemma~\ref{ensemble2}, and by Propositions~\ref{TcontainsCSij} and \ref{extensiontoA}, there exists a triangulation $\ms T' \in \mc S_{\ms C}$. Clearly $\mc S_{\ms C}$ is not the set of all triangulations of $A$; for example, there exist triangulations of $A$ which do not contain any triangulations of circuits of size six \cite{DRS}. Hence Lemma~\ref{ensemble2} proves Theorem~\ref{main2}.

\section{Appendix}

\subsection{Proofs of Propositions \ref{zonoshift} and \ref{trishift}}

We prove the following general fact.

\begin{prop} \label{genshift}
Let $A$ be a subset of $\Delta^{m-1} \times \Delta^{n-1}$, and let $\ms T$ be a triangulation of $A$. Let $\ms T'$ be the result of a flip on $\ms T$ supported on $X = (X^+,X^-)$. Suppose that $\sigma \in \ms T$ and $\sigma \notin \ms T'$, and $G(\sigma)$ is connected. Then $\sigma$ contains a maximal simplex of $\ms T_X^+$.
\end{prop}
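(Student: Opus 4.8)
The plan is to make the flip completely explicit via Proposition~\ref{flipdef}, reduce the assertion to a counting statement about the cycle $G(X)$, and close that statement using the fact that a subset of $A$ is a simplex precisely when its graph in $K_{m,n}$ is acyclic. By Proposition~\ref{flipdef} we may write $\ms T'=(\ms T\setminus R)\cup R'$, where $R=\{\rho\cup\nu:\rho\in\ms L,\ \nu\in\ms T_X^+\}$ and $R'=\{\rho\cup\nu:\rho\in\ms L,\ \nu\in\ms T_X^-\}$; hence $\sigma\in\ms T$ and $\sigma\notin\ms T'$ force $\sigma\in R$ and $\sigma\notin R'$. The first thing I would establish is that every $\rho\in\ms L$ is disjoint from $X$: since $G(X)$ is a cycle in a bipartite graph it has length at least four, so $|X^+|\ge 2$, and $\ms L$ is the link in $\ms T$ of $X\setminus\{x\}$ for \emph{every} $x\in X^+$, whence $\rho$ is disjoint from $\bigcup_{x\in X^+}(X\setminus\{x\})=X$. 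Consequently each element of $R$ has the \emph{unique} decomposition $\sigma=\rho\sqcup\nu$ with $\rho=\sigma\setminus X$ and $\nu=\sigma\cap X$.

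Combining this with $\ms T_X^+\cap\ms T_X^-=\{\nu\subseteq X:\nu\not\supseteq X^+,\ \nu\not\supseteq X^-\}$, I would then read off: $\sigma\in\ms T\setminus\ms T'$ if and only if $\sigma\setminus X\in\ms L$ and, writing $\tau:=\sigma\cap X$, one has $X^-\subseteq\tau$ but $X^+\not\subseteq\tau$. Since the maximal simplices of $\ms T_X^+$ are exactly the sets $X\setminus\{x\}$ with $x\in X^+$, and since $X\setminus\tau$ is a nonempty subset of $X^+$, the whole claim reduces to showing $|X\setminus\tau|=1$. Now I pass to graphs: $G(X)$ is a cycle whose edges alternate between elements of $X^+$ and of $X^-$; $G(\tau)$ is obtained from this cycle by deleting the positive edges in $X\setminus\tau$; and because no negative edge is deleted and every vertex of the cycle meets a negative edge, $G(\tau)$ still spans every vertex of $G(X)$. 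Also $G(\sigma)\supseteq G(\tau)$ is acyclic, as $\sigma$ is a simplex, and connected by hypothesis.

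To conclude, suppose for contradiction that $X\setminus\tau$ contains two distinct elements $x_1,x_2$. Since $\ms L$ is also the link of $X\setminus\{x_1\}$, the set $\sigma_1:=\rho\cup(X\setminus\{x_1\})$ is a simplex of $\ms T$, and one computes $\sigma_1=\sigma\cup\bigl((X\setminus\tau)\setminus\{x_1\}\bigr)$, so $x_2$ is an edge of $G(\sigma_1)$; it is not an edge of $G(\sigma)$ because $x_2\notin\tau$ and $x_2\notin\rho$ (as $\rho\cap X=\emptyset$), and distinct points of $A$ give distinct edges of $K_{m,n}$. But both endpoints of $x_2$ are vertices of $G(X)$, hence already lie in the connected graph $G(\sigma)$ via $G(\tau)$, so adjoining the edge $x_2$ creates a cycle inside $G(\sigma_1)$ — contradicting that $\sigma_1$ is a simplex. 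Therefore $|X\setminus\tau|=1$, i.e.\ $\tau=X\setminus\{x\}$ for some $x\in X^+$, and $\sigma\supseteq\tau$ is the desired maximal simplex of $\ms T_X^+$.

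I expect the main obstacle to be the bookkeeping in the first two paragraphs: extracting from Proposition~\ref{flipdef} the exact combinatorial description of which simplices disappear — in particular the disjointness $\rho\cap X=\emptyset$ and the resulting uniqueness of $\sigma=\rho\sqcup\tau$ — and keeping straight the point/edge bijection so that the edge $x_2$ is genuinely new. Once this is set up, the acyclicity argument itself is short, and the hypothesis that $G(\sigma)$ is connected enters in exactly one place: it is what makes adding the edge $x_2$ create a cycle.
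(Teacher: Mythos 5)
Your proof is correct and follows essentially the same route as the paper's: both extract from Proposition~\ref{flipdef} that $\sigma$ decomposes as a link element together with a part of $X$ containing $X^-$ but not $X^+$, and then use connectivity of $G(\sigma)$ (which spans all vertices of the cycle $G(X)$ via the negative edges) plus acyclicity of a larger simplex of $\ms T$ to force $\sigma$ to contain $X \minus \{x\}$ for some $x \in X^+$. The only cosmetic difference is that the paper runs the acyclicity argument inside a maximal simplex $\tau \supseteq \sigma$, whereas you use $\rho \cup (X \minus \{x_1\})$ directly.
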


\begin{proof}
Let $\tau$ be a maximal simplex of $\ms T$ containing $\sigma$. We must have $\tau \notin \ms T'$ since $\sigma \notin \ms T'$. So $\tau$ must contain a maximal simplex $\tau'$ of $\ms T_X^+$. Moreover, since $\tau \minus \{x\} \in \ms T'$ for any $x \in X^-$, we must have $\sigma \not\subseteq \tau \minus \{x\}$ for any $x \in X^-$. Hence $\sigma \supseteq X^-$. Since $G(\sigma)$ is connected and $G(\sigma \cup \tau') \subseteq G(\tau)$ is acyclic, this can only happen $\sigma \supseteq \tau'$, as desired.
\end{proof}

Proposition~\ref{trishift} now follows immediately. Proposition~\ref{zonoshift} follows by applying Proposition~\ref{genshift} to each maximal simplex $\sigma$ of $\ms T_{ijk}^{rst}$ and noting that $\sigma$ cannot contain a maximal simplex of a circuit $X$ of $\Pi$ except when $X = X_{ijk}^{rst}$. This is because for a zonotope $A$, each element of $\Delta^{n-1}$ has only two neighbors in $G(A)$.

\subsection{Proofs of Propositions \ref{zonoblock} and \ref{triblock}}

Throughout this section, fix a subset $A \subseteq \Delta^{m-1} \times \Delta^{n-1}$. We need the following two facts.

\begin{prop} \label{grow}
Let $\ms T$ be a triangulation of $A$. Let $\sigma \in \ms T$. Then there exists $\tau \in \ms T$ such that $\sigma \subseteq \tau$, and if $f \in \Delta^{n-1}$ is adjacent to $G(\sigma)$ in $G(A)$, then $f$ is adjacent to $G(\sigma)$ in $G(\tau)$.
\end{prop}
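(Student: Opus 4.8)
The plan is to build $\tau$ greedily, adding one new edge at a time so that every neighbor of $G(\sigma)$ gets "reached". Start with $\tau_0 := \sigma$. If there is no $f \in \Delta^{n-1}$ adjacent to $G(\sigma)$ in $G(A)$ but not in $G(\tau_0)$, we are done with $\tau = \tau_0$. Otherwise pick such an $f$; since $f$ is adjacent to $G(\sigma)$ in $G(A)$, there is some $e_i \in \Delta^{m-1}$ with $e_i \in G(\sigma)$ (i.e.\ some vertex $(e_i,f')$ of $\sigma$) and $(e_i,f) \in A$. The idea is to show $\sigma \cup \{(e_i,f)\} $ is a simplex of $A$, hence lies in a maximal simplex of $\ms T$; but this is not automatic, so instead I would take a maximal simplex $\mu \in \ms T$ containing $\sigma$ and argue that $\mu$ itself already contains a vertex of the form $(e,f)$ with $e \in \{e_i : e_i \in G(\sigma)\}$. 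This is the heart of the argument and I address it next.

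The key claim is: \emph{if $\mu$ is a maximal simplex of $\ms T$ with $\sigma \subseteq \mu$, and $f \in \Delta^{n-1}$ is adjacent to $G(\sigma)$ in $G(A)$, then $f$ is adjacent to $G(\sigma)$ in $G(\mu)$.} To see this, suppose not. Let $e_i \in G(\sigma)$ with $(e_i,f) \in A$. Since $\mu$ is maximal and $(e_i,f) \notin \mu$, the set $\mu \cup \{(e_i,f)\}$ is not a simplex, so $G(\mu \cup \{(e_i,f)\})$ contains a cycle; this cycle must use the edge $e_i f$, so $G(\mu)$ contains a path $Q$ from $e_i$ to the other endpoint of that edge, namely the vertex $f$. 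But that means $f$ already appears as a vertex of $G(\mu)$, i.e.\ there is some $(e_j, f) \in \mu$. Now I want to upgrade this to: $f$ is connected in $G(\mu)$ to a vertex of $G(\sigma)$. Indeed, $G(\mu)$ is a forest, so the path $Q$ from $e_i$ to $f$ inside $G(\mu)$ is unique; since $e_i \in G(\sigma)$, the endpoint $e_i$ of $Q$ lies in $G(\sigma)$, which already shows $f$ is adjacent to $G(\sigma)$ in $G(\mu)$ (via the edge of $Q$ incident to $f$, whose other endpoint is joined to $e_i \in G(\sigma)$ — and in fact I should track the component structure more carefully). The precise statement to extract is that the connected component of $f$ in $G(\mu)$ contains $e_i \in G(\sigma)$, and hence $f$ has a neighbor in $G(\mu)$ lying in that same component, which meets $G(\sigma)$; that is exactly "$f$ adjacent to $G(\sigma)$ in $G(\mu)$" once one observes that "adjacent to $G(\sigma)$" should be read as "adjacent in the graph-theoretic sense to the vertex set $V(G(\sigma))$", and any neighbor of $f$ on the path $Q$ either is in $G(\sigma)$ or one continues along $Q$. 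To make this airtight, I would phrase "$f$ is adjacent to $G(\sigma)$" via: the union $G(\sigma) \cup \{(e_i,f)\}$ remains acyclic. Then the claim becomes: $G(\sigma) \cup \{(e_i,f)\} \subseteq$ some acyclic $G(\mu')$ — but that is false if we insist on the \emph{same} $\mu$, which is why we must allow ourselves to choose $\tau$ rather than use an arbitrary maximal simplex.

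Given the claim, the proof concludes as follows. If $G(\sigma) \cup \{(e_i,f)\}$ is acyclic for the chosen $f, e_i$, then $\sigma \cup \{(e_i,f)\}$ is a simplex of $A$, so it is contained in some maximal simplex $\mu \in \ms T$; replace $\sigma$ by $\sigma \cup \{(e_i,f)\}$ and $\tau_0$ by $\mu$, and repeat. Each step strictly increases the set of $f \in \Delta^{n-1}$ appearing as vertices of the current simplex, or strictly increases its size, so the process terminates at some simplex $\sigma' \supseteq \sigma$ contained in a maximal $\tau \in \ms T$, with every $G(A)$-neighbor $f$ of $G(\sigma)$ now a vertex of $G(\sigma') \subseteq G(\tau)$ and joined there to the same $e_i \in G(\sigma)$. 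If instead $G(\sigma) \cup \{(e_i,f)\}$ already contains a cycle, then as in the claim that cycle forces $f$ to be a vertex of $G(\sigma)$ itself, joined by a path in $G(\sigma)$ to $e_i$, so $f$ is adjacent to $G(\sigma)$ in $G(\sigma)$, a fortiori in $G(\tau)$ for any $\tau \supseteq \sigma$, and this $f$ needs no further attention. I expect the main obstacle to be bookkeeping the meaning of "adjacent to $G(\sigma)$" consistently (vertex-adjacency versus the acyclicity formulation) and verifying termination of the greedy process; the forest structure of $G(\mu)$ for simplices $\mu$, combined with the maximality of $\mu$, is the one genuinely nontrivial input, and it is exactly the characterization "$C$ is a simplex iff $G(C)$ is acyclic" recalled in Section~2.4.
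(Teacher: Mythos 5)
There is a genuine gap, in two places, and they are fatal to the proposed route. First, your ``key claim''---that for \emph{any} maximal simplex $\mu \in \ms T$ containing $\sigma$, every $f$ adjacent to $G(\sigma)$ in $G(A)$ is adjacent to $G(\sigma)$ in $G(\mu)$---is false. Your cycle argument only shows that $f$ is a vertex of $G(\mu)$ lying in the same connected component as $e_i$; it does not produce an edge of $G(\mu)$ from $f$ to a vertex of $G(\sigma)$, and ``in the same component as $G(\sigma)$'' is strictly weaker than ``adjacent to $G(\sigma)$'' (the later applications really need an element $(e_j,f)\in\tau$ with $e_j$ a vertex of $G(\sigma)$). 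Concretely, in $\Delta^1\times\Delta^1$ triangulated by $\{(e_1,f_1),(e_1,f_2),(e_2,f_2)\}$ and $\{(e_1,f_1),(e_2,f_1),(e_2,f_2)\}$, take $\sigma=\{(e_1,f_1)\}$ and $\mu$ the second triangle: $f_2$ is adjacent to $G(\sigma)$ in $G(A)$ via $e_1$, but its only neighbour in $G(\mu)$ is $e_2\notin G(\sigma)$. So the conclusion genuinely depends on choosing $\tau$ well, and your argument supplies no mechanism for that choice. Second, the greedy repair step is unjustified: from the acyclicity of $G(\sigma)\cup\{e_if\}$ you conclude that $\sigma\cup\{(e_i,f)\}$ ``is a simplex of $A$, so it is contained in some maximal simplex of $\ms T$.'' Being an affinely independent subset of $A$ does not make a set a cell of $\ms T$ (in the square above, the diagonal $\{(e_1,f_1),(e_2,f_2)\}$ is a simplex of $A$ contained in no cell of the triangulation that uses the other diagonal); producing the extension \emph{inside} $\ms T$ is exactly what the proposition is about, so the induction never gets started.

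The missing idea---and the paper's proof---is to force the adjacency by first restricting to a suitable face of $A$. Let $\Delta_\sigma:=\Delta^{m-1}\cap G(\sigma)$ and let $F$ be the face of $A$ induced by a weak ordering $\le_w$ with $\min(\le_w,\Delta^{m-1})=\Delta_\sigma$. Then $\sigma\subseteq F$, and for every $f$ adjacent to $\Delta_\sigma$ in $G(A)$ the points of $F$ over $f$ are exactly those $(e,f)$ with $e\in\Delta_\sigma$, i.e.\ \emph{all} neighbours of $f$ in $G(F)$ lie in $\Delta_\sigma$. Now take $\tau$ a maximal simplex of $\ms T[F]$ containing $\sigma$: it is a cell of $\ms T$, and being full-dimensional in $F$ its graph $G(\tau)$ is a spanning forest of $G(F)$, so $f$ is incident to some edge of $G(\tau)$, whose other endpoint necessarily lies in $\Delta_\sigma$. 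That is the adjacency you need, and it is obtained by choosing the face $F$ up front rather than by augmenting $\sigma$ one point at a time inside an arbitrary maximal simplex.
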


\begin{proof}
Let $\Delta_\sigma := \Delta^{m-1} \cap G(\sigma)$. Choose a weak ordering $\le_w$ of $\Delta^{m-1}$ such that 
\[
\min(\le_w,\Delta^{m-1}) = \Delta_\sigma.
\]
Let $F$ be the face of $A$ induced by $\le_w$. Then $\sigma \subseteq F$. Let $\tau$ be a maximal simplex of $\ms T[F]$ containing $\sigma$. Then $\tau$ satisfies the conclusions of the Proposition.
\end{proof}

By symmetry of $\Delta^{m-1}$ and $\Delta^{n-1}$, the above Proposition also holds after replacing $f \in \Delta^{n-1}$ with $e \in \Delta^{m-1}$.

\begin{prop} \label{circuitparts}
Let $\ms T$ be a triangulation of $A$, and suppose $X = (X^+,X^-)$ is a circuit of $A$. Then $\ms T$ does not contain both $X^+$ and $X^-$.
\end{prop}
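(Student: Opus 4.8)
The plan is to argue by contradiction. Suppose both $X^+$ and $X^-$ are simplices in $\ms T$. Since $X = X^+ \cup X^-$ is a circuit, it is the unique minimal affinely dependent subset of $A$ it contains, so both $X^+$ and $X^-$ are affinely independent, hence genuinely simplices, while $X$ itself is not a simplex. The key structural fact to exploit is that a triangulation is a geometric object: the convex hulls of its simplices tile $\conv(A)$ with pairwise intersections along common faces. So I would look at a point in the relative interior of $\conv(X)$ that lies on both $\conv(X^+)$ and $\conv(X^-)$.

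Concretely, let $\sum_{i} \lambda_i x_i = 0$ with $\sum_i \lambda_i = 0$ be the affine dependence, $X^+ = \{x_i : \lambda_i > 0\}$, $X^- = \{x_i : \lambda_i < 0\}$. Normalizing so that $\sum_{\lambda_i > 0} \lambda_i = 1 = -\sum_{\lambda_i < 0}\lambda_i$, the point $p := \sum_{\lambda_i > 0} \lambda_i x_i = \sum_{\lambda_i < 0} (-\lambda_i) x_i$ lies in the relative interior of both $\conv(X^+)$ and $\conv(X^-)$ (each $\lambda_i$ is nonzero, so $p$ is a strictly positive combination of the vertices of each simplex). Now $\conv(X^+)$ and $\conv(X^-)$ are both faces of the subdivision cells they sit inside — more simply, they are themselves convex hulls of simplices $X^+, X^- \in \ms T$ — so by axiom (2) of a subdivision, $\conv(X^+) \cap \conv(X^-) = \conv(F)$ for some common face $F$ of $X^+$ and $X^-$. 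But $p$ lies in this intersection and in the relative interior of $\conv(X^+)$, which forces $F = X^+$; symmetrically $F = X^-$. Hence $X^+ = X^-$, which is absurd since $X^+$ and $X^-$ are disjoint and nonempty (a circuit has at least two elements, and the sign partition has both parts nonempty because $\sum_i \lambda_i = 0$ with all $\lambda_i \neq 0$).

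The main obstacle is being careful about the statement ``$\conv(X^+) \cap \conv(X^-) = \conv(F)$ for a common face'': axiom (2) of Definition of a subdivision applies to cells $C, C' \in \ms S$, and $X^+, X^- \in \ms T$ since $\ms T$ is closed under taking faces and $X^+ \subseteq \tau$ for some maximal simplex $\tau \in \ms T$ (likewise $X^-$). A subtlety is that a face of a cell $C$ here means a subset minimizing a linear functional on $C$; I should confirm that the intersection-face $F$ satisfies $F \subseteq X^+ \cap X^-$, so that $F = X^+$ and $F = X^-$ together give $X^+ = X^- \subseteq X^+ \cap X^-$, contradicting $X^+ \cap X^- = \emptyset$. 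This is immediate from the definition since a face of $X^+$ is in particular a subset of $X^+$. Everything else is routine, so the proof is short.
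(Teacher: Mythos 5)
Your proposal is correct and is essentially the paper's argument: the paper's one-line proof simply observes that the relative interiors of $\conv(X^+)$ and $\conv(X^-)$ intersect, which is exactly the point $p$ you construct, and the incompatibility with the common-face axiom of a subdivision is the same implicit conclusion you spell out. Your version just makes the details (affine independence of the parts, $p$ lying in both relative interiors, and $F=X^+=X^-$ forcing a contradiction) explicit.
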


\begin{proof}
The interiors of the convex hulls of opposite parts of a circuit intersect.
\end{proof}

The next Proposition immediately implies Propositions \ref{zonoblock} and \ref{triblock}. We use the notation for circuits and triangulations of circuits described in Section~\ref{sec:tricomponent}.

\begin{prop}
Let $\ms T$ be a triangulation of $A$. Let $i$, $j$, $k$, $l \in [m]$ be distinct , and for each $\alpha \in \Gamma_{\{i,j,k,l\}}^2$, let $f_\alpha \in \Delta^{n-1}$.
Suppose that
\[
\ms T_{jkl}^{f_{(jk)}f_{(kl)}f_{(lj)}}, \ms T_{kil}^{f_{(ki)}f_{(il)}f_{(lk)}} \subseteq \ms T
\]
and
\[
X_{ijl}^{f_{(ij)}f_{(jl)}f_{(li)}} \minus \{ (e_i, f_{(ij)}) \} \in \ms T.
\]
Then $\ms T$ does not have a flip supported on $X_{ijk}^{f_{(ij)}f_{(jk)}f_{(ki)}}$.
\end{prop}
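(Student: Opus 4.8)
The plan is to reduce the statement to a problem about maximal simplices. A flip supported on $(X^+,X^-)$, where $X := X_{ijk}^{f_{(ij)}f_{(jk)}f_{(ki)}}$, requires $\ms T_X^+ \subseteq \ms T$ by Proposition~\ref{flipdef}, hence $X^- \in \ms T$; if $X^- \notin \ms T$ there is nothing to prove. So assume $X^- \in \ms T$. By Proposition~\ref{flip} it then suffices to produce a maximal simplex $\tau \in \ms T$ with $X^- \subseteq \tau$ and $\abs{X \cap \tau} \le \abs{X} - 2 = 4$. (By Proposition~\ref{circuitparts}, $X^+ \notin \ms T$, so no flip supported on the opposite orientation is possible either.)

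To build $\tau$, abbreviate $p = f_{(ij)}$, $q = f_{(jk)}$, $r = f_{(ki)}$, $s = f_{(kl)}$, $t = f_{(il)}$, $u = f_{(jl)}$, so that $X^+ = \{(e_i,p),(e_j,q),(e_k,r)\}$, $X^- = \{(e_j,p),(e_k,q),(e_i,r)\}$, and the twelve points involved span the Cayley-embedded $3$-permutohedron on $\{e_i,e_j,e_k,e_l\}$. The hypotheses supply three $5$-element maximal simplices lying in $\ms T$: the maximal simplex $\alpha_1 := \{(e_k,q),(e_k,s),(e_l,s),(e_l,u),(e_j,u)\}$ of $\ms T_{jkl}^{f_{(jk)}f_{(kl)}f_{(lj)}}$; the maximal simplex $\alpha_2 := \{(e_i,r),(e_i,t),(e_l,t),(e_l,s),(e_k,s)\}$ of $\ms T_{kil}^{f_{(ki)}f_{(il)}f_{(lk)}}$; and $\alpha_3 := \{(e_j,p),(e_j,u),(e_l,u),(e_l,t),(e_i,t)\}$, which is exactly the given set $X_{ijl}^{f_{(ij)}f_{(jl)}f_{(li)}} \minus \{(e_i,f_{(ij)})\}$. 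Each $\alpha_m$ contains precisely one element of $X^-$ and no element of $X^+$. Put $\sigma := \alpha_1 \cup \alpha_2 \cup \alpha_3$; a short check shows $G(\sigma)$ has nine edges and is a tree spanning the ten vertices $\{e_i,e_j,e_k,e_l\} \cup \{f_\alpha : \alpha \in \Gamma_{\{i,j,k,l\}}^2\}$, so $\sigma$ is a simplex with $X^- \subseteq \sigma$, and $G(\sigma)$ already contains a path from $e_i$ to $p$, from $e_j$ to $q$, and from $e_k$ to $r$. Hence any maximal simplex $\tau \in \ms T$ with $\sigma \subseteq \tau$ contains no element of $X^+$ — adding any would close a cycle — so $X \cap \tau = X^-$ and $\abs{X \cap \tau} = 3 \le 4$, and $\tau$ blocks the flip.

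Everything therefore reduces to showing $\sigma \in \ms T$, and this is the substantive step: even with $\alpha_1,\alpha_2,\alpha_3 \in \ms T$, a union of faces of a triangulation need not be a face when it happens to be a simplex. The approach I would take is, first, to pass from $A$ to its face $F$ on the index sets $\{i,j,k,l\}$ and $\{p,q,r,s,t,u\}$; this is harmless, because $X \subseteq F$ forces a blocking maximal simplex of the restricted triangulation $\ms T[F]$ to extend to one of $\ms T$, and after the restriction $\sigma$ is automatically \emph{maximal}, so it is enough to know $\sigma \in \ms T[F]$. Then, using that $\ms T_Y^+ \subseteq \ms T$ and $\ms T_Z^+ \subseteq \ms T$ (with $Y := X_{jkl}^{f_{(jk)}f_{(kl)}f_{(lj)}}$, $Z := X_{kil}^{f_{(ki)}f_{(il)}f_{(lk)}}$) — which by Proposition~\ref{circuitparts} means $\ms T$ restricts to exactly $\ms T_Y^+$ on $Y$ and to $\ms T_Z^+$ on $Z$ — together with $\alpha_3 \in \ms T$ and repeated applications of Proposition~\ref{grow} to enlarge these simplices inside $\ms T$ while tracking which of the ten vertices the enlargements meet, one forces all nine edges of $\sigma$ into a single maximal simplex of $\ms T$. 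Carrying out this last assembly — extracting from the rigidity of $\ms T_Y^+, \ms T_Z^+ \subseteq \ms T$ and $\alpha_3 \in \ms T$ enough to pin $\sigma$ down as a face of $\ms T$ — is the main obstacle; once it is in place, the previous paragraph completes the proof.
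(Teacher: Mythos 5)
Your endgame coincides with the paper's: both arguments reduce, via Proposition~\ref{flip}, to exhibiting a maximal simplex $\tau \in \ms T$ with $X^- \subseteq \tau$ and $\abs{X \cap \tau} \le 4$, and your candidate $\sigma := \alpha_1 \cup \alpha_2 \cup \alpha_3$ would indeed do the job if it were a face of $\ms T$ (your spanning-tree observation correctly shows no element of $X^+$ can join it). But the membership $\sigma \in \ms T$ is the entire content of the proposition, and you leave it unproved --- you yourself call it ``the main obstacle.'' Having $\alpha_1, \alpha_2, \alpha_3 \in \ms T$ with affinely independent union does not make the union a face, and in the setting of Proposition~\ref{triblock} the restriction $\ms T[F]$ ranges over arbitrary (in particular non-coherent) triangulations of the vertex set of $\Delta^3 \times \Delta^5$, so no structural shortcut is available. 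Your assembly plan via Proposition~\ref{grow} is not yet an argument: \ref{grow} only guarantees that a simplex extends by \emph{some} point attached to a prescribed vertex of $G(A)$; excluding the unwanted attachments is where all the work lies. In the paper this is done in Claims 1--4 by playing Proposition~\ref{circuitparts} against the two hypothesized circuit triangulations, against the triangulation of $X_{ijl}^{f_{(ij)}f_{(jl)}f_{(li)}}$ forced by $\alpha_3$, and crucially against $\ms T_X^+ \subseteq \ms T$ --- which is available only because the paper argues by contradiction from the assumed flip, an assumption your assembly sketch never invokes.

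There is also a concrete reason to doubt that a grow-and-exclude argument terminates at your particular $\sigma$: the paper's forced chain, which likewise starts from $\alpha_3$, produces a \emph{different} blocking simplex, namely $\alpha_3 \cup \{(e_j,f_{(jk)}), (e_l,f_{(kl)}), (e_k,f_{(jk)}), (e_i,f_{(ki)})\}$, which contains the element $(e_j,f_{(jk)})$ of $X^+$ (so there $\abs{X \cap \tau} = 4$ rather than $3$); and at the step where your $\sigma$ would need $(e_k,f_{(kl)})$, the paper's Claim~3 shows that this choice is incompatible with the simplex built so far, since it would complete $(X_{jkl}^{f_{(jk)}f_{(kl)}f_{(lj)}})^+$. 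Your $\sigma$ is in fact a cell of the coherent model triangulation $\ms T_{\Pi^3}^{\gamma}$ with $\gamma = (ijk)$, so the claim $\sigma \in \ms T$ may well be true in general, but as written your proposal establishes only the final reduction, which is the easy part shared with the paper; the substantive forcing argument is missing.
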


\begin{proof}
Suppose the contrary. In particular, this means
\[
\ms T_{ijk}^{f_{(ij)}f_{(jk)}f_{(ki)}} \subseteq \ms T.
\]
Set $\sigma_0 = X_{ijl}^{f_{(ij)}f_{(jl)}f_{(li)}} \minus \{ (e_i, f_{(ij)}) \}$. By Proposition~\ref{grow}, there exists $\sigma_1 \in \ms T$ such that
\[
\sigma_1 = \sigma_0 \cup \{(e,f_{(jk)})\} \cup \{(e',f_{(kl)})\}
\]
where $e$, $e' \in \{e_i,e_j,e_l\}$. We prove the following claims.

\paragraph{Claim 1:} $e = e_j$. Suppose first that $e = e_i$. Then $\{(e_i,f_{(jk)}),(e_j,f_{(ij)})\} \subseteq \sigma_1$. However, we also have $\{(e_i,f_{(ij)}),(e_j,f_{(jk)})\} \in \ms T_{ijk}^{f_{(ij)}f_{(jk)}f_{(ki)}} \subseteq \ms T$. This contradicts Proposition~\ref{circuitparts}.

Now suppose that $e = e_l$. Then $\{(e_l,f_{(jk)}),(e_j,f_{(jl)})\} \subseteq \sigma_1$. However, we have $\{(e_l,f_{(jl)}),(e_j,f_{(jk)})\} \in \ms T_{jkl}^{f_{(jk)}f_{(kl)}f_{(lj)}} \subseteq \ms T$, again a contradiction. So $e = e_i$.

\paragraph{Claim 2:} $e' = e_l$. Suppose first that $e' = e_i$. Then $\{(e_i,f_{(kl)}),(e_l,f_{(li)})\} \subseteq \sigma_1$. However, we have $\{(e_i,f_{(li)}),(e_l,f_{(kl)})\} \in \ms T_{kil}^{f_{(ki)}f_{(il)}f_{(lk)}} \subseteq \ms T$, a contradiction.

Now suppose that $e' = e_j$. Then $\{(e_j,f_{(kl)}),(e_l,f_{(jl)})\} \subseteq \sigma_1$. However, we have $\{(e_j,f_{(jl)}),(e_l,f_{(kl)})\} \in \ms T_{jkl}^{f_{(jk)}f_{(kl)}f_{(lj)}} \subseteq \ms T$, a contradiction. So $e' = e_l$.
\\

Now, by Proposition~\ref{grow} and the comment afterwards, there exists $\sigma_2 \in \ms T$ such that
\[
\sigma_2 = \sigma_1 \cup \{ (e_k,f) \}
\]
where $f \in \{f_{(ij)},f_{(jk)},f_{(il)},f_{(jl)},f_{(kl)}\}$. We prove the following.

\paragraph{Claim 3:} $f = f_{(jk)}$. The argument goes as follows:

If $f = f_{(ij)}$, then $\{(e_k,f_{(ij)}),(e_j,f_{(jk)})\} \subseteq \sigma_2$ but $\{(e_k,f_{(jk)}),(e_j,f_{(ij)})\} \in \ms T_{ijk}^{f_{(ij)}f_{(jk)}f_{(ki)}}$.

If $f = f_{(il)}$, then $\{(e_k,f_{(il)}),(e_l,f_{(kl)})\} \subseteq \sigma_2$ but $\{(e_k,f_{(kl)}),(e_l,f_{(il)})\} \in \ms T_{kil}^{f_{(ki)}f_{(il)}f_{(lk)}}$.

If $f = f_{(jl)}$, then $\{(e_k,f_{(jl)}),(e_j,f_{(jk)})\} \subseteq \sigma_2$ but $\{(e_k,f_{(jk)}),(e_j,f_{(jl)})\} \in \ms T_{jkl}^{f_{(jk)}f_{(kl)}f_{(lj)}}$.

If $f = f_{(kl)}$, then $(X_{jkl}^{f_{(jk)}f_{(kl)}f_{(lj)}})^+ \subseteq \sigma_2$, but $(X_{jkl}^{f_{(jk)}f_{(kl)}f_{(lj)}})^- \in \ms T_{jkl}^{f_{(jk)}f_{(kl)}f_{(lj)}}$.

Thus $f = f_{(jk)}$.
\\

Finally, by Proposition~\ref{grow} there exists $\sigma_3 \in \ms T$ such that
\[
\sigma_3 = \sigma_2 \cup \{(e,f_{(ki)})\}
\]
where $e \in \{e_i,e_j,e_k,e_l\}$.

\paragraph{Claim 4:} $e = e_i$. The argument goes as follows.

If $e = e_j$, then $\{(e_j,f_{(ki)}),(e_k,f_{(jk)})\} \subseteq \sigma_3$ but $\{(e_j,f_{(jk)}),(e_k,f_{(ki)})\} \in \ms T_{ijk}^{f_{(ij)}f_{(jk)}f_{(ki)}}$.

If $e = e_k$, then $(X_{kil}^{f_{(ki)}f_{(il)}f_{(lk)}})^+ \subseteq \sigma_3$ but $(X_{kil}^{f_{(ki)}f_{(il)}f_{(lk)}})^- \in \ms T_{kil}^{f_{(ki)}f_{(il)}f_{(lk)}}$.

If $e = e_l$, then $\{(e_l,f_{(ki)}),(e_i,f_{(il)})\} \subseteq \sigma_3$ but $\{(e_l,f_{(il)}),(e_i,f_{(ki)})\} \in \ms T_{kil}^{f_{(ki)}f_{(il)}f_{(lk)}}$.

Thus $e = e_i$.
\\

Now, let $\tau$ be a maximal simplex of $\ms T$ containing $\sigma_3$. Then $(X_{ijk}^{f_{(ij)}f_{(jk)}f_{(ki)}})^- \subseteq \tau$ but $\abs{X_{ijk}^{f_{(ij)}f_{(jk)}f_{(ki)}} \cap \tau} = 4$. This contradicts Proposition~\ref{flip} and our assumption that $\ms T$ has a flip supported on $X_{ijk}^{f_{(ij)}f_{(jk)}f_{(ki)}}$. So $\ms T$ does not have such a flip.
\end{proof}

\subsection{Proof of Proposition~\ref{glue}}

Our proof is based on the following.

\begin{thm}[Rambau \cite{Ram97}] \label{combintri}
Let $\ms M$ be a nonempty collection of full-dimensional simplices of a point set $A$. Then $\ms M$ is the set of maximal simplices of a triangulation of $A$ if and only if
\begin{enumerate}
\item There is no circuit $X = (X^+,X^-)$ of $A$ and simplices $\tau$, $\tau' \in \ms M$ such that $X^+ \subseteq \tau$, $X^- \subseteq \tau'$.
\item For any simplex $\tau \in \ms M$ and facet $\sigma$ of $\tau$, either $\sigma$ is contained in a facet of $A$ or there is another $\tau' \in \ms M$, $\tau' \neq \tau$, such that $\sigma \subseteq \tau'$.
\end{enumerate}
\end{thm}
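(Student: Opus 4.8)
The plan is to prove both implications; note first that if $\ms M$ is the set of maximal simplices of a triangulation then that triangulation is necessarily $\ms T := \{F : F \text{ is a face of some } \tau \in \ms M\}$, so this is also the object to construct for the converse. Write $d := \dim\conv(A)$, so every member of $\ms M$ has exactly $d+1$ points and all topological notions are taken inside $\operatorname{aff}(A)$. For the forward direction, assume $\ms M$ is the set of maximal simplices of a triangulation. For condition~1: if a circuit $X = (X^+,X^-)$ has $X^+ \subseteq \tau$ and $X^- \subseteq \tau'$ with $\tau,\tau' \in \ms M$, then normalizing its affine dependence gives a point $p$ in the relative interiors of both $\conv(X^+)$ and $\conv(X^-)$, so $p \in \conv(\tau)\cap\conv(\tau') = \conv(F)$ for a common face $F$; since the minimal face of the simplex $\conv(\tau)$ containing $p$ must contain all of $X^+$, and similarly for $X^-$, we get $X \subseteq F \subseteq \tau$, contradicting that $X$ is affinely dependent. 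For condition~2: given $\tau \in \ms M$ and a facet $\sigma$ of $\tau$ not contained in any facet of $A$, the set $\operatorname{relint}\conv(\sigma)$ meets $\operatorname{int}\conv(A)$; picking $q$ in this intersection and points $q_t$ a small distance from $q$ on the side of $\operatorname{aff}(\sigma)$ opposite $\tau$, each $q_t \in \conv(A) = \bigcup_{C\in\ms T}\conv(C)$ lies in some $\conv(\tau^\ast)$, $\tau^\ast \in \ms M$, and letting $t \to 0$ along a subsequence with $\tau^\ast$ fixed, the subdivision axioms force $\sigma \subseteq \tau^\ast$, while $\tau^\ast \neq \tau$ since $\conv(\tau^\ast)$ meets the far side of $\operatorname{aff}(\sigma)$ and $\conv(\tau)$ does not.

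For the converse, assume conditions~1 and~2. The first step is that condition~1 forces the relative interiors $\operatorname{relint}\conv(\tau)$, $\tau \in \ms M$, to be pairwise disjoint: a common point $p$ of two of them gives an affine dependence $\eta$, the difference of the two barycentric representations of $p$, whose positive support lies in $\tau$ and whose negative support lies in $\tau'$; since $\tau \neq \tau'$ have equal cardinality we have $\tau \setminus \tau' \neq \emptyset$, so $\eta \neq 0$, and writing $\eta$ as a conformal composition of circuits (a standard fact about point configurations) produces a circuit $X$ with $X^+ \subseteq \tau$, $X^- \subseteq \tau'$, contradicting condition~1. The same argument, applied to the minimal faces of $\conv(\tau)$ and $\conv(\tau')$ through a common point, shows $\conv(\tau)\cap\conv(\tau') = \conv(\tau\cap\tau')$ for all $\tau,\tau' \in \ms M$. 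From this the intersection axiom for $\ms T$ follows by routine manipulation of subsets of a simplex, the face-closure axiom is immediate, and since no full-dimensional simplex is a proper subset of another, the maximal cells of $\ms T$ are exactly $\ms M$. The only remaining axiom is $\bigcup_{\tau\in\ms M}\conv(\tau) = \conv(A)$.

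To obtain this, let $\mc H$ be the finite arrangement of the affine hulls of all facets of all simplices in $\ms M$, and for $x \in \operatorname{int}\conv(A) \setminus \bigcup\mc H$ set $m(x) := \abs{\{\tau \in \ms M : x \in \conv(\tau)\}}$; this is constant on each connected component (chamber) of $\operatorname{int}\conv(A)\setminus\bigcup\mc H$, and by the previous step $m \le 1$ everywhere. I claim $m$ takes equal values on any two chambers sharing a wall. Fix such a wall $W$, a relatively open piece of some $h \in \mc H$ chosen to avoid all lower-dimensional flats of the configuration, with adjacent chambers $R,R'$. A simplex $\tau \in \ms M$ contributes to $m(R)$ but not $m(R')$, or vice versa, only when $\conv(\tau)$ has a facet $\sigma$ with $\operatorname{aff}(\sigma) = h$ and $W \subseteq \operatorname{relint}\conv(\sigma)$; this $\sigma$ is then unique for $\tau$, as distinct facets of a simplex have distinct affine hulls, and $\sigma$ is not contained in a facet of $A$ because $W \subseteq \operatorname{int}\conv(A)$, so condition~2 supplies a simplex $\tau' \neq \tau$ with $\sigma \subseteq \tau'$, which by the disjointness of relative interiors must lie on the side of $h$ opposite $\tau$. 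The assignment $\tau \mapsto \tau'$ is a bijection between the relevant simplices on the $R$-side and those on the $R'$-side, with injectivity because $\sigma$ is recovered from $\tau'$ as its unique facet with affine hull $h$, so $m(R) = m(R')$. Since the adjacency graph on the chambers of a finite hyperplane arrangement inside a convex body is connected, $m$ is globally constant; as $\ms M \neq \emptyset$ makes $m = 1$ somewhere while $m \le 1$ everywhere, $m \equiv 1$, so $\bigcup_{\tau\in\ms M}\conv(\tau)$ is a closed subset of $\conv(A)$ of full measure and hence equals $\conv(A)$.

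The step I expect to be the main obstacle is the wall-crossing bijection: one must choose $W$ carefully enough that every simplex of $\ms M$ either misses $W$ or meets it in the relative interior of exactly one facet, and then combine condition~2 with the disjointness of relative interiors to match the two sides of $W$ precisely, with both the injectivity and the ``opposite side'' conclusions resting on the simplex structure. The conformal decomposition of a vector into circuits, used in the disjointness and face-to-face steps, is the other place where some care (or an appeal to the standard theory of point configurations) is needed; everything else reduces to elementary facts about faces of polytopes and simplices.
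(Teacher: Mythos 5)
The paper does not prove this theorem: it is imported verbatim from Rambau \cite{Ram97} and used as a black box in the appendix to establish Proposition~\ref{glue}, so there is no internal proof to compare yours against. Your argument is, as far as I can check, a correct self-contained proof, and it follows the standard route in the literature (this is the ``no conflicting circuit plus pseudo-manifold'' criterion; compare Corollary 4.5.20 of \cite{DRS}). The structure is right: condition~1 together with the conformal decomposition of an affine dependence into circuits gives disjointness of the relative interiors of the $\conv(\tau)$ and the identity $\conv(\tau)\cap\conv(\tau')=\conv(\tau\cap\tau')$, which settles every subdivision axiom except the covering one; condition~2 then enters only through the wall-crossing argument showing the multiplicity function $m$ is constant across adjacent chambers of the facet-hyperplane arrangement, whence $m\equiv 1$. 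The two external inputs you flag are exactly the right ones to flag, and both are standard: conformal circuit decomposition is basic oriented-matroid/point-configuration theory, and connectivity of the chamber adjacency graph through generic walls follows by joining two generic points with a segment avoiding the codimension-two flats. One point worth stating explicitly in the wall-crossing step, since everything downstream depends on it: because a chamber $R$ avoids every facet hyperplane and is connected, each $\tau\in\ms M$ either contains $R$ in $\operatorname{int}\conv(\tau)$ or is disjoint from $R$; this is what makes $m$ well defined on chambers and reduces the discrepancy across a wall $W$ to those $\tau$ having a (necessarily unique) facet whose relative interior contains $W$. Granting that, the matching $\tau\mapsto\tau'$ supplied by condition~2, its injectivity via the uniqueness of the facet of $\tau'$ spanning the wall's hyperplane, and the ``opposite side'' conclusion via disjointness of interiors are all sound, and the forward direction is routine. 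I see no gap.
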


Let $\ms M$ be as in Proposition~\ref{glue}. We prove Properties 1 and 2 of Theorem~\ref{combintri} for $\ms M$.

\begin{proof}[Proof of Property 1]
Suppose that $X = (X^+,X^-)$ is a circuit of $\Pi_1 \cup \dotsb \cup \Pi_N$ and we have $X^+ \subseteq \sigma$, $X^- \subseteq \sigma'$ for some $\sigma$, $\sigma' \in \ms M$. Let
\begin{align*}
\sigma &= \left( \bigcup_{r < s} (\sigma_r \minus \rho) \right) \cup \sigma_s \cup \left( \bigcup_{r > s} \sigma_r \right) \\
\sigma' &= \left( \bigcup_{r < s'} (\sigma'_r \minus \rho) \right) \cup \sigma'_{s'} \cup \left( \bigcup_{r > s'} \sigma'_r \right)
\end{align*}
as in Proposition~\ref{glue}, with analogous definitions for $\sigma'$. Suppose first that $X \subseteq \Pi_r$ for some $1 \le r \le N$. Then we have $X^+ \subseteq \sigma \cap \Pi_r \subseteq \sigma_r$ and similarly $X^- \subseteq \sigma'_r$. This contradicts the fact that $\sigma_r$, $\sigma'_r$ are simplices of a triangulation $\ms T_r$. So we cannot have this case.
It follows that $X \subseteq \Pi_{r_1} \cup \Pi_{r_2}$ for some $1 \le  r_1 <  r_2 \le N$, and we can uniquely write
\[
X = X_{r_1} \cup X_{r_2} \minus \rho 
\]
where $X_{r_1} = (X_{r_1}^+,X_{r_1}^-)$ and $X_{r_2} = (X_{r_2}^+,X_{r_2}^-)$ are circuits in $\Pi_{r_1}$ and $\Pi_{r_2}$ respectively, and
\begin{alignat*}{2}
X_{r_1}^+ &= (X^+ \cap \Pi_{r_1}) \cup \rho_1 \qquad& X_{r_1}^- &= (X^- \cap \Pi_{r_1}) \cup \rho_2\\
X_{r_2}^+ &= (X^+ \cap \Pi_{r_2}) \cup \rho_2 \qquad& X_{r_2}^- &= (X^- \cap \Pi_{r_2}) \cup \rho_1
\end{alignat*}
where $\rho_1$, $\rho_2$ are different one-element subsets of $\rho$.

We claim that $X_{r_1}^+ \subseteq \sigma_{r_1}$. First suppose that $r_1 < s$. Then $(X^+ \cap \Pi_{r_1}) \cup \rho \subseteq \sigma_{r_1}$, so $X_{r_1}^+ \subseteq \sigma_{r_1}$, as desired. So we may assume $r_1 \ge s$, and hence $s < r_2 \le N$. Now, since $s < N$, we have $\sigma \cap \rho = \rho_i$ for either $i = 1$ or 2. If $i = 1$, then $X_{r_1}^+ \subseteq \sigma_{r_1}$ and we are done. Suppose $i = 2$. Then $X_{r_2}^+ \subseteq \sigma_{r_2}$. Since $r_2 > s$, it follows that $X_{r_2}^+$ is contained in a facet $F$ of $\Pi_{r_2}$ with $F \cap \rho = \rho_2$. However, $X_{r_2}^+ \subseteq F$ implies $X_{r_2} \subseteq F$ because $F$ is a face of $\Pi_{r_2}$. This contradicts $F \cap \rho = \rho_2$. Hence we have $X_{r_1}^+ \subseteq \sigma_{r_1}$.

By the same argument, we have $X_{r_1}^- \subseteq \sigma'_{r_1}$. Hence $X_{r_1}^+$, $X_{r_1}^- \in \ms T_{r_1}$, a contradiction. This proves Property 1.
\end{proof}

\begin{proof}[Proof of Property 2]
Let
\[
\sigma = \left( \bigcup_{r < s} (\sigma_r \minus \rho) \right) \cup \sigma_s \cup \left( \bigcup_{r > s} \sigma_r \right)
\]
be an element of $\ms M$ as before. Let $x \in \sigma$, and consider the facet $\sigma \minus \{x\}$. We have the following cases.
\\

\emph{Case 1:} $x \in \sigma_t \minus \rho$ where $t < s$.

\emph{Subcase 1.1:} $\sigma_t \minus \{x\}$ is not contained in a facet of $\Pi_t$. Then there is some $x' \in \Pi_t$ such that $\sigma_t' := \sigma_t \minus \{x\} \cup \{x'\} \in \ms T_t$. Then
\[
\sigma \minus \{x\} \cup \{x'\} = \left( \bigcup_{\substack{ r < s \\ r \neq t }} (\sigma_r \minus \rho) \right) \cup (\sigma_t' \minus \rho) \cup \sigma_s \cup \left( \bigcup_{r > s} \sigma_r \right)
\]
is an element of $\ms M$ containing $\sigma \minus \{x\}$, as desired.

\emph{Subcase 1.2:} $\sigma_t \minus \{x\}$ is contained in a facet $F$ of $\Pi_t$. Let $\phi : \Pi_t \to \bb R$ be a linear functional supporting $F$ on $\Pi_t$. Since $\rho \subseteq \sigma_t \minus \{x\}$, we have $\rho \subseteq F$, hence $\phi(\rho) = \{b\}$ where $b$ is the minimum of $\phi$ on $\Pi_t$. We may assume $b = 0$.\footnote{This is because $\Pi_t$ is contained in an affine subspace not containing the origin.} Now, we can extend $\phi$ to a linear functional $\phi'$ on $\Pi_1 \cup \dotsb \cup \Pi_N$ by setting $\phi'(v) = \phi(v)$ for all $v \in \Pi_t$ and $\phi'(v) = 0$ for all $v \in \Pi_r$, $r \neq t$.\footnote{We can do this because any affine dependence in $\Pi_1 \cup \dotsb \cup \Pi_N$ can be written as a sum of affine dependencies each contained in one of the $\Pi_r$.} Then the face of $\Pi_1 \cup \dotsb \cup \Pi_N$ supported by $\phi'$ contains $\sigma \minus \{x\}$, and this face is proper because it does not contain $\Pi_t$. Thus $\sigma \minus \{x\}$ is contained in a facet of $\Pi_1 \cup \dotsb \cup \Pi_N$, as desired.
\\

\emph{Case 2:} $x \in \rho$.

\emph{Subcase 2.1:} $\sigma \cap \rho \neq \rho$. Then $\sigma \minus \{x\}$ is contained in a facet of $\Pi_1 \cup \dotsb \cup \Pi_N$ because $f$ has no neighbors in $G(\sigma \minus \{x\})$.

\emph{Subcase 2.2:} $\sigma \cap \rho = \rho$. This implies $s = N$ and $x \in \sigma_r$ for all $r$. First, suppose there is some $t$ such that $\sigma_t \minus \{x\}$ is not contained in a facet of $\Pi_t$, and let $t$ be the largest such number. Then there is some $x' \in \Pi_t$ such that $\sigma_t' := \sigma_t \minus \{x\} \cup \{x'\} \in \ms T_t$. Thus
\[
\sigma \minus \{x\} \cup \{x'\} = \left( \bigcup_{r < t} (\sigma_r \minus \rho) \right) \cup \sigma_t' \cup \left( \bigcup_{r > t} (\sigma_r \minus \{x\}) \right)
\]
is an element of $\ms M$ containing $\sigma \minus \{x\}$, as desired.\footnote{In more detail: By definition of $t$, if $r > t$ then $\sigma_r \minus \{x\}$ is contained in a facet $F$ of $\Pi_r$. We must have $F \cap \rho = \rho \minus \{x\}$ because $\sigma_r$ is full-dimensional in $\Pi_r$.}

Now suppose there is no such $t$. Then for all $r$, $\sigma_r \minus \{x\}$ is contained in a facet $F_r$ of $\Pi_r$. We have $x \notin F_r$ since $\sigma_r$ is full dimensional in $\Pi_r$. For each $r$, let $\phi_r : \Pi_r \to \bb R$ be a linear functional supporting $F_r$ on $\Pi_r$. Let $\{y\} = \rho \minus \{x\}$. Then $y \in F_r$ and $x \notin F_r$, so $\phi_r(y) < \phi_r(x)$. By appropriately choosing $\phi_r$, we may assume $\phi_r(y) = 0$ and $\phi_r(x) = 1$ for all $r$. As before, we can then define a linear functional $\phi'$ on $\Pi_1 \cup \dotsb \cup \Pi_N$ such that $\phi'(v) = \phi_r(v)$ for all $v \in \Pi_r$. Then the face of $\Pi_1 \cup \dotsb \cup \Pi_N$ supported by $\phi'$ contains $\sigma \minus \{x\}$, and this face is proper. Thus $\sigma \minus \{x\}$ is contained in a facet of $\Pi_1 \cup \dotsb \cup \Pi_N$.
\\

\emph{Case 3:} $x \in \sigma_s \minus \rho$.

\emph{Subcase 3.1:} $\sigma_s \minus \{x\}$ is not contained in a facet of $\Pi_s$. Then there is some $x' \in \Pi_s$ such that $\sigma'_s := \sigma_s \minus \{x\} \cup \{x'\} \in \ms T_s$. First assume that $x' \notin \rho$. Then
\[
\sigma \minus \{x\} \cup \{x'\} = \left( \bigcup_{r < s} (\sigma_r \minus \rho) \right) \cup \sigma_s' \cup \left( \bigcup_{r > s} \sigma_r \right)
\]
is an element of $\ms M$ containing $\sigma \minus \{x\}$, as desired.

Now assume that $x' \in \rho$. First, suppose that for all $r > s$, $\sigma_r' := \sigma_r \cup \{x'\} \in \ms T_r$. Then
\[
\sigma \minus \{x\} \cup \{x'\} = \left( \bigcup_{r < s} (\sigma_r \minus \rho) \right) \cup (\sigma_s' \minus \rho) \cup \left( \bigcup_{s < r < N} (\sigma_r' \minus \rho) \right) \cup \sigma_N'
\]
is an element of $\ms M$ containing $\sigma \minus \{x\}$, as desired. Now suppose that there is some $t > s$ such that $\sigma_t \cup \{x'\} \notin \ms T_t$, and let $t$ be the smallest such number. Let $x''$ be a point in $\ms T_t$ such that $x'' \notin \sigma_t$ and $\sigma_t'' := \sigma_t \cup \{x''\} \in \ms T_t$. By definition of $t$, $x'' \neq x'$, and hence $x'' \notin \rho$. Then
\[
\sigma \minus \{x\} \cup \{x''\} = \left( \bigcup_{r < s} (\sigma_r \minus \rho) \right) \cup (\sigma_s' \minus \rho) \cup \left( \bigcup_{s < r < t} (\sigma_r' \minus \rho) \right) \cup \sigma_t'' \cup \left( \bigcup_{r > t} \sigma_r \right)
\]
is an element of $\ms M$ containing $\sigma \minus \{x\}$, as desired.

\emph{Subcase 3.2:} $\sigma_s \minus \{x\}$ is contained in a facet $F$ of $\Pi_s$. If $\rho \subseteq F$, then by the same argument as in Subcase 1.2, $\sigma \minus \{x\}$ is contained in a facet of $\Pi_1 \cup \dotsb \cup \Pi_N$. So we may assume $\rho \not\subseteq F$. In particular, this means $\sigma_s \cap \rho = \rho \minus \{x'\}$ for some $x' \in \rho$.

First, suppose that there is some $t < s$ such that $\sigma_t \minus \{x'\}$ is not contained in a facet of $\ms T_t$, and let $t$ be the largest such number. Then there is some $x'' \in \ms T_t$ such that $\sigma_t'' := \sigma_t \minus \{x'\} \cup \{x''\} \in \ms T_t$. Then
\[
\sigma \minus \{x\} \cup \{x''\} = \left( \bigcup_{r < t} (\sigma_r \minus \rho) \right) \cup \sigma_t'' \cup \left( \bigcup_{t < r < s} (\sigma_r \minus \{x'\}) \right) \cup (\sigma_s \minus \{x\}) \cup \left( \bigcup_{r > s} \sigma_r \right)
\]
is an element of $\ms M$ containing $\sigma \minus \{x\}$, as desired.

Now suppose that there is no such $t$. Then for all $r$, we have that $(\sigma \minus \{x\}) \cap \Pi_r$ is contained in a facet $F_r$ of $\Pi_r$. We also have $F_r \cap \rho = \rho \minus \{x'\}$ for all $r$: For $r < s$, this holds because $\sigma_r$ is maximal in $\Pi_r$, for $r = s$, this holds by our original assumption, and for $r > s$, this holds by definition. Hence, by the argument in the second part of Subcase 2.2, $\sigma \minus \{x\}$ is contained in a facet of $\Pi_1 \cup \dotsb \cup \Pi_N$, as desired.
\\

\emph{Case 4:} $x \in \sigma_t \minus \rho$ where $t > s$.

Let $\rho' = \sigma_t \cap \rho$. Let $\ms U \subseteq \ms T_t$ be the simplicial complex consisting of all simplices in $\ms T_t$ on the boundary of $\Pi_t$. Let $\ms U'$ be the subcomplex of $\ms U$ consisting of all simplices in $\ms T_t$ contained in facets $F$ of $\Pi_t$ with $F \cap \rho = \rho'$. In particular, we have $\sigma_t \in \ms U'$.

Since $\ms U$ is homeomorphic to a sphere, there is some $x' \in \Pi_t$ such that $\sigma_t' := \sigma_t \minus \{x\} \cup \{x'\} \in \ms U$. If $\sigma_t' \in \ms U'$, then
\[
\sigma \minus \{x\} \cup \{x'\} = \left( \bigcup_{r < s} (\sigma_r \minus \rho) \right) \cup \sigma_s \cup \sigma_t' \cup \left( \bigcup_{\substack{r > s \\ r \neq t}} \sigma_r \right)
\]
is an element of $\ms M$ containing $\sigma \minus \{x\}$, as desired. Suppose $\sigma_t' \notin \ms U'$. Let $F$ be the facet of $\Pi_t$ containing $\sigma_t'$. Since $\rho' \subseteq \sigma_t'$, but $F \cap \rho \neq \rho'$, we must have $\rho \subseteq F$. By the argument from Subcase 1.2, it follows that $\sigma \minus \{x\}$ is contained in a facet of $\Pi_1 \cup \dotsb \cup \Pi_N$, as desired.
\end{proof}

This proves that $\ms M$ is the set of maximal simplices of a triangulation $\ms T$. Suppose that $\sigma_s$ is a maximal simplex of $\ms T_s$. If $\rho \not\subseteq \sigma_s$, then for all $r \neq s$ we can choose $\sigma_r \in \Pi_r$ such that
\[
\left( \bigcup_{r < s} (\sigma_r \minus \rho) \right) \cup \sigma_s \cup \left( \bigcup_{r > s} \sigma_r \right)
\]
is in $\ms M$. If $\rho \subseteq \sigma_s$, then for all $r \neq s$ we can choose $\sigma_r \in \Pi_r$ such that $\rho \subseteq \sigma_r$ for all $r$ and
\[
\sigma = \left( \bigcup_{r < s} (\sigma_r \minus \rho) \right) \cup (\sigma_s \minus \rho) \cup \left( \bigcup_{s < r < N} (\sigma_r \minus \rho) \right) \cup \sigma_N
\]
is in $\ms M$. Either way, $\sigma_s \in \ms T$. Thus, $\ms T_1$, \dots, $\ms T_N \subseteq \ms T$.

\end{document}